\theoremstyle{plain}
\newtheorem{theorem}{Theorem}[section]
\newtheorem{corollary}[theorem]{Corollary}
\theoremstyle{definition}
\newtheorem{definition}[theorem]{Definition}
\newtheorem{notation}[theorem]{Notation}
\newtheorem{question}[theorem]{Question}
\newtheorem{example}[theorem]{Example}
\theoremstyle{remark}
\newtheorem{remark}[theorem]{Remark}
\numberwithin{equation}{section}
\newcommand{\R}{\mathbb{R}}
\newcommand{\Q}{\mathbb{Q}}
\newcommand{\ccx}{C_c(X)}
\newcommand{\acx}{A_c(X)}
\newcommand{\gnccx}{\Gamma(\ccx) }
\newcommand{\gccx}{\Gamma^{'}_2(\ccx) }
\newcommand{\pccx}{ \mathcal{P}(\ccx) }
\newcommand{\pacx}{ \mathcal{P}(\acx) }
\newcommand{\gnacx}{\Gamma(\acx) }
\newcommand{\vn}{V_0(C_c(X))}
\newcommand{\vtccx}{V_2(\ccx)}
\newcommand{\cx}{C(X)}
\newcommand{\w}{\widehat}
\newcommand{\gncx}{\Gamma(\cx) }
\newcommand{\gcx}{\Gamma^{'}_2(\cx) }
\begin{document}	
	\title[Zero-divisor graph and comaximal graph of $ \ccx $]{Zero-divisor graph and comaximal graph of rings of continuous functions with countable range	}
		\author{Rakesh Bharati}
		\address[Rakesh Bharati]{Department of Pure Mathematics, University of Calcutta, 35, Ballygunge Circular Road, Kolkata - 700019, INDIA} 
	\email{rbharati.rs@gmail.com}
		
	\author{Amrita Acharyya}	\address[Amrita Acharyya]{Department of Mathematics and Statistics, University of Toledo, Main Campus,
		Toledo, OH 43606-3390}\email {amrita.acharyya@utoledo.edu}

		\author{A. Deb Ray }
		\address[A. Deb Ray]{Department of Pure Mathematics, University of Calcutta, 35, Ballygunge Circular Road, Kolkata - 700019, INDIA}
		\email{debrayatasi@gmail.com}
	
		\author{ Sudip Kumar Acharyya}
		\address[Sudip Kumar Acharyya]{Department of Pure Mathematics, University of Calcutta, 35, Ballygunge Circular Road, Kolkata - 700019, INDIA} 
		\email{sdpacharyya@gmail.com}

		\thanks{The first author acknowledges financial support from University Grants Commission, New Delhi, for the award of research fellowship (F.No. 16-9(June 2018)/2019 (NET/CSIR))}

		\keywords{Zero-divisor graph, Comaximal graph, Space of minimal prime ideals, $ P $-space, Quotient graph, Graph isomorphism.}
		\subjclass[2020]{54C30, 54C40, 05C69}
		\begin{abstract}
		
		 In this paper, two outwardly different graphs, namely, the zero divisor graph $\Gamma(C_c(X))$ and the comaximal graph $\Gamma_2^{'}(C_c(X))$ of the ring $C_c(X)$ of all real-valued continuous functions having countable range, defined on any Hausdorff zero dimensional space $X$, are investigated. It is observed that these two graphs exhibit resemblance, so far as the diameters, girths, connectedness, triangulatedness or hypertriangulatedness. are concerned. However, the study reveals that the zero divisor graph $\Gamma(A_c(X))$ of an intermediate ring $A_c(X)$ of $C_c(X)$ is complemented if and only if the space of all minimal prime ideals of $A_c(X)$ is compact. Moreover, $\Gamma(C_c(X))$ is complemented when and only when its subgraph $\Gamma(A_c(X))$ is complemented. On the other hand,  the comaximal graph of $C_c(X)$ is complemented if and only if the comaximal graph of its over-ring $C(X)$ is complemented and the latter graph is known to be complemented if and only if $X$ is a $P$-space. Indeed, for a large class of spaces (i.e., for perfectly normal, strongly zero dimensional spaces which are not P-spaces), $\Gamma(C_c(X))$ and $\Gamma_2^{'}(C_c(X))$ are seen to be non-isomorphic. Defining appropriately the quotient of a graph, it is utilised to establish that for a discrete space $X$, $\Gamma(C_c(X))$ (= $\Gamma(C(X))$) and $\Gamma_2^{'}(C_c(X))$  (= $\Gamma_2^{'}(C(X))$) are isomorphic, if $X$ is atmost countable. Under the assumption of continuum hypothesis, the converse of this result is also shown to be true.

		\end{abstract}
		
	\maketitle
	
	\section{Introduction}
	We start with a Hausdorff zero-dimensional topological space $ X $. Let $ \ccx $ denote the set of all real-valued continuous functions on $ X $ which have countable range. It is well known that $ \ccx $ is a subring as well as a sublattice of the familiar ring $ C(X) $ of all real-valued continuous functions on $ X $. People have started investigating the algebraic and lattice properties of the rings $ \ccx $ and $ C_c^*(X)=\ccx\cap C^*(X) $ vis-a-vis corresponding topological properties of $ X $ only recently. We refer the articles \cite{ref1}, \cite{ref2}, \cite{ref3}, \cite{ref10} in this context. It is rightly remarked in paragraph 5 of the introductory section of the article \cite{ref10} that $ \ccx $ proves to be a good companion for $ C(X) $ in pegging some topological properties of $ X $ to appropriate algebraic properties of $ C(X) $ or to that of $ \ccx $. Our intention in this article is to study some relevant properties of each of the two graphs viz. the zero-divisor graph $ \gnccx $ and the comaximal graph $ \gccx $ of the ring $ \ccx $. We would like to mention that in the literature there is only one paper on the zero-divisor graph $ \Gamma(C(X)) $ of the ring $ C(X) $ \cite{ref4} and also a solitary article concerning the comaximal graph $ \Gamma_2^{'}(C(X)) $ of $ C(X) $ \cite{ref6}. It is realized that there is an interplay between the graph properties of $ \gnccx $ (respectively $ \gccx $) and the ring properties of $ \ccx $, leading to further interaction between these two properties and the topological properties of $ X $. See Theorem \ref{t-2.8}, Theorem \ref{t-3.13}, Theorem \ref{t-4.5} and Theorem \ref{t-4.12} in this connection. These may be called the countable counterparts of the corresponding theorems in \cite{ref4} and in \cite{ref6}. However in order to establish a number of properties in the present paper, we take to our advantage the presence of characteristic functions of clopen sets in $ X $, which exist in abundance, thanks to the zero-dimensionality of $ X $. This has simplified the proof of these results to some extent.
	
 The vertices of the zero-divisor graph $ \gnccx $ are the nonzero zero divisors in the ring $ \ccx $ and two vertices $ f $ and $ g $ are called adjacent, in which case these are connected by an edge if and only if $ fg=0 $. On the other hand the vertices of the comaximal graph $ \gccx $ are the nonzero non-units in $ \ccx $ and two such vertices $ f $ and $ g $ are said to be adjacent if and only if no maximal ideal in $ \ccx $ contains both $ f $ and $ g $. we divide this article into two distinct parts, where the technical sections 2, 3 comprise several facts focussing on the graph $ \gnccx $. The subsequent technical section 4, deals with propositions involving the graph $ \gccx $. The technical section 5 of this paper focusses mainly on the question: when do the two graphs $ \gnccx $ and $ \gccx $ become isomorphic. This is high time we should narrate the organization of our article.
	
	In section 2, we compute the distances between all possible pairs of vertices in $ \gnccx $. It is seen that with the hypothesis $ |X|\geq 3 $, the diameter and girth of $ \gnccx $ are both 3 (Theorem \ref{t-2.5}). It is realized that $ \gnccx $ is triangulated if and only if $ X $ is devoid of any isolated point (Theorem \ref{t-2.8}). It is further realized that $ \gnccx $ is never hypertriangulated (Theorem \ref{t-2.10}). It is also seen that the length of the smallest cycle joining an arbitrary pair of vertices in this graph can be either 3 or 4 or 6 (Theorem \ref{t-2.11}).
	
	In section 3, the problem when does $ \gnccx $ become complemented is mainly addressed. It turns out $ \gnccx $ becomes a complemented graph when and only when the space $ \pccx $ of all minimal prime ideals in $ \ccx $ with Zariski topology is compact (Theorem \ref{t-3.13}). For a typical intermediate ring $ A_c(X) $ lying between $ C_c^*(X) $ and $ \ccx $, its zero-divisor graph $ \gnacx $ is complemented if and only if $ \gnccx $ is complemented (Theorem \ref{t-3.16}). If we combine these two facts, then we get that $ \gnacx $ is complemented if and only if the space $ \pacx $ of all minimal prime ideals in $ \acx $ with Zeriski topology is compact (Theorem \ref{t-3.19}). With the additional hypothesis that $ X $ is strongly zero-dimensional, we realize that the space $ \pccx $ is compact if and only if the space $ \mathcal{P}(C(X)) $ is compact (Theorem \ref{t-3.21}). This is an instance of how tools and results in graph theory can lead to a purely topological result.
	
	In section 4, we take to the technicalities of the comaximal graph $ \gccx $ of the ring $ \ccx $. We compute the girth, diameter, eccentricity of vertices and the length of the possible smallest cycles $ c(f,g) $ joining two vertices $ f $ and $ g $ of this graph. It is proved that $ \gccx $ is triangulated if and only if $ \Gamma_2^{'}(C(X)) $ (the comaximal graph of $ C(X) $) is triangulated and this happens when and only when $ X $ has no isolated point (Remark \ref{r-4.6}). We realize that $\gccx $ is never hypertriangulated. We further examine, the effect on some relevant properties of the graph $ \gccx $ by imposing the additional hypothesis: $ X $ is a $ P $-space/ $ X $ is an almost $ P $-space. We prove that $ \gccx $ is complemented if and only if $ X $ is a $ P $-space (Theorem \ref{t-4.10}). We realize that Rad$ \gccx\equiv$ radius of $ \gccx =3 $ if and only if $ X $ is an almost $ P $-space and does not have any isolated point (Theorem \ref{t-4.2}). We like to mention that these are the countable counterparts of the corresponding facts in the comaximal graph $ \Gamma_2^{'}(C(X)) $ of the ring $ C(X) $ as addressed in the article \cite{ref6}.	For more information on the ring $ \ccx $ and $ C_c^*(X) $, the articles \cite{ref1}, \cite{ref2} , \cite{ref3}, \cite{ref10} are referred.
	
	
	In the last section viz. section 5 of this article, we concentrate mainly on the problem, how far the zero-divisor graph $ \gnccx $ and the comaximal graph $ \gccx $ can be different/ same as graphs. A complete solution to this problem seems to be too wild to venture into. However, we have provided with a large class of zero-dimensional spaces $ X $ for which $ \gccx $ and $ \gnccx $ are non isomorphic (Theorem \ref{to-5.1}). To get more examples of spaces $ X $, having such properties, we initiate a type of quotient graph $ \w{G} $ of any simple graph $ G $. Essentially vertices of $ \w{G} $ are constructed after gluing vertices of $ G $ by some natural rule and then defining the adjacency relation on $ \w{G} $ accordingly. It turns out that if $ X $ is a $ P $-space then the quotient graph of the zero-divisor graph $ \gncx $ of $ C(X) $ is isomorphic to the quotient graph of the comaximal graph $ \gcx $ of $ C(X) $ (Remark \ref{ro-5.7}). We exploit this result to prove with the support of the continuum hypothesis, that a discrete topological space $ X $ is countable if and only if $ \gncx $ is isomorphic to $ \gcx $ if and only if $ \gnccx $ is isomorphic to $ \gccx $ (Theorem \ref{to-5.15}).

	\section{Technical notions related to the graph $ \gnccx $}
For any two vertices $ f,\ g $ in $ \gnccx $, the length of the shortest path containing $ f $ and $ g $ is denoted by $ d(f,g) $. Also $ C(f,g) $ designates 	the length of the smallest cycle joining $ f $ and $ g $. We let $ V_0(C_c(X)) $ stand for the set of vertices in the present zero-divisor graph of $ \ccx $. The number $\sup\{d(f,g): f,g\in \vn\}$ is called the diameter of $ \gnccx $, and for any $ f\in \vn $, $\sup\{d(f,g): g\in \vn\}$ is called the eccentricity of $ f $, and is denoted by $ e(f) $. A vertex with largest eccentricity is called center of the graph and eccentricity of any vertex lying on the center is known as the radius of the original graph. For any $ f\in \ccx $, $ Z(f)=\{x\in X: f(x)=0\} $ is called the zero-set of $ f $. It is proved in Sublemma 1.1 in \cite{ref4} that for a pair of vertices $ f,\ g $ in the zero-divisor graph $ \Gamma(C(X)) $ of $ C(X) $, there exists a vertex $ h $ in the same graph adjacent to both $ f $ and $ g $ if and only if int$Z(f)\ \cap $ int$Z(g)\neq\emptyset $. The following result tells that the countable counterpart of this important fact is true. 
\begin{theorem}\label{t-2.1}
	Given $ f,g\in \vn $, there exists $ h\in\vn $ adjacent to both $ f $ and $ g $ if and only if int$Z(f)\ \cap $ int$Z(g)\neq\emptyset $.
\end{theorem}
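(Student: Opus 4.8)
The statement is an "if and only if" about existence of a common neighbor in the zero-divisor graph $\gnccx$, phrased in terms of interiors of zero-sets. The natural strategy is to prove the two directions separately, exploiting the zero-dimensionality of $X$ (so that clopen sets, hence their characteristic functions, are abundant in $\ccx$). For the easy direction, suppose $h\in\vn$ is adjacent to both $f$ and $g$, so $fh=0$ and $gh=0$. Then $Z(h)\supseteq X\setminus Z(f)$ and $Z(h)\supseteq X\setminus Z(g)$; since $h\neq 0$, the open set $X\setminus Z(h)$ is nonempty, and it is contained in $\operatorname{int}Z(f)\cap\operatorname{int}Z(g)$ (because $X\setminus Z(h)\subseteq Z(f)$ is an open subset of $Z(f)$, hence lies in $\operatorname{int}Z(f)$, and similarly for $g$). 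Thus $\operatorname{int}Z(f)\cap\operatorname{int}Z(g)\neq\emptyset$.

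For the converse, assume there is a point $x_0\in\operatorname{int}Z(f)\cap\operatorname{int}Z(g)=:U$, a nonempty open set. Here is where zero-dimensionality enters: choose a nonempty clopen set $V$ with $V\subseteq U$ (possible since the clopen sets form a base). Let $h=\chi_V$, the characteristic function of $V$, which is continuous (as $V$ is clopen), has range $\subseteq\{0,1\}$ (so $h\in\ccx$), and is a nonzero non-unit since $\emptyset\neq V\neq X$ — wait, one must be a little careful if $V=X$, but $V\subseteq U\subseteq Z(f)$ and $f$ is a zero-divisor hence nonzero, so $V\neq X$; also $h\neq 0$ since $V\neq\emptyset$. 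Then $h$ is itself a zero-divisor (e.g. $h\cdot\chi_{X\setminus V}=0$ with $\chi_{X\setminus V}\neq 0$), so $h\in\vn$. Finally $fh=0$ because $V\subseteq Z(f)$ forces $f$ and $h$ to have disjoint supports, and likewise $gh=0$; hence $h$ is adjacent to both $f$ and $g$.

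**Main obstacle.** There is no deep obstacle here — the result is essentially the zero-dimensional analogue of Sublemma 1.1 of \cite{ref4}, and the presence of characteristic functions of clopen sets makes the construction of the common neighbor $h$ immediate, bypassing the Urysohn-type argument needed for $C(X)$. The only points requiring care are the bookkeeping checks that the constructed $h$ genuinely lies in $\vn$ (i.e. is a \emph{nonzero zero-divisor}, not merely a zero-divisor), which rely on $V$ being a nonempty \emph{proper} clopen subset of $X$, and on noting that if $\operatorname{int}Z(f)$ or $\operatorname{int}Z(g)$ contained no nonempty clopen set the argument would fail — but it cannot, by zero-dimensionality. I would also double-check the degenerate interplay with the hypothesis that $f,g$ are themselves vertices (hence nonzero zero-divisors), which guarantees $Z(f),Z(g)\neq X$ and keeps everything proper.
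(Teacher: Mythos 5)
Your proof is correct and follows essentially the same route as the paper: the converse is identical (pick a nonempty clopen $V$ inside $\operatorname{int}Z(f)\cap\operatorname{int}Z(g)$ and take $h=\chi_V$, checking it is a nonzero zero-divisor). The only difference is that for the forward direction the paper simply cites Sublemma 1.1 of \cite{ref4} applied to $\Gamma(C(X))$, whereas you prove it directly via $\emptyset\neq X\setminus Z(h)\subseteq \operatorname{int}Z(f)\cap\operatorname{int}Z(g)$ — a valid and self-contained substitute.
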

	\begin{proof}
	First let there exist $ h\in \vn $ adjacent to both of $ f $ and $ g $ in $ \gnccx $. Then $ h $ is adjacent to each of $ f $ and $ g $ in $ \Gamma(C(X)) $. It follows from sublemma 1.1 in \cite{ref4} that int$Z(f)\ \cap $ int$Z(g)\neq\emptyset $.
	
	Conversely, let int$Z(f)\ \cap $ int$Z(g)\neq\emptyset $. Choose a point $ x\in $ int$Z(f)\ \cap $ int$Z(g)  $. Since $ X $ is zero-dimensional, there exists a clopen set $ W $ in $ X $ such that $ x\in W\subseteq $ int$Z(f)\ \cap $ int$Z(g) $. The characteristic function $ 1_W: X\rightarrow \R $ defined by $ 1_W(y) =1$ if $ y\in W $ and $ 1_W(y) =0$ if $ y\in X\setminus W $ is surely a function in $ \ccx $. We observe that $ 1_W.f=0=1_W.g $. Then $ 1_W\in \vn $ is adjacent to both of $ f $ and $ g$.
	\end{proof}
\begin{theorem}\label{t-2.2}
	The graph $ \gnccx $ is connected meaning that any pair of distinct vertices in this graph can be joined by a path.
\end{theorem}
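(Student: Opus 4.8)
The plan is to prove the stronger quantitative statement that any two distinct vertices of $\gnccx$ are joined by a path of length at most $3$; connectedness follows at once. The two ingredients are Theorem \ref{t-2.1} and the abundance of clopen subsets of $X$, whose characteristic functions live in $\ccx$.

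First I would record the working description of a vertex: $f\in\vn$ if and only if $f\neq 0$ and int$Z(f)\neq\emptyset$. Indeed, if $fg=0$ for some nonzero $g\in\ccx$ then the nonempty open set $X\setminus Z(g)$ is contained in $Z(f)$, so int$Z(f)\neq\emptyset$; conversely, if int$Z(f)\neq\emptyset$ then zero-dimensionality yields a nonempty clopen $W\subseteq Z(f)$, and then $1_W\in\ccx$ is a nonzero element with $1_W\cdot f=0$, so $f$ (being nonzero) is a zero divisor. In particular each $1_W$ with $\emptyset\neq W\subsetneq X$ is itself a vertex, since $1_W\cdot 1_{X\setminus W}=0$ with $1_{X\setminus W}\neq 0$.

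Now take distinct $f,g\in\vn$. If $fg=0$ they are adjacent and we are done. If $fg\neq 0$ but int$Z(f)\cap$int$Z(g)\neq\emptyset$, then Theorem \ref{t-2.1} supplies $h\in\vn$ adjacent to both $f$ and $g$; here $hf=0\neq fg=gf$ forces $h\neq g$, and symmetrically $h\neq f$, so $f\text{--}h\text{--}g$ is a path of length $2$. In the remaining case $fg\neq 0$ and int$Z(f)\cap$int$Z(g)=\emptyset$, I would pick nonempty clopen sets $W_f\subseteq$int$Z(f)$ and $W_g\subseteq$int$Z(g)$. These are disjoint, hence $1_{W_f}\cdot 1_{W_g}=0$, while $f\cdot 1_{W_f}=0=g\cdot 1_{W_g}$ since $W_f\subseteq Z(f)$ and $W_g\subseteq Z(g)$. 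Thus $f\text{--}1_{W_f}\text{--}1_{W_g}\text{--}g$ is a walk of length $3$ from $f$ to $g$.

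The only bookkeeping is to verify that $f,1_{W_f},1_{W_g},g$ are pairwise distinct, so the walk is an honest path. Each possible coincidence is excluded by the standing hypothesis $fg\neq 0$ together with the inclusions $W_f\subseteq Z(f)$, $W_g\subseteq Z(g)$: for instance $1_{W_f}=g$ would give $fg=f\cdot 1_{W_f}=0$, and $1_{W_f}=f$ would force $W_f\subseteq Z(f)=X\setminus W_f$, i.e. $W_f=\emptyset$; the equalities involving $W_g$ are symmetric, and $1_{W_f}=1_{W_g}$ is impossible because $W_f\cap W_g=\emptyset$ with both sets nonempty. I expect no genuine obstacle here: this is essentially the classical ``diameter $\le 3$'' argument for zero-divisor graphs, made transparent in $\ccx$ by replacing idempotent-type witnesses with characteristic functions of clopen sets.
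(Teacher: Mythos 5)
Your proof is correct and follows essentially the same route as the paper: split into the cases ``adjacent'', ``common neighbour exists'' (via Theorem \ref{t-2.1}), and otherwise build a length-$3$ path $f\sim h\sim k\sim g$ through two mutually annihilating intermediaries. The only differences are cosmetic --- the paper takes arbitrary annihilators $h,k$ of $f,g$ and deduces $hk=0$ from int$Z(f)\cap{}$int$Z(g)=\emptyset$, whereas you construct them explicitly as characteristic functions of disjoint clopen sets and, commendably, also verify that the four vertices are pairwise distinct, a point the paper glosses over.
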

	\begin{proof}
		Let $ f,g\in \vn $. Then there exist $ h,k\in \vn $ such that $ fh=gk=0$. If there exists $ h\in\vn $ adjacent to both $ f $ and $ g $, then the proof finishes thereon. Assume therefore that no vertex in $ \vn $ is adjacent to both $ f $ and $ g $. It follows from Theorem \ref{t-2.1} that int$Z(f)\ \cap $  int$Z(g)=\emptyset $. Now $ fh=gk=0 $ implies that $ (X\setminus Z(h))\cap (X\setminus Z(k)) \subseteq$ int$Z(f)\ \cap $  int$Z(g)$ and therefore $ (X\setminus Z(h))\cap (X\setminus Z(k))=\emptyset $. This implies that $ hk=0 $. Thus $f\sim h\sim k\sim g$ represents a path joinoing $ f $ and $ g $ in $ \gnccx $. Here the notation $f\sim h  $ stands for: $ f $ is adjacent to $ h $.
	\end{proof}
	\begin{remark}\label{r-2.3}
		The diameter of $ \gnccx \leq 3$.
	\end{remark}
	It is trivial that gr$ \gnccx \equiv$ the girth of $ \ccx \geq3$. We shall show that with the  hypothesis $ |X|\geq3 $, both the diameter and girth of $ \gnccx $ are exactly equal to 3. We need the following proposition for this purpose.
	\begin{theorem}\label{t-2.4}
		Let $ f,g\in \vn $. Then\begin{enumerate}
			\item $ d(f,g)= 1$ if and only if $ Z(f)\cup Z(g)=X $.
			\item $ d(f,g)= 2$ if and only if $ Z(f)\cup Z(g)\neq X $ and int$Z(f)\ \cap $  int$Z(g)\neq \emptyset$.
			\item $ d(f,g)= 3$ if and only if $ Z(f)\cup Z(g)\neq X $ and int$Z(f)\ \cap $  int$Z(g)= \emptyset$.
		\end{enumerate}
	\end{theorem}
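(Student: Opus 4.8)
The plan is to dispatch the three parts in sequence, each reducing to something already in hand; throughout we may assume $f\neq g$, as is customary when discussing distances in a graph.

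For part (1), note that $d(f,g)=1$ means precisely that $f$ and $g$ are adjacent, i.e. $fg=0$ in $\ccx$. Since $(fg)(x)=f(x)g(x)$ for every $x\in X$, the function $fg$ vanishes identically exactly when each $x$ lies in $Z(f)$ or in $Z(g)$, that is, when $Z(f)\cup Z(g)=X$. This settles (1). For part (2), observe that $d(f,g)=2$ says that $f,g$ are non-adjacent yet possess a common neighbour $h$. By part (1), non-adjacency is the condition $Z(f)\cup Z(g)\neq X$, and by Theorem \ref{t-2.1} the existence of a common neighbour is equivalent to $\mathrm{int}Z(f)\cap\mathrm{int}Z(g)\neq\emptyset$. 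One checks that the $h$ furnished by Theorem \ref{t-2.1} is automatically distinct from $f$ and from $g$: a vertex adjacent to itself would satisfy $h^2=0$, forcing $h=0$, which is impossible in $\ccx$. Hence when both set-theoretic conditions hold, $f\sim h\sim g$ is a genuine path of length $2$, so $d(f,g)=2$; the converse is immediate. This gives (2).

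For part (3), I would invoke Theorem \ref{t-2.2} (connectedness) and Remark \ref{r-2.3} (diameter $\le 3$), so that $1\le d(f,g)\le 3$ always. If $Z(f)\cup Z(g)\neq X$ then $d(f,g)\neq 1$ by (1); if in addition $\mathrm{int}Z(f)\cap\mathrm{int}Z(g)=\emptyset$ then $d(f,g)\neq 2$ by (2); therefore $d(f,g)=3$. Conversely, if $d(f,g)=3$ then $d(f,g)\neq 1$ gives $Z(f)\cup Z(g)\neq X$, while $d(f,g)\neq 2$ combined with (2) shows that the conjunction ``$Z(f)\cup Z(g)\neq X$ and $\mathrm{int}Z(f)\cap\mathrm{int}Z(g)\neq\emptyset$'' fails; since its first half holds, the second half must fail, i.e. $\mathrm{int}Z(f)\cap\mathrm{int}Z(g)=\emptyset$.

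I do not anticipate a genuine obstacle here: the substantive content has already been front-loaded into Theorem \ref{t-2.1} and Theorem \ref{t-2.2}. The only points requiring a moment's care are the verification in part (2) that the common neighbour produced by Theorem \ref{t-2.1} is a new vertex (so that the resulting path has length exactly $2$, not $0$ or $1$), and the purely logical manipulation in part (3) of negating the criterion established in part (2).
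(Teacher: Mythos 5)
Your proposal is correct and follows essentially the same route as the paper: part (1) by unwinding adjacency, part (2) from part (1) together with Theorem \ref{t-2.1}, and part (3) by elimination using the connectedness of the graph (Theorem \ref{t-2.2}) and the diameter bound (Remark \ref{r-2.3}). The only difference is that you spell out the small verification that the common neighbour supplied by Theorem \ref{t-2.1} is distinct from $f$ and $g$, which the paper leaves implicit.
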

	\begin{proof}
		(1) is trivial.
		
		 (2) follows from (1) of the present theorem and Theorem \ref{t-2.1}.
		 
		(3). It follows from (1) and (2) of the present theorem that if $ d(f,g)= 3$ then $ Z(f)\cup Z(g)\neq X $ and int$Z(f)\ \cap $  int$Z(g)= \emptyset$. Conversely, let $ Z(f)\cup Z(g)\neq X $ and int$Z(f)\ \cap $  int$Z(g)= \emptyset$. Then from (1) and (2) of the present theorem $ d(f,g)\neq1 $ and $ d(f,g)\neq2 $. Hence in view of Theorem \ref{t-2.2} and Remark \ref{r-2.3}, we get that $ d(f,g)=3 $.
	\end{proof}
	\begin{theorem}\label{t-2.5}
		Suppose $ |X|\geq3 $. The diameter of $ \gnccx =$ girth of $ \gnccx =3$.
	\end{theorem}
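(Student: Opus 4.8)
The plan is to treat the diameter and the girth separately, in both cases reducing everything to a single combinatorial input: a partition of $X$ into three nonempty clopen pieces, which is available precisely because $|X|\geq 3$ and $X$ is Hausdorff and zero-dimensional. First I would fix three distinct points $x_1,x_2,x_3\in X$ and, using zero-dimensionality, choose a clopen set $A_1$ with $x_1\in A_1$ and $x_2,x_3\notin A_1$, then a clopen set $B$ with $x_2\in B$ and $x_1,x_3\notin B$, and set $A_2=B\setminus A_1$ (still clopen, still containing $x_2$, and disjoint from $A_1$) and $A_3=X\setminus(A_1\cup A_2)$. This yields $X=A_1\sqcup A_2\sqcup A_3$ with each $A_i$ clopen and nonempty, since $x_i\in A_i$.

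For the diameter: Remark \ref{r-2.3} already gives that the diameter of $\gnccx$ is $\leq 3$, so it is enough to exhibit a pair of vertices at distance exactly $3$. I would take $f=1_{A_1\cup A_3}$ and $g=1_{A_2\cup A_3}$. Both are characteristic functions of clopen sets, hence lie in $\ccx$; both are nonzero; and each is a zero-divisor since $f\cdot 1_{A_2}=0$ and $g\cdot 1_{A_1}=0$ with $1_{A_1},1_{A_2}\neq 0$. Thus $f,g\in\vn$. Now $Z(f)=A_2$ and $Z(g)=A_1$ are clopen, so $\mathrm{int}Z(f)\cap\mathrm{int}Z(g)=A_2\cap A_1=\emptyset$, while $Z(f)\cup Z(g)=A_1\cup A_2=X\setminus A_3\neq X$ because $A_3\neq\emptyset$. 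By Theorem \ref{t-2.4}(3), $d(f,g)=3$, so the diameter of $\gnccx$ equals $3$.

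For the girth: since $\gnccx$ is a simple graph containing at least one edge, its girth is $\geq 3$, as already noted in the text; so it suffices to produce a triangle. I would take $f=1_{A_1}$, $g=1_{A_2}$, $h=1_{A_3}$. These are three pairwise distinct nonzero elements of $\ccx$ (distinct because $A_1,A_2,A_3$ are distinct nonempty sets), each a zero-divisor (for instance $1_{A_1}\cdot 1_{A_2}=0$), hence vertices; and $fg=gh=hf=0$ because the $A_i$ are pairwise disjoint. Thus $f,g,h$ form a $3$-cycle and the girth of $\gnccx$ equals $3$, finishing the proof.

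I do not expect a genuine obstacle: once the clopen partition is in hand, everything is a direct application of Theorem \ref{t-2.4} (for the distance-$3$ pair) together with Remark \ref{r-2.3} and the trivial lower bound on the girth. The only points needing a little care are checking that the characteristic functions written down are actually vertices (i.e.\ nonzero zero-divisors) and that the three functions used for the triangle are pairwise distinct, both of which follow at once from the nonemptiness and pairwise disjointness of $A_1,A_2,A_3$.
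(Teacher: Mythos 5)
Your proof is correct and follows essentially the same route as the paper: both arguments use zero-dimensionality to manufacture characteristic functions of clopen sets, invoke Theorem \ref{t-2.4}(3) together with Remark \ref{r-2.3} to get a pair at distance exactly $3$, and exhibit an explicit triangle for the girth. Your three-piece clopen partition streamlines the girth step slightly, since $1_{A_1},1_{A_2},1_{A_3}$ form the triangle directly, whereas the paper must produce the third vertex of its triangle as a nonzero annihilator of $1_{X\setminus L}^2+1_{X\setminus K}^2$.
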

\begin{proof}
	Let $ p,q,r $ be three distinct points in $ X $. Then by using zero-dimensionality of $ X $, we can find out a clopen set $ K $ in $ X $ such that $ \{q,r\}\subseteq K $ and $ p\notin K $. Therefore $ q\notin(X\setminus K)\cup \{r\} $. Using the zero-dimensionality of $ X $ once again, we can produce a clopen set $ L $ in $ X $ such that $ q\notin L $ and $ (X\setminus K)\cup \{r\}\subseteq L $. It is clear that the characteristic functions $ 1_K $ and $ 1_L $ belong to $ \ccx $ and $ Z(1_K)\cup Z(1_L)\neq X $ because $ r\in L\cap K $. On the other hand, $ Z(1_L)=X\setminus L\subseteq K $ and $ Z(1_K)=X\setminus K $ which imply that int$_X Z(1_L)\ \cap $  int$Z(1_K)= \emptyset$. It follows from Theorem \ref{t-2.4}(2) that $ d(1_L, 1_K)=3 $. This implies in view of Remark \ref{r-2.3} that the diameter of $ \gnccx=3 $. Now $ X\setminus L\subseteq K $ implies that $ 1_{X\setminus L}. 1_{X\setminus K}=0 $. We write $ f=1_{X\setminus L}^2+1_{X\setminus K}^2 $. Then $ f $ takes values $ 2 $  on $ (X\setminus L)\cup (X\setminus K) $ and $ 0 $ on $ K\cap L $, a non-empty clopen set in $ X $. It follows that $ f $ is a nonzero divisor of zero in $ \ccx $, i.e., $ f\in \vn $. Therefore there exists $ g\in \vn $ such that $ fg=0 $ and we can take $ g\geq 0 $ on $ X $. Consequently $ g.1_{X\setminus
	 L}=0=g.1_{X\setminus K}=1_{X\setminus
	 L}.1_{X\setminus K} $. Thus $1_{X\setminus
	 L},\ 1_{X\setminus K} $ and $ g $ form the vertices of a triangle. Hence gr$\gnccx =3 $.
\end{proof}
	The following formula is the countable counterpart of Remark 1.5 in \cite{ref4}.
	\begin{theorem}\label{t-2.6}
		Let $ f\in \vn $, then $ e(f)=2 $ if $ X\setminus Z(f) $ is a one member set, otherwise $e(f)=3$.
	\end{theorem}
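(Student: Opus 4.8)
The plan is to combine the distance characterization of Theorem~\ref{t-2.4} with the bound $e(f)\le 3$, which is immediate from Remark~\ref{r-2.3} since an eccentricity never exceeds the diameter. I will also use two elementary observations about a vertex $f\in\vn$: first, $f\neq 0$ gives $X\setminus Z(f)\neq\emptyset$; second, $\mathrm{int}\,Z(f)\neq\emptyset$, because if $fh=0$ with $0\neq h\in\ccx$ then the nonempty open set $X\setminus Z(h)$ lies inside $Z(f)$. The argument then splits according to whether $X\setminus Z(f)$ is a singleton.

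Case $X\setminus Z(f)=\{x_0\}$. Here $\{x_0\}$ is closed by Hausdorffness, so $Z(f)=X\setminus\{x_0\}$ is open and $\mathrm{int}\,Z(f)=X\setminus\{x_0\}$. I would first rule out $d(f,g)=3$ for every vertex $g$: by Theorem~\ref{t-2.4}(3) such a $g$ would satisfy $\mathrm{int}\,Z(f)\cap\mathrm{int}\,Z(g)=\emptyset$, hence $\mathrm{int}\,Z(g)\subseteq\{x_0\}$, and since $\mathrm{int}\,Z(g)\neq\emptyset$ this forces $\mathrm{int}\,Z(g)=\{x_0\}$, so $x_0\in Z(g)$; but the same part of Theorem~\ref{t-2.4} gives $Z(f)\cup Z(g)\neq X$, i.e.\ $x_0\notin Z(g)$, a contradiction. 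Thus $e(f)\le 2$. To see $e(f)\ge 2$, take $g=2f$: it is a vertex distinct from $f$, it is not adjacent to $f$ (since $2f^2\neq 0$), and it has $Z(f)\cup Z(g)=Z(f)\neq X$ together with $\mathrm{int}\,Z(f)\cap\mathrm{int}\,Z(g)=\mathrm{int}\,Z(f)\neq\emptyset$, so $d(f,g)=2$ by Theorem~\ref{t-2.4}(2). Hence $e(f)=2$.

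Case $|X\setminus Z(f)|\ge 2$. It suffices to exhibit a vertex $g$ with $d(f,g)=3$, since $e(f)\le 3$. Pick distinct points $x_1,x_2\in X\setminus Z(f)$. As $Z(f)$ is closed, $\overline{\mathrm{int}\,Z(f)}\subseteq Z(f)$, so $x_2\notin\overline{\mathrm{int}\,Z(f)}$; since $X$ is $T_1$, the set $F:=\overline{\mathrm{int}\,Z(f)}\cup\{x_1\}$ is closed and does not contain $x_2$. Using that the clopen sets form a base of $X$, choose a clopen $B$ with $x_2\in B\subseteq X\setminus F$ and set $W:=X\setminus B$, a clopen set with $\mathrm{int}\,Z(f)\cup\{x_1\}\subseteq W$ and $x_2\notin W$. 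Then $g:=1_W\in\ccx$ has $Z(g)=B$ clopen and nonempty (it contains $x_2$), so $g$ is a nonzero zero-divisor, i.e.\ $g\in\vn$; moreover $\mathrm{int}\,Z(f)\cap\mathrm{int}\,Z(g)=\mathrm{int}\,Z(f)\cap B=\emptyset$, while $x_1\notin Z(f)\cup B=Z(f)\cup Z(g)$, so $Z(f)\cup Z(g)\neq X$. Theorem~\ref{t-2.4}(3) then gives $d(f,g)=3$, and together with $e(f)\le 3$ this yields $e(f)=3$.

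The only genuinely non-routine step is the construction of the clopen set $W$ in the second case; the crucial point that makes it work is that a point of $X\setminus Z(f)$ necessarily lies outside $\overline{\mathrm{int}\,Z(f)}$, so zero-dimensionality lets us simultaneously absorb $\mathrm{int}\,Z(f)$ and one such point into a clopen set while keeping a second such point on the outside.
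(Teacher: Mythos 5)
Your proposal is correct and follows essentially the same route as the paper: split on whether $X\setminus Z(f)$ is a singleton, use Theorem~\ref{t-2.4} to pin down the distances, and in the non-singleton case use zero-dimensionality to separate one point of $X\setminus Z(f)$ from a closed set containing $\mathrm{int}\,Z(f)$ and the other point, producing a characteristic-function vertex at distance~$3$. The only differences are cosmetic (you argue Case~1 by contradiction via Theorem~\ref{t-2.4}(3) and realize distance~$2$ with $g=2f$, where the paper argues directly and uses $g=1_{\{x\}}$), so nothing further is needed.
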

	\begin{proof}
		First consider the case $ X\setminus Z(f)=\{x\} $. Then the characteristic function $ 1_{\{x\}} $ and $ 1_{X\setminus\{x\}} $ are vertices in $ \gnccx $. Choose $ g\in \vn $, $ g\neq f $. If $ x\in Z(g) $ then $ fg=0 $ which implies that $ d(f,g)=1 $. On the other hand, if $ g $ does not vanish at $ x $ (say $ g=1_{\{x\}} $), then $ Z(g)\subseteq Z(f) $. Consequently, $ Z(f)\cup Z(g)=Z(f)\neq X $ and int$Z(g)\ \cap  $  int$ Z(f)=  $ int$Z(f)\neq \emptyset $. It follows from Theorem \ref{t-2.4}(2) that $ d(f,g)=2 $. Thus $ e(f)=2 $ in this case.
	
	Now consider the other case $ \{x,y\}\subseteq X\setminus Z(f) $, where, $ x\neq y $, $ x,y\in X $. Then by the zero-dimensionality of $ X $, there exists a clopen set $ K $ in $ X $ such that $ K\cap (Z(f)\cup \{y\})=\emptyset $. The function $ 1_{X\setminus K}\in \vn $. We observe that $ Z(f)\cup Z(1_{X\setminus K})\subsetneq X $ and int$ Z(f)\ \cap $ int$ Z(1_{X\setminus K})=\emptyset $. It follows from Theorem \ref{t-2.4}(3) that $ d(f, 1_{X\setminus K})=3 $. Hence $ e(f)=3 $.
\end{proof}	
The following proposition will be very close to determining a topologlical condition on $ X $, equivalent to the graph $ \gnccx $ to be triangulated.
\begin{theorem}\label{t-2.7}
		Let $ f\in\vn $. Then $ f $ is a vertex of a triangle in $ \gnccx $ if and only if int$ Z(f) $ contains at least two distinct points.
\end{theorem}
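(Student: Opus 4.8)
The plan is to prove both implications directly, using characteristic functions of clopen sets exactly as in Theorem \ref{t-2.1}, and the fact that $\ccx$, being a subring of $C(X)$, is reduced (so $u^2=0$ forces $u=0$, whence any two adjacent vertices of $\gnccx$ are automatically distinct).

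For the ``only if'' direction, suppose $f,g,h$ are the vertices of a triangle in $\gnccx$, so that $fg=fh=gh=0$ with $g,h$ nonzero. From $fg=0$ we get $X\setminus Z(g)\subseteq Z(f)$; since $Z(g)$ is closed, $X\setminus Z(g)$ is a nonempty open set, hence $X\setminus Z(g)\subseteq$ int$Z(f)$. Likewise $X\setminus Z(h)$ is a nonempty open subset of int$Z(f)$. From $gh=0$ we get $X\setminus Z(g)\subseteq Z(h)$, so $(X\setminus Z(g))\cap(X\setminus Z(h))=\emptyset$. Therefore int$Z(f)$ contains the two disjoint nonempty sets $X\setminus Z(g)$ and $X\setminus Z(h)$, and in particular contains at least two distinct points.

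For the ``if'' direction, assume $x,y\in$ int$Z(f)$ with $x\neq y$. By zero-dimensionality pick a clopen $U$ with $x\in U\subseteq$ int$Z(f)$, and by the Hausdorff property together with zero-dimensionality pick a clopen $W_0$ with $x\in W_0$ and $y\notin W_0$; set $W=U\cap W_0$, a clopen set with $x\in W\subseteq$ int$Z(f)$ and $y\notin W$. Since int$Z(f)\setminus W=$ int$Z(f)\cap(X\setminus W)$ is open and contains $y$, choose a clopen $V$ with $y\in V\subseteq$ int$Z(f)\setminus W$. Then $1_W,1_V\in\ccx$, and $1_W\cdot f=1_V\cdot f=1_W\cdot 1_V=0$ because $W,V\subseteq Z(f)$ and $W\cap V=\emptyset$. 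As $f\neq 0$ and $1_W,1_V$ are nonzero, each of $f$, $1_W$, $1_V$ is a nonzero zero-divisor, hence a vertex of $\gnccx$; being pairwise adjacent in the reduced ring $\ccx$ they are automatically pairwise distinct. Thus $f$, $1_W$, $1_V$ form a triangle, completing the proof.

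I do not anticipate a genuine obstacle here. The only two points requiring a little care are (i) ensuring the clopen sets produced actually lie inside int$Z(f)$, which is handled by intersecting with a clopen neighbourhood contained in int$Z(f)$; and (ii) observing that adjacency in a reduced ring forces distinct endpoints, so no separate argument is needed to check that $f$, $1_W$, $1_V$ are three distinct vertices.
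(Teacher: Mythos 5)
Your proof is correct and follows essentially the same route as the paper: the forward direction extracts the two disjoint nonempty open sets $X\setminus Z(g)$ and $X\setminus Z(h)$ inside int$Z(f)$ (the paper phrases this as a contradiction with int$Z(f)$ a singleton), and the converse builds characteristic functions of clopen subsets of int$Z(f)$. The only cosmetic difference is that you construct both companion vertices $1_W,1_V$ explicitly, whereas the paper constructs one clopen set $K$ and invokes Theorem \ref{t-2.1} to supply the third vertex of the triangle.
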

	\begin{proof}
		First let $ f $ be a vertex of a triangle in $ \gnccx $. Then there exist $ g,h\in \vn $ such that $ fg=gh=hf=0 $. Now $ fg=0 $ implies that $ X\setminus Z(g) \subseteq$ int$ Z(f) $. We argue by contradiction and assume that int$ Z(f)=\{p\} $ for a $ p\in X $. Since $ X\setminus Z(g)\neq\emptyset$, the last inclusion relation therefore implies that int($X\setminus Z(g))=\{p\}  $. Analogously int$ (X\setminus Z(h))=\{p\} $. Therefore int($ X\setminus Z(g)) $ $ \cap $ int($ X\setminus Z(h))\neq\emptyset$. This contradicts the relation $ Z(g)\cup Z(h)=X $. Thus int$ Z(f) $ contains more than one member.
		
	To prove the converse, let $ \{p,q\}\subseteq  $ int$ Z(f)  $ for a pair of distinct points $ p, q $ in $ X $. By zero-dimensionality of $ X $, we can find out a clopen set $ K $ in $ X $ such that $ p\in K\subseteq $ int$ Z(f)\setminus\{q\} $. The function $ 1_K $ is a vertex in $ \gnccx $ and we observe that $ q\in $ int$ Z(f) $ $ \cap $ int$ Z(1_K) $. Thus int$ Z(f)\ \cap $ int$ Z(1_K)\neq\emptyset $. It follows from Theorem \ref{t-2.1} that there exists a vertex $ h $ in $ \gnccx $ adjacent to both $ f $ and $ g $. Consequently, $ f $ is a vertex of a triangle.
	\end{proof}
	\begin{theorem}\label{t-2.8}
		$ \gnccx $ is a triangulated graph if and only if $ X $ does not contain any isolated point. 
	\end{theorem}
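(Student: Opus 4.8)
The plan is to deduce this directly from Theorem~\ref{t-2.7}, which already pinpoints exactly which vertices lie on a triangle: by definition $\gnccx$ is triangulated precisely when \emph{every} vertex $f\in\vn$ satisfies the property that int$Z(f)$ contains at least two distinct points. Alongside this I would use the elementary topological equivalence that $X$ has no isolated point if and only if every nonempty open subset of $X$ contains at least two points (a singleton open set is exactly an isolated point). With these two reformulations in hand the statement becomes almost immediate, and the argument splits naturally into the two implications.

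For the ``if'' direction I would assume that $X$ has no isolated point and take an arbitrary vertex $f\in\vn$. Being a zero divisor, $f$ admits some $g\in\ccx$ with $g\neq 0$ and $fg=0$; then $Z(f)\cup Z(g)=X$, so the nonempty open set $X\setminus Z(g)$ is contained in $Z(f)$, hence in int$Z(f)$. Thus int$Z(f)$ is a nonempty open set, and since $X$ has no isolated point it must contain at least two distinct points. Theorem~\ref{t-2.7} then places $f$ on a triangle, and since $f$ was arbitrary, $\gnccx$ is triangulated.

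For the ``only if'' direction I would argue by contraposition: suppose $X$ has an isolated point $p$. Then $\{p\}$ is clopen (open because $p$ is isolated, closed because $X$ is Hausdorff), so $X\setminus\{p\}$ is clopen as well, whence $f:=1_{X\setminus\{p\}}\in\ccx$; moreover $f\neq 0$ and $f\cdot 1_{\{p\}}=0$ with $1_{\{p\}}\neq 0$, so $f\in\vn$. Now $Z(f)=\{p\}$ is open, so int$Z(f)=\{p\}$ is a single point, and Theorem~\ref{t-2.7} shows $f$ lies on no triangle; hence $\gnccx$ is not triangulated.

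Since the real content is already packaged inside Theorem~\ref{t-2.7}, there is no serious obstacle here. The only points demanding a little care are the choice of witness vertex in the second implication --- one must use $1_{X\setminus\{p\}}$ rather than $1_{\{p\}}$, since the latter may perfectly well sit on a triangle --- and, if one wants to be scrupulous, the degenerate case $|X|\le 1$ in which $\gnccx$ has no vertices at all and the statement should be read accordingly.
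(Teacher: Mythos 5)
Your proposal is correct and follows exactly the route the paper intends: Theorem~\ref{t-2.8} is stated without proof as an immediate consequence of Theorem~\ref{t-2.7}, and your deduction (no isolated point $\Leftrightarrow$ every nonempty open set has at least two points, applied to int$Z(f)\supseteq X\setminus Z(g)$ in one direction and to the witness $1_{X\setminus\{p\}}$ in the other) supplies precisely the missing details. Your remarks on choosing $1_{X\setminus\{p\}}$ rather than $1_{\{p\}}$ and on the degenerate case $|X|\le 1$ are sensible but do not change the substance.
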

	Since the analogous fact for the zero-divisor graph $ \Gamma(C(X)) $ of the ring $ C(X) $ is already established in Proposition 2.1(ii) in \cite{ref4}, we make the following comment.
	\begin{remark}
		For a zero-dimensional space $ X $, the graph $ \gnccx $ is triangulated if and only if the graph $ \Gamma(C(X)) $ is triangulated.
	\end{remark}
	\begin{theorem}\label{t-2.10}
		$ \gnccx $ is never hypertriangulated.
	\end{theorem}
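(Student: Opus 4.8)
The plan is to exhibit a single edge of $\gnccx$ that lies on no triangle; recall that $\gnccx$ is hypertriangulated precisely when every edge is a side of a triangle. By Theorem \ref{t-2.1}, a pair of adjacent vertices $f,g$ (so $fg=0$) is a side of some triangle exactly when there is a vertex $h$ adjacent to both $f$ and $g$, i.e.\ exactly when $\mathrm{int}\,Z(f)\cap\mathrm{int}\,Z(g)\neq\emptyset$. Hence it suffices to produce two adjacent vertices $f,g$ with $\mathrm{int}\,Z(f)\cap\mathrm{int}\,Z(g)=\emptyset$.

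The natural candidates are the complementary characteristic functions of a clopen set. First I would observe that whenever $\gnccx$ has a vertex it in fact has an edge: a nonzero zero-divisor $f$ satisfies $f^2\neq 0$ (since $f^2=0$ forces $f(x)=0$ for every $x\in X$), so a nonzero $g$ with $fg=0$ is necessarily distinct from $f$. From such an edge one extracts a proper nonempty clopen subset of $X$: the set $X\setminus Z(f)$ is nonempty and open, it is contained in $Z(g)\subsetneq X$, and by zero-dimensionality a chosen point of it sits inside a clopen set $K$ with $\emptyset\neq K\subseteq X\setminus Z(f)\subseteq Z(g)\subsetneq X$.

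Now fix such a $K$ with $\emptyset\neq K\subsetneq X$. Then $1_K,\ 1_{X\setminus K}\in\ccx$ are nonzero, distinct, and satisfy $1_K\cdot 1_{X\setminus K}=0$, so they are adjacent vertices of $\gnccx$. Since $Z(1_K)=X\setminus K$ and $Z(1_{X\setminus K})=K$ are both clopen, we get $\mathrm{int}\,Z(1_K)\cap\mathrm{int}\,Z(1_{X\setminus K})=(X\setminus K)\cap K=\emptyset$. By the reformulation coming from Theorem \ref{t-2.1}, the edge joining $1_K$ and $1_{X\setminus K}$ lies on no triangle, and therefore $\gnccx$ is not hypertriangulated.

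I do not anticipate any real obstacle: the argument is a direct application of Theorem \ref{t-2.1} together with the abundance of characteristic functions of clopen sets guaranteed by zero-dimensionality. The only point needing a word of care is the degenerate situation in which $\gnccx$ has no vertices at all (which is exactly the case where $X$ is connected, so that $\ccx=\R$), and this is tacitly excluded by treating $\gnccx$ as a genuine, non-trivial graph.
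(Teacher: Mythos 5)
Your proof is correct and is essentially the paper's own argument: both take a proper nonempty clopen set $K$, form the adjacent vertices $1_K$ and $1_{X\setminus K}$, note that $\mathrm{int}\,Z(1_K)\cap\mathrm{int}\,Z(1_{X\setminus K})=\emptyset$, and invoke Theorem \ref{t-2.1} to conclude that this edge lies on no triangle. Your extra care about where the clopen set $K$ comes from and about the degenerate case of an empty vertex set is a minor refinement, not a different route.
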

	\begin{proof}
		Since $ X $ is zero-dimensional, there exists a clopen set $ K $ in $ X $ such that $\emptyset\neq K\neq X$. Let $ f=1_K $ and $ g= 1_{X\setminus K}$. Then $ f $ and $ g $ are vertices in $ \gnccx $ and are adjacent because $ Z(f)\cup Z(g)=X $. We observe in addition that int$ Z(f)\ \cap $ int$ Z(g)=\emptyset $. It follows from Theorem \ref{t-2.1} that there does not exist any vertex in $ \gnccx $ adjacent to both $ f $ and $ g $. Hence the edge $ f\sim g $ is never an edge of a triangle. Thus $ \gnccx $ is not hypertriangulated.
	\end{proof}
	 The following proposition is a straightforward countable counterpart of Proposition 2.2 in \cite{ref4}. We simply state this result without any proof because this can be accomplished on using Theorem \ref{t-2.1} and Theorem \ref{t-2.4}.
	 \begin{theorem}\label{t-2.11}
	 	For any two vertices $ f $ and $ g $ in $ \gnccx $,\begin{enumerate}
	 		\item $ c(f,g)=3 $ if and only if $ Z(f)\cup Z(g)=X $ and int$ Z(f)\ \cap $ int$ Z(g)\neq\emptyset $.
	\item $ c(f,g)=4 $ if and only if either $ Z(f)\cup Z(g)\neq X $ and int$ Z(f)\ \cap $ int$ Z(g)\neq\emptyset $ or $ Z(f)\cup Z(g)=X $ and int$ Z(f)\ \cap $ int$ Z(g)=\emptyset $.
	\item $ c(f,g)=6 $ if and only if $ Z(f)\cup Z(g)\neq X $ and int$ Z(f)\ \cap $ int$ Z(g)=\emptyset $.
 	\end{enumerate}
	 \end{theorem}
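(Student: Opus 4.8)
The plan is to run a case analysis governed by the two dichotomies occurring in the statement: whether $Z(f)\cup Z(g)=X$, and whether $\mathrm{int}\,Z(f)\cap\mathrm{int}\,Z(g)\neq\emptyset$. By the very definition of the graph the first condition is equivalent to $fg=0$, i.e.\ to $f\sim g$, and by Theorem \ref{t-2.1} the second is equivalent to the existence of a vertex of $\gnccx$ adjacent to both $f$ and $g$. The four combinations of these two dichotomies are precisely the three cases of the statement, the two hypotheses of item~(2) being the two ``mixed'' combinations; so once the value of $c(f,g)$ is pinned down in each combination, all three biconditionals follow at once. Throughout one may also invoke Theorem \ref{t-2.4} to recall that $d(f,g)$ equals $1$, $2$, $3$ according as $fg=0$, or $fg\neq0$ with a common neighbour, or $fg\neq0$ with none.

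I would first establish the lower bounds by a short combinatorial remark: on any cycle through $f$ and $g$, the vertices $f$ and $g$ cut the cycle into two arcs whose edge-lengths sum to the length of the cycle; an arc of length $1$ forces $fg=0$, and an arc of length $2$ exhibits a common neighbour of $f$ and $g$. Hence if $fg=0$ but $f,g$ have no common neighbour, no arc of such a cycle can have length $2$ and at most one arc has length $1$, so the cycle has length $\geq 4$; if $fg\neq0$ with a common neighbour, a triangle is impossible and again the length is $\geq 4$; and if $fg\neq0$ with no common neighbour, both arcs must have length $\geq 3$, so the length is $\geq 6$. In the remaining combination ($fg=0$ and a common neighbour $h$ exists) the triangle $f\sim g\sim h\sim f$ shows $c(f,g)=3$.

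The substantive part is to exhibit cycles attaining these bounds, and here, as in the proofs above, zero-dimensionality together with scalar multiples of characteristic functions of clopen sets does the work. Each vertex being a nonzero zero-divisor, $\mathrm{int}\,Z(f)$ and $\mathrm{int}\,Z(g)$ are nonempty, so one may fix nonempty clopen sets $W_f\subseteq\mathrm{int}\,Z(f)$ and $W_g\subseteq\mathrm{int}\,Z(g)$ (automatically disjoint when the interiors are disjoint), and, when the interiors meet, a nonempty clopen $W\subseteq\mathrm{int}\,Z(f)\cap\mathrm{int}\,Z(g)$. The realizing cycles are then, respectively: the triangle $f\sim g\sim 1_W\sim f$; the quadrilateral $f\sim 1_W\sim g\sim 2\cdot 1_W\sim f$ in the case $fg\neq0$ with a common neighbour; the quadrilateral $f\sim g\sim 1_{W_g}\sim 1_{W_f}\sim f$ in the case $fg=0$ with no common neighbour; and the hexagon $f\sim 1_{W_f}\sim 1_{W_g}\sim g\sim 2\cdot 1_{W_g}\sim 2\cdot 1_{W_f}\sim f$ in the last case. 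One checks directly that consecutive functions multiply to $0$ (using $W_f\subseteq Z(f)$, $W_g\subseteq Z(g)$, $W\subseteq Z(f)\cap Z(g)$ and $W_f\cap W_g=\emptyset$), so that in each case the length matches the lower bound, giving $c(f,g)=3,4,4,6$ respectively.

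The main obstacle --- really the only delicate point --- is to verify that the vertices listed in each construction are pairwise \emph{distinct}. The basic fact is that a nonzero $f$ is never equal to $c\cdot 1_W$ for a scalar $c\neq0$ and a nonempty clopen $W\subseteq\mathrm{int}\,Z(f)$, since $f=c\cdot 1_W$ would give $\mathrm{int}\,Z(f)=X\setminus W$ and hence $W\subseteq X\setminus W$; this separates every internal vertex of the form $c\cdot 1_{W_f}$ from $f$ and every $c\cdot 1_{W_g}$ from $g$, while disjointness of $W_f,W_g$ and distinctness of scalars separate the internal vertices from one another. To separate, say, $1_{W_f}$ from the other endpoint $g$: in the last case ($Z(f)\cup Z(g)\neq X$) one argues that $g=c\cdot 1_{W_f}$ would force $Z(f)\cup Z(g)\supseteq W_f\cup(X\setminus W_f)=X$, a contradiction; in the other cases such an equality can hold for at most one value of $c$, so one simply selects the scalar multiples to dodge the finitely many forbidden coincidences, which is possible since $\R$ is infinite. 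With distinctness in hand, the four constructions above are genuine cycles of lengths $3,4,4,6$, and combining this with the lower bounds of the second paragraph yields all three biconditionals.
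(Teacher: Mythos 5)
Your proof is correct. The paper itself omits the proof of this theorem, merely noting that it follows from Theorem \ref{t-2.1} and Theorem \ref{t-2.4} as the countable counterpart of Proposition 2.2 in \cite{ref4}; your argument supplies exactly the intended details --- lower bounds via the arc decomposition of a cycle through $f$ and $g$, realizing cycles built from scalar multiples of characteristic functions of clopen sets (the same device used throughout Section 2), and the distinctness checks that such write-ups usually gloss over.
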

	
	\section{When does $ \gnccx $ become complemented?}
	Like any graph, we call $ \gnccx $ a complemented one, if for any vertex $ f $, there exists a vertex $ g $, orthogonal to $ f $, and we write $ f\perp g $ in the sense that $ fg=0 $ and there does not exist any vertex $ h $, adjacent to both $ f $ and $ g $. The next result follows from Theorem \ref{t-2.1} and Theorem \ref{t-2.4}(1).
	\begin{theorem}\label{t-3.1}
		$ \gnccx $ is a complemented graph if and only if for any vertex $ f $, there exists a vertex $ g $ such that $ Z(f)\cup Z(g)=X $ and int$ Z(f)\ \cap $ int$ Z(g)=\emptyset $.
	\end{theorem}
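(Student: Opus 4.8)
The plan is to translate each clause in the definition of ``$f\perp g$'' into the zero-set language supplied by the two quoted results, after which the equivalence becomes a piece of bookkeeping. Recall that, by definition, $\gnccx$ is complemented precisely when every vertex $f$ admits a vertex $g$ with $f\perp g$, where $f\perp g$ asserts two things: (i) $fg=0$, and (ii) no vertex $h$ is adjacent to both $f$ and $g$.

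First I would note that, for $f,g\in\vn$, the condition $fg=0$ says exactly that for every $x\in X$ either $f(x)=0$ or $g(x)=0$, i.e. $Z(f)\cup Z(g)=X$; this is also the content of Theorem \ref{t-2.4}(1), which phrases it as $d(f,g)=1$. Next, clause (ii) is handled directly by Theorem \ref{t-2.1}: a vertex $h$ adjacent to both $f$ and $g$ exists if and only if $\mathrm{int}\,Z(f)\cap\mathrm{int}\,Z(g)\neq\emptyset$, so the non-existence of such an $h$ is precisely $\mathrm{int}\,Z(f)\cap\mathrm{int}\,Z(g)=\emptyset$. Combining the two, $f\perp g$ is equivalent to the conjunction $Z(f)\cup Z(g)=X$ and $\mathrm{int}\,Z(f)\cap\mathrm{int}\,Z(g)=\emptyset$, and then quantifying over all vertices $f$ yields the stated equivalence.

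The only point that warrants a line of care is that the $g$ appearing in the conclusion must genuinely be a vertex of $\gnccx$, that is, a nonzero zero-divisor. This is automatic: if a nonzero $g$ satisfies $fg=0$ then $g$ is itself a zero-divisor, so it suffices to check $g\neq 0$; and $g\neq 0$ is forced by the two conditions, because $f$, being a nonzero zero-divisor, has $\mathrm{int}\,Z(f)\neq\emptyset$ (any $k\neq 0$ with $fk=0$ gives the nonempty open set $X\setminus Z(k)\subseteq Z(f)$), so the choice $g=0$ would give $\mathrm{int}\,Z(f)\cap\mathrm{int}\,Z(g)=\mathrm{int}\,Z(f)\neq\emptyset$, violating the orthogonality condition. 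I do not anticipate any real obstacle here; the entire content is the matching of the definition against Theorems \ref{t-2.1} and \ref{t-2.4}(1), and the assertion follows essentially immediately once those are available.
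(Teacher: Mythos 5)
Your proposal is correct and matches the paper's argument: the paper derives this result exactly by combining Theorem \ref{t-2.1} (existence of a common neighbour $\Leftrightarrow$ int$Z(f)\cap$ int$Z(g)\neq\emptyset$) with Theorem \ref{t-2.4}(1) (adjacency $\Leftrightarrow Z(f)\cup Z(g)=X$). Your extra check that $g\neq 0$ is harmless (and the paper makes the same observation later, in the proof of Theorem \ref{t-3.13}), though strictly unnecessary here since $g$ is already quantified over vertices on both sides of the equivalence.
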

	At this point we like to mention that it is proved in Corollary 2.5 in \cite{ref4} that the zero-divisor graph $ \Gamma(C(X)) $ of the ring $ C(X) $ is complemented if and only if the space of minimal prime ideals in the ring $ C(X) $ is compact. We shall obtain a countable counterpart of this fact and also some other related results concerning the zero-divisor graphs of the intermediate rings that lie between the two rings $ C_c^*(X) $ and $ C_c(X) $. We need to recall a few basic information on the space of minimal prime ideals of a commutative ring $ A $ from the article \cite{ref9} and specialize these results with the choice $ A=\ccx $.

	\begin{notation}
		 Let $ \pccx $ be the set of all minimal prime ideals in $ \ccx $. Each prime ideal in $ \ccx $ contains a minimal prime ideal (typical Zorn's Lemma arguments) and there are enough of prime ideals in $ \ccx $, indeed, each maximal ideal in $ \ccx $ is a prime ideal. Thus $ \pccx \neq\emptyset $.
	\end{notation}
\begin{notation}
	For any $ S\subseteq\ccx $ and $ \mathscr{S}\subseteq\pccx $, we set $ h(S)=\{P\in \pccx: P\supseteq S\} $, called the hull of $ S $ and $ k(\mathscr{S})=\bigcap\{P:P\in \mathscr{S}\} $, called the kernel of $ \mathscr{S} $. It is absolutely a routine check to prove that $ \mathcal{B}=\{h(f):f\in \ccx\} $ is a base for the closed sets of some topology on $ \pccx $. We call the set $ \pccx $ equipped with this topology, the space of minimal prime ideals in $ \ccx $.
\end{notation}	
	We reproduce below the following 4 facts from \cite{ref9} with the choice $ A=\ccx $. We will need these facts to prove the main results in this section.
	\begin{theorem}\label{t-3.5}
		Let $ f,g\in \ccx $,\ $ S_1, S_2 $ are subsets of $ \ccx $ and $ \mathscr{S}_1, \mathscr{S}_2 $ are subfamilies of $ \pccx $. Then
		\begin{enumerate}
			\item If $ S_1\subseteq S_2 $ then $ h(S_1)\supseteq h(S_2) $.
			\item If $ \mathscr{S}_1\subseteq\mathscr{S}_2 $ then $ k(\mathscr{S}_1)\supseteq k(\mathscr{S}_2) $.
				\item $h( S_1\cap S_2) = h(S_1)\cup h(S_2) $ if $S_1$ and $S_2$ are ideals in $\ccx$.
			\item $ k(\mathscr{S}_1\cup \mathscr{S}_2)=k(\mathscr{S}_1)\cap k(\mathscr{S}_2) $.
		\end{enumerate}
	Indeed, these results can be established independently almost immediately.
	\end{theorem}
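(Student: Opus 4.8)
The plan is to verify the four assertions directly from the definitions of the hull $h(\cdot)$ and the kernel $k(\cdot)$. I expect three of them to be purely formal, and only the third to make any use of the primeness of the members of $\pccx$.

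For (1), I would observe that if $S_1\subseteq S_2$ and $P\in h(S_2)$, then $P\supseteq S_2\supseteq S_1$, so $P\in h(S_1)$, giving $h(S_2)\subseteq h(S_1)$. For (2), since $\mathscr{S}_1\subseteq\mathscr{S}_2$, the kernel $k(\mathscr{S}_2)=\bigcap\{P:P\in\mathscr{S}_2\}$ is an intersection over a larger family than $k(\mathscr{S}_1)=\bigcap\{P:P\in\mathscr{S}_1\}$, whence $k(\mathscr{S}_2)\subseteq k(\mathscr{S}_1)$. For (4), I would simply split the index family $\mathscr{S}_1\cup\mathscr{S}_2$ and use commutativity and associativity of intersection: $k(\mathscr{S}_1\cup\mathscr{S}_2)=\bigcap\{P:P\in\mathscr{S}_1\}\cap\bigcap\{P:P\in\mathscr{S}_2\}=k(\mathscr{S}_1)\cap k(\mathscr{S}_2)$.

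The only item carrying any content is (3). The inclusion $h(S_1)\cup h(S_2)\subseteq h(S_1\cap S_2)$ is immediate from part (1), since $S_1\cap S_2\subseteq S_1$ and $S_1\cap S_2\subseteq S_2$. For the reverse inclusion I would argue by contradiction: given $P\in h(S_1\cap S_2)$ with $P\notin h(S_1)$ and $P\notin h(S_2)$, choose $a\in S_1\setminus P$ and $b\in S_2\setminus P$; because $S_1$ and $S_2$ are ideals of $\ccx$, we get $ab\in S_1 S_2\subseteq S_1\cap S_2\subseteq P$, and primeness of $P$ then forces $a\in P$ or $b\in P$, a contradiction. Hence $P\in h(S_1)\cup h(S_2)$, which establishes (3).

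I do not anticipate any real obstacle. The only subtlety worth flagging is that (3) genuinely needs both the ideal hypothesis, so that $S_1 S_2\subseteq S_1\cap S_2$, and the fact that every element of $\pccx$ is a prime ideal; the remaining three parts are formal consequences of the monotonicity of $h$ and $k$ together with the elementary behaviour of arbitrary intersections under enlargement of the index family.
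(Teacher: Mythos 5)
Your proof is correct and is exactly the routine verification the paper has in mind: the paper gives no proof, merely citing Henriksen--Jerison and remarking that the results "can be established independently almost immediately," and your arguments (monotonicity for (1), (2), (4), and the standard prime-avoidance argument using $ab\in S_1S_2\subseteq S_1\cap S_2$ for (3)) are the intended ones. Your observation that (3) alone uses the ideal hypothesis and the primeness of the members of $\pccx$ is accurate.
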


	For any $ S\subseteq\ccx $, set $ Ann(S)=\{f\in \ccx: fg=0$ for each $g\in S\} $.
\begin{theorem}\label{t-3.6}
	For any $ f\in\ccx $, $ h(Ann(f))=\pccx\setminus h(f) $. It follows that $ h(f) $ and $ h(Ann(f)) $ are disjoint clopen sets in $ \pccx $.
\end{theorem}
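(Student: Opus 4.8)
The plan is to establish the set-theoretic identity $h(Ann(f)) = \pccx \setminus h(f)$ directly and then read off the topological conclusion. The essential ingredient I would invoke is the standard characterization of minimal prime ideals in a reduced commutative ring (available from \cite{ref9}, or recalled as a lemma): if $P$ is a minimal prime ideal of $\ccx$ and $f \in P$, then there is some $g \in \ccx \setminus P$ with $fg = 0$. This uses that $\ccx$, being a ring of real-valued functions, has no nonzero nilpotent element, so the general statement ``$fg$ is nilpotent'' simplifies to ``$fg = 0$''.

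First I would prove the inclusion $\pccx \setminus h(f) \subseteq h(Ann(f))$, which needs only primeness, not minimality. Let $P \in \pccx$ with $f \notin P$, and take any $g \in Ann(f)$; then $fg = 0 \in P$, and since $P$ is prime and $f \notin P$, we conclude $g \in P$. Hence $Ann(f) \subseteq P$, i.e. $P \in h(Ann(f))$.

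Next I would prove the reverse inclusion $h(Ann(f)) \subseteq \pccx \setminus h(f)$. Suppose $P \in \pccx$ with $Ann(f) \subseteq P$, and assume, for contradiction, that $f \in P$. By the minimal-prime characterization there exists $g \in \ccx \setminus P$ with $fg = 0$; but then $g \in Ann(f) \subseteq P$, contradicting $g \notin P$. Therefore $f \notin P$, that is, $P \in \pccx \setminus h(f)$. Combining the two inclusions yields $h(Ann(f)) = \pccx \setminus h(f)$.

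Finally, for the topological assertion: $h(f) = h(\{f\})$ is a basic closed set by the very definition of the hull-kernel topology on $\pccx$, while $h(Ann(f)) = \bigcap_{g \in Ann(f)} h(g)$ is an intersection of closed sets, hence closed. Since each of $h(f)$ and $h(Ann(f))$ is exactly the complement of the other in $\pccx$, both are clopen, and disjointness $h(f) \cap h(Ann(f)) = \emptyset$ is then immediate. The only subtle point is the reducedness of $\ccx$, which is what makes the minimal-prime characterization usable in the clean form $fg = 0$; the rest is a short formal argument, so I do not anticipate a genuine obstacle.
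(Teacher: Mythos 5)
Your proof is correct. The paper gives no proof of this statement at all---it simply quotes the fact from Henriksen and Jerison \cite{ref9} with the choice $A=\ccx$---and your argument is precisely the standard one behind that citation: the inclusion $\pccx\setminus h(f)\subseteq h(Ann(f))$ uses only primeness, while the reverse inclusion uses the characterization of minimal prime ideals in a reduced ring (each element of a minimal prime is annihilated by some element outside it), which is applicable because $\ccx$ has no nonzero nilpotents.
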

	\begin{theorem}\label{t-3.7}
		Let $ f,g,l\in\ccx $ and $ S\subseteq\ccx $. Then the following results hold.
		\begin{enumerate}
			\item $ kh(Ann(S))=Ann(S) $.
			\item $ Ann(l)=Ann(f)\cap Ann(g) $ if and only if $ h(l)=h(f)\cap h(g) $.
			\item $ Ann(Ann(f))=Ann(g) $ if and only if $ h(f)=h(Ann(g)) $.
		\end{enumerate}
	\end{theorem}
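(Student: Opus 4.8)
The plan is to establish (1) first and then use it as a lever for (2) and (3). Beyond Theorems \ref{t-3.5} and \ref{t-3.6}, the only inputs needed are that $\ccx$ is a reduced ring (it is a subring of $C(X)$, which has no nonzero nilpotents), so that $k(\pccx)=\{0\}$ and every nonzero element of $\ccx$ lies outside some member of $\pccx$; and the standard description of minimal primes of a reduced ring, namely that if $P\in\pccx$ and $a\in P$ then $as=0$ for some $s\notin P$.

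For (1) I would first take $S=\{f\}$. By Theorem \ref{t-3.6}, $h(Ann(f))=\pccx\setminus h(f)=\{P\in\pccx:f\notin P\}$, so $kh(Ann(f))=\bigcap\{P\in\pccx:f\notin P\}$. The inclusion $Ann(f)\subseteq kh(Ann(f))$ is clear, since $uf=0$ and $f\notin P$ force $u\in P$. Conversely, let $u\in kh(Ann(f))$; if $uf\neq0$ then, $\ccx$ being reduced, there is $P\in\pccx$ with $uf\notin P$, and primeness gives $u\notin P$ together with $f\notin P$, the latter placing $P$ in $h(Ann(f))$ and hence forcing $u\in P$, a contradiction. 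Thus $kh(Ann(f))=Ann(f)$. For arbitrary $S$ write $Ann(S)=\bigcap_{g\in S}Ann(g)$; again $Ann(S)\subseteq kh(Ann(S))$ is automatic, and if $u\in kh(Ann(S))$ then for each $g\in S$ every $P\in\pccx$ with $g\notin P$ contains $Ann(g)\supseteq Ann(S)$ by Theorem \ref{t-3.6}, so $P\in h(Ann(S))$ and $u\in P$; hence $u\in kh(Ann(g))=Ann(g)$ by the single-generator case, and since $g$ was arbitrary, $u\in Ann(S)$. The consequence I would then record, and use repeatedly: if $I$ and $J$ are annihilator ideals of $\ccx$, then $I=J$ if and only if $h(I)=h(J)$, because $I=kh(I)=kh(J)=J$.

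For (2), note that $Ann(f)\cap Ann(g)=Ann(\{f,g\})$ is an annihilator ideal. If $Ann(l)=Ann(f)\cap Ann(g)$, applying $h$ and Theorem \ref{t-3.5}(3) gives $h(Ann(l))=h(Ann(f))\cup h(Ann(g))$, and passing to complements in $\pccx$ via Theorem \ref{t-3.6} gives $h(l)=h(f)\cap h(g)$. Conversely, $h(l)=h(f)\cap h(g)$ complements to $h(Ann(l))=h(Ann(f))\cup h(Ann(g))=h(Ann(f)\cap Ann(g))$, and as both $Ann(l)$ and $Ann(f)\cap Ann(g)$ are annihilator ideals, the recorded consequence of (1) yields $Ann(l)=Ann(f)\cap Ann(g)$. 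For (3) the key step is the identity $h(Ann(Ann(f)))=h(f)$: the inclusion $\subseteq$ is Theorem \ref{t-3.5}(1) applied to $f\in Ann(Ann(f))$, while for $\supseteq$ one takes $P\in h(f)$, invokes the minimal-prime description to get $v\notin P$ with $vf=0$ (so $v\in Ann(f)$), and then for each $u\in Ann(Ann(f))$ has $uv=0\in P$ with $v\notin P$, hence $u\in P$; thus $Ann(Ann(f))\subseteq P$, i.e.\ $P\in h(Ann(Ann(f)))$. Since $Ann(Ann(f))$ and $Ann(g)$ are annihilator ideals, the consequence of (1) together with this identity gives $Ann(Ann(f))=Ann(g)\iff h(Ann(Ann(f)))=h(Ann(g))\iff h(f)=h(Ann(g))$.

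The only spots carrying genuine content are the reverse inclusion in (1) and the inclusion $h(f)\subseteq h(Ann(Ann(f)))$ in (3); both rely on reducedness of $\ccx$ and on the two standard minimal-prime facts stated at the outset, so once those are granted I anticipate no real obstacle, the rest being formal manipulation of $h$, $k$, complements, and finite intersections. This is the specialization to $A=\ccx$ of the corresponding identities in \cite{ref9}.
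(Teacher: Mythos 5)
Your proof is correct. Note, however, that the paper itself offers no proof of this statement: it is one of the facts ``reproduced from \cite{ref9}'' (Henriksen--Jerison) with the specialization $A=\ccx$, so there is nothing in the text to compare your argument against line by line. What you have written is a complete, self-contained verification of the quoted result, resting on exactly the right inputs: reducedness of $\ccx$ (hence $k(\pccx)=\{0\}$), the standard characterization of minimal primes in a reduced ring (if $a\in P\in\pccx$ then $as=0$ for some $s\notin P$), and Theorems \ref{t-3.5} and \ref{t-3.6}. The two steps you flag as carrying real content --- the reverse inclusion $kh(Ann(f))\subseteq Ann(f)$ and the inclusion $h(f)\subseteq h(Ann(Ann(f)))$ --- are handled correctly, and the reduction of (2) and (3) to the observation that annihilator ideals are determined by their hulls is exactly how these identities are obtained in the source. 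The one point worth making explicit if this were to be written up is that $Ann(f)$ and $Ann(g)$ are ideals, so that the hypothesis of Theorem \ref{t-3.5}(3) is met; this is immediate but is used silently in part (2).
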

	The following proposition is the countable counterpart of Lemma 5.4 in \cite{ref9}.
	\begin{theorem}\label{t-3.9}
		Let $ f,g\in \ccx $, then\begin{enumerate}
			\item $ h(Ann(f))\subseteq h(g) $ if and only if $ Z(f)\cup Z(g)=X $.
			\item $ h(Ann(f))\supseteq h(g) $ if and only if int$Z(f)\ \cap  $ int$ Z(g)=\emptyset $.
		\end{enumerate}
	\end{theorem}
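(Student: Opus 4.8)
The plan is to strip off $h(Ann(f))$ by means of Theorem~\ref{t-3.6}, which identifies it with the clopen complement $\pccx\setminus h(f)$, so that each clause becomes a purely set-theoretic condition on the closed sets $h(f),h(g)$ in $\pccx$; one then translates that condition into the advertised zero-set condition using two elementary inputs. The first is that $\ccx$ is reduced (being a subring of $C(X)$), so that $k(\pccx)=\bigcap\{P:P\in\pccx\}=(0)$ --- here we also use, as recorded above, that every prime of $\ccx$ contains a minimal one. The second is the standard observation that for $a,b\in\ccx$ the equality $ab=0$ makes the open set $X\setminus Z(b)$ a subset of $Z(a)$, hence of int$Z(a)$.

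For (1): by Theorem~\ref{t-3.6}, $h(Ann(f))\subseteq h(g)$ is equivalent to $\pccx\setminus h(f)\subseteq h(g)$, i.e.\ to $h(f)\cup h(g)=\pccx$. Since for a prime $P$ we have $fg\in P$ iff $f\in P$ or $g\in P$, the union $h(f)\cup h(g)$ equals $h(fg)$, so the condition reads $h(fg)=\pccx$, meaning $fg$ lies in every minimal prime, i.e.\ $fg\in k(\pccx)=(0)$. Thus the condition is exactly $fg=0$, which is $Z(f)\cup Z(g)=X$. I would first record the trivial direction ($fg=0$ sits in every prime) and then the converse, picking --- when $fg\neq 0$ --- a minimal prime that misses $fg$, whose existence is precisely the reducedness of $\ccx$.

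For (2): by Theorem~\ref{t-3.6} again, $h(Ann(f))\supseteq h(g)$ is equivalent to $h(f)\cap h(g)=\emptyset$. Writing $h(f)=\pccx\setminus h(Ann(f))$ and $h(g)=\pccx\setminus h(Ann(g))$ and applying Theorem~\ref{t-3.5}(3) to the ideals $Ann(f)$ and $Ann(g)$ gives
$h(f)\cap h(g)=\pccx\setminus h\bigl(Ann(f)\cap Ann(g)\bigr)=\pccx\setminus h\bigl(Ann(\{f,g\})\bigr)$,
using the trivial identity $Ann(f)\cap Ann(g)=Ann(\{f,g\})$. Hence $h(f)\cap h(g)=\emptyset$ iff $h(Ann(\{f,g\}))=\pccx$ iff $Ann(\{f,g\})\subseteq k(\pccx)=(0)$, that is, iff there is no nonzero $h\in\ccx$ with $fh=gh=0$. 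By Theorem~\ref{t-2.1} (together with the observation on $ab=0$ from the first paragraph, for the remaining implication) this last condition is exactly int$Z(f)\ \cap$ int$Z(g)=\emptyset$, which settles (2).

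I expect the only delicate point to be bookkeeping: keeping straight which sets are complemented in $\pccx$ and checking that the identities of Theorem~\ref{t-3.5} apply to the (possibly non-principal) ideals $Ann(f),Ann(g)$ in play, together with the fact that the idempotents $1_W$ needed to realise elements of $Ann(\{f,g\})$ are available because $X$ is zero-dimensional. There is no genuine hurdle beyond this; the statement is the countable analogue of Lemma~5.4 in \cite{ref9} and the argument there goes through once Theorem~\ref{t-3.6} is available, the sole ring-theoretic input being that $\ccx$, like $C(X)$, has trivial nilradical.
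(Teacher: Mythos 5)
Your proof is correct and follows essentially the same route as the paper's: both parts are reduced via Theorem~\ref{t-3.6} to set-theoretic statements about the hulls $h(f)$, $h(g)$ in $\pccx$ and then translated back to the ring using the reducedness of $\ccx$ (i.e.\ $k(\pccx)=(0)$) and finally to topology via zero-sets. The only divergences are local and harmless: in (1) the paper applies $k$ to the inclusion and uses $kh(Ann(f))=Ann(f)$ (Theorem~\ref{t-3.7}(1)) together with $g\in kh(g)$, where you instead use $h(fg)=h(f)\cup h(g)$; and in (2) the paper closes with the a.c.-ring identity $Ann(f)\cap Ann(g)=Ann(f^2+g^2)$ and the fact that a non-zero-divisor has a zero-set with empty interior, where you argue directly that $Ann(\{f,g\})\neq(0)$ iff int$Z(f)\,\cap\,$int$Z(g)\neq\emptyset$ via the characteristic-function construction of Theorem~\ref{t-2.1} --- both finishes are sound.
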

	\begin{proof}
		(1). Let $ h(Ann(f))\subseteq h(g) $. Then from Theorem \ref{t-3.5}(2) and \ref{t-3.7}(1), it follows that $ Ann(f)\supseteq kh(g) $. But $ g\in kh(g) $ implies that $ fg=0 $. On the other hand if $ Z(f)\cup Z(g)=X $, then $ g\in Ann(f) $. Consequently $ h(g)\supseteq h(Ann(f)) $ by Theorem \ref{t-3.5}(1).
		
		(2). In view of Theorem \ref{t-3.6},
		\begin{alignat*}{2}
&		  \ h(g)\subseteq h(Ann(f))\\ 
		 \Leftrightarrow&\  h(g)\cap h(f)=\emptyset\\
		 \Leftrightarrow&\  \pccx\setminus h(Ann(g))\cap (\pccx\setminus h(Ann(f)))=\emptyset \\
		 \Leftrightarrow &\  h(Ann(f))\cup h(Ann(g))=\pccx\\
		 \Leftrightarrow &\ kh(Ann(f))\cap kh(Ann(g))=k(\pccx)=\{0\}\\
		&\ \ \ \ \ \ \ \ \ \ \ \ \ \ \ \ \ \ \ \ \ \ \ \ \ \ \ \ \ \ \ \  (\text{by Theorem \ref{t-3.5}(3),\ \ \ref{t-3.5}(4)\ \ and \ref{t-3.7}(1))}\\
		 \Leftrightarrow &\  Ann(f)\cap Ann(g)=\{0\}\ (\text{by Theorem \ref{t-3.7}(1))}\\
		 \Leftrightarrow &\ Ann(f^2+g^2)=\{0\} \\
		 \Leftrightarrow &\  (f^2+g^2)\  \text{is not a divisor of zero in}\ \ccx\\
		 \Leftrightarrow &\ int Z(f^2+g^2)=\emptyset\\
		 \Leftrightarrow &\ intZ(f)\ \cap\ intZ(g)=\emptyset.
		\end{alignat*}
	\end{proof}
Theorem \ref{t-3.1} in conjunction with Theorem \ref{t-3.9} yields the following result:
	\begin{theorem}\label{t-3.10}
		The zero-divisor graph $ \gnccx $ is complemented if and only if for any vertex $ f $ in this graph, there exists a vertex $ g $ such that $ h(g)=h(Ann(f)) $.
	\end{theorem}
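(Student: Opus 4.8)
The plan is to derive Theorem \ref{t-3.10} directly by combining Theorem \ref{t-3.1} with the two equivalences in Theorem \ref{t-3.9}, translating the purely set-theoretic conditions on zero-sets into hull-kernel language on $\pccx$. Recall that Theorem \ref{t-3.1} says $\gnccx$ is complemented precisely when every vertex $f$ admits a vertex $g$ with $Z(f)\cup Z(g)=X$ and $\operatorname{int}Z(f)\cap\operatorname{int}Z(g)=\emptyset$. So the entire content of the proof is to show that, for vertices $f,g\in\vn$, the conjunction of these two zero-set conditions is equivalent to the single identity $h(g)=h(Ann(f))$.

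The first step is the forward reading of Theorem \ref{t-3.9}: part (1) gives $h(Ann(f))\subseteq h(g)\iff Z(f)\cup Z(g)=X$, and part (2) gives $h(Ann(f))\supseteq h(g)\iff \operatorname{int}Z(f)\cap\operatorname{int}Z(g)=\emptyset$. Taking the conjunction of both, the left-hand sides combine to $h(g)=h(Ann(f))$ while the right-hand sides combine to exactly the pair of conditions appearing in Theorem \ref{t-3.1}. Thus for a fixed vertex $f$, the existence of a vertex $g$ with $Z(f)\cup Z(g)=X$ and $\operatorname{int}Z(f)\cap\operatorname{int}Z(g)=\emptyset$ is equivalent to the existence of a vertex $g$ with $h(g)=h(Ann(f))$, and quantifying over all vertices $f$ yields the statement. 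I would write this as a short chain of ``if and only if'' lines, citing Theorem \ref{t-3.1} at the start and Theorem \ref{t-3.9}(1),(2) in the middle.

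One small point I would make explicit: the equivalence is stated for \emph{vertices} $f,g$, i.e. nonzero zero divisors, so I should note that Theorem \ref{t-3.9} is applied only to such elements and that the characteristic-function constructions used earlier (e.g. in the proofs of Theorems \ref{t-2.1} and \ref{t-2.5}) guarantee there is no degeneracy — any $g$ with the required zero-set behaviour relative to a vertex $f$ is automatically a nonzero zero divisor, since $Z(f)\cup Z(g)=X$ with $f\ne 0$ forces $Z(g)\ne\emptyset$ hence $\operatorname{int}Z(g)\ne\emptyset$ (using zero-dimensionality), and $fg=0$ with $g\ne 0$ makes $g$ a vertex. I do not expect any genuine obstacle here; the result is essentially a formal corollary, and the only care needed is to keep the quantifier over $f$ outside the biconditional and to invoke the correct parts of Theorem \ref{t-3.9} in the right direction. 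The substantive work has already been done in establishing Theorems \ref{t-3.1} and \ref{t-3.9}.
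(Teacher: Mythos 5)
Your proposal is correct and is precisely the paper's argument: the authors state Theorem \ref{t-3.10} as an immediate consequence of Theorem \ref{t-3.1} together with the two parts of Theorem \ref{t-3.9}, whose conjunction converts the pair of conditions $Z(f)\cup Z(g)=X$ and $\operatorname{int}Z(f)\cap\operatorname{int}Z(g)=\emptyset$ into the single identity $h(g)=h(Ann(f))$. Your added remark on why the witness $g$ is genuinely a vertex is a harmless (and slightly more careful) elaboration of a point the paper leaves implicit.
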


		\begin{definition}
		A commutative ring $ A $ without nonzero nilpotents is
		said to satisfy the annihilator condition, or is called an a.c. ring, if for every $ x,y\in A $,
		there exists $ z\in A $ such that \[ Ann(z)=Ann(x)\cap Ann(y). \] Since for every $f,g\in \ccx$, $ Ann(h)=Ann(f)\cap Ann(g)$ where $h=f^2+g^2\in \ccx$, thus $ \ccx $ satisfies the annihilator condition, furthermore, $\ccx$ contains no nonzero nilpotents hence the following proposition is immediate by Theorem 3.4 in \cite{ref9} and Theorem \ref{t-3.7}(3).
		
	\end{definition}
	
	\begin{theorem}\label{tp-2.18}
		The space $ \pccx  $ is compact if and only if for every $ f\in\ccx  $ there exists $ g\in \ccx   $ such that $ h(g)=h(Ann(f)) $.
	\end{theorem}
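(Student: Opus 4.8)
The plan is to read off Theorem \ref{tp-2.18} from the general theory of minimal prime spectra of reduced a.c.\ rings in \cite{ref9}, using the hull--kernel dictionary already assembled in Theorems \ref{t-3.5}--\ref{t-3.7}. First I would record that $\ccx$ meets the two standing hypotheses of that theory, both of which are noted in the a.c.-ring definition recalled just above: $\ccx$ is reduced (a pointwise argument, since $f^{n}\equiv 0$ forces $f\equiv 0$), and it satisfies the annihilator condition with the explicit witness $h=f^{2}+g^{2}$, because $Z(f^{2}+g^{2})=Z(f)\cap Z(g)$ gives $Ann(f^{2}+g^{2})=Ann(f)\cap Ann(g)$.

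Next I would invoke Theorem 3.4 of \cite{ref9}, which for a reduced a.c.\ ring $A$ characterizes compactness of the space of minimal primes by an annihilator-level condition, namely that for every $x\in A$ there is $y\in A$ with $Ann(Ann(x))=Ann(y)$. Specializing $A=\ccx$, this says: $\pccx$ is compact if and only if for every $f\in\ccx$ there exists $g\in\ccx$ with $Ann(Ann(f))=Ann(g)$.

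The last step is purely translation into the hull form $h(g)=h(Ann(f))$. I would apply Theorem \ref{t-3.7}(3), which turns $Ann(Ann(f))=Ann(g)$ into $h(f)=h(Ann(g))$, and then Theorem \ref{t-3.6}, by which $h(Ann(g))=\pccx\setminus h(g)$ and $h(Ann(f))=\pccx\setminus h(f)$; hence $h(f)=h(Ann(g))$ is equivalent to $h(g)=\pccx\setminus h(f)=h(Ann(f))$. Chaining these equivalences, and reconciling the dummy variables, yields exactly the asserted biconditional.

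The substance of the argument is entirely in quoting \cite{ref9} correctly; the one point to be careful about is matching the precise quantifier shape of that theorem's compactness criterion to the statement ``there exists $g$ with $h(g)=h(Ann(f))$'', which is why $h(Ann(\cdot))=\pccx\setminus h(\cdot)$ (Theorem \ref{t-3.6}) and $kh(Ann(S))=Ann(S)$ (Theorem \ref{t-3.7}(1)) have to be used in tandem with Theorem \ref{t-3.7}(3). No genuinely new topology on $\pccx$ is needed beyond what \cite{ref9} supplies, which is precisely why the result can be flagged as immediate. If one prefers to stay inside the present paper, the same translation can instead be routed through Theorem \ref{t-3.9}(1)--(2): the statement ``there exists $g$ with $Z(f)\cup Z(g)=X$ and $\mathrm{int}\,Z(f)\cap\mathrm{int}\,Z(g)=\emptyset$'' is, by those two items, equivalent to ``there exists $g$ with $h(Ann(f))=h(g)$'', and the left-hand statement is exactly the a.c.-ring compactness criterion of \cite{ref9} rewritten via the description of non-zero-divisors in $\ccx$.
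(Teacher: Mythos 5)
Your proposal is correct and follows essentially the same route as the paper: the paper likewise observes that $\ccx$ is a reduced a.c.\ ring (with the witness $f^{2}+g^{2}$) and declares the theorem immediate from Theorem 3.4 of \cite{ref9} together with Theorem \ref{t-3.7}(3). Your write-up simply makes explicit the hull--kernel translation via Theorem \ref{t-3.6} that the paper leaves to the reader.
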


	We can combine Theorem \ref{t-3.10} and Theorem \ref{tp-2.18} to get the following result.
	\begin{theorem}\label{t-3.13}
		$ \gnccx $ is a complemented zero-divisor graph if and only if the space $ \pccx $ is compact.
	\end{theorem}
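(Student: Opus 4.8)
The plan is to obtain Theorem~\ref{t-3.13} by gluing together the two characterizations already established. Theorem~\ref{t-3.10} says that $\gnccx$ is complemented precisely when, for every vertex $f$, there is a vertex $g$ with $h(g)=h(Ann(f))$; Theorem~\ref{tp-2.18} says that $\pccx$ is compact precisely when, for every $f\in\ccx$, there is some $g\in\ccx$ with $h(g)=h(Ann(f))$. These conditions differ only in the range of the quantifiers --- vertices of $\gnccx$ (that is, nonzero zero divisors) versus arbitrary elements of $\ccx$ --- so the entire argument is a matter of reconciling those two ranges, and I would present it as the two implications.

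First, for ``$\pccx$ compact $\Rightarrow$ $\gnccx$ complemented'': let $f$ be a vertex and use Theorem~\ref{tp-2.18} to pick $g\in\ccx$ with $h(g)=h(Ann(f))$. I then need to see that $g$ is itself a vertex. Since $f$ is a nonzero zero divisor, $f$ belongs to some minimal prime ideal but not to all of them (their intersection $k(\pccx)$ is $\{0\}$ because $\ccx$ is reduced), so $h(f)$ is a nonempty proper subset of $\pccx$; by Theorem~\ref{t-3.6}, $h(Ann(f))=\pccx\setminus h(f)$ is then also nonempty and proper. Hence $h(g)\neq\pccx$ forces $g\neq0$ (as $h(0)=\pccx$), while $h(g)\neq\emptyset$ puts $g$ in some minimal prime, i.e.\ makes $g$ a nonzero zero divisor. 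So $g$ is a vertex, and Theorem~\ref{t-3.10} gives that $\gnccx$ is complemented.

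Next, for ``$\gnccx$ complemented $\Rightarrow$ $\pccx$ compact'': I would verify the hypothesis of Theorem~\ref{tp-2.18} for an arbitrary $f\in\ccx$, by cases. If $f$ is a nonzero zero divisor, i.e.\ a vertex, then Theorem~\ref{t-3.10} directly provides a vertex $g$ (so in particular $g\in\ccx$) with $h(g)=h(Ann(f))$. If $f\neq0$ but $f$ is not a zero divisor (this in particular covers all units), then $f$ lies in no minimal prime, so $h(f)=\emptyset$ and therefore $h(Ann(f))=\pccx=h(0)$ by Theorem~\ref{t-3.6}; take $g=0$. If $f=0$, then $Ann(f)=\ccx$ and $h(Ann(f))=h(\ccx)=\emptyset=h(1)$; take $g=1$. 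In every case the desired $g$ exists, so Theorem~\ref{tp-2.18} yields that $\pccx$ is compact.

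I do not foresee a genuine obstacle: the real work has been absorbed into Theorems~\ref{t-3.9}, \ref{t-3.10} and \ref{tp-2.18}, and what is left is the bookkeeping of matching ``vertex'' against ``element of $\ccx$''. The one spot that demands a little care is checking, in the first implication, that the element $g$ coming out of Theorem~\ref{tp-2.18} is actually a vertex and not $0$ or a non-zero-divisor; for that, the identity $k(\pccx)=\{0\}$ (reducedness of $\ccx$) together with the clopen-complement description $h(Ann(f))=\pccx\setminus h(f)$ from Theorem~\ref{t-3.6} is exactly the tool needed.
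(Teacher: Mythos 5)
Your proof is correct and follows essentially the same route as the paper: both directions amount to combining Theorem~\ref{t-3.10} with Theorem~\ref{tp-2.18} and reconciling the quantifier over vertices with the one over all of $\ccx$. The only cosmetic differences are that you check that the $g$ supplied by Theorem~\ref{tp-2.18} is a vertex directly at the level of hulls (via Theorem~\ref{t-3.6} and $k(\pccx)=\{0\}$), whereas the paper passes back to the zero-set conditions through Theorem~\ref{t-3.9} and rules out $g=0$ there, and that you treat $f=0$ as a separate case (taking $g=1$), a case the paper's ``$f$ not a divisor of zero'' branch technically does not cover since $Ann(0)=\ccx\neq\{0\}$.
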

	\begin{proof}
		First let $ \gnccx $ be complemented. Choose $ f\in \ccx $.\\
		Case1. Let $ f $ be a divisor of zero in $ \ccx $. Then by Theorem \ref{t-3.10}, there exists $ g\in \ccx $ such that $ h(g)=h(Ann(f)) $.\\
		Case2. Let $ f $ be not a divisor of zero in $ \ccx $. Then $ Ann(f)=\{0\} $. Consequently, $ h(Ann(f))=\pccx=h(g) $ on taking $ g=0 $.
		
		Conversely, let $ \pccx $ be compact and $ f $ be a vertex in $ \gnccx $. It follows from Theorem \ref{tp-2.18} and Theorem \ref{t-3.9} that there exists $ g\in \ccx $ such that $Z(f)\cup Z(g)=X $ and int$Z(f)\ \cap  $ int$ Z(g)= \emptyset$. We assert that $ g$ is a vertex in $\gnccx $. If not, then $ g=0 $ which implies that int$Z(f)\ \cap  $ int$ Z(g)= $ int$ Z(f)\neq\emptyset $, this is a contradiction. By Theorem \ref{t-3.1}, $ \gnccx $ is complemented.
		\end{proof}
	
	Let $ \acx $ be a ring lying between the rings $ C_c^*(X)$ and $ C_c(X) $. We call $ \acx $ a typical intermediate ring in our present study. Therefore the zero-divisor graph $ \gnacx $ considered over the ring $ \acx $ in the usual manner is a subgraph of the zero-divisor graph $ \gnccx $ constructed over $ \ccx $. The following simple result will be helpful to us in the study of $ \acx $ vis-a-vis $ \ccx $.
	\begin{theorem}\label{t-3.14}
		Given $ f\in \ccx $, there exists a positive unit $ u $ in the ring $ \ccx $ such that $ uf\in C_c^*(X) $.
	\end{theorem}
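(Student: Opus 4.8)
The plan is to exhibit the unit explicitly by the classical device used for $C(X)$, namely to set $u=\dfrac{1}{1+f^2}$, and then to check that everything stays inside $C_c(X)$.

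First I would verify that $u\in\ccx$. Since $\ccx$ is a ring, $f^2\in\ccx$ and hence $1+f^2\in\ccx$; as $1+f^2$ never vanishes on $X$ (indeed $1+f^2\geq 1$), the reciprocal $u=\frac{1}{1+f^2}$ is a well-defined continuous function on $X$, and its range is contained in $\{\frac{1}{1+r^2}: r\in f(X)\}$, which is countable because $f(X)$ is. Thus $u\in\ccx$. Next, $u$ is a positive unit of $\ccx$: it is strictly positive since $u(x)=\frac{1}{1+f(x)^2}>0$ for every $x\in X$, and $u\cdot(1+f^2)=1$ with $1+f^2\in\ccx$, so $u$ is invertible in $\ccx$ with inverse $1+f^2$.

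Finally I would check that $uf\in C_c^*(X)$. Being a product of two members of the ring $\ccx$, we have $uf=\frac{f}{1+f^2}\in\ccx$. Moreover, for each $x\in X$, the elementary inequality $2|f(x)|\leq 1+f(x)^2$ gives $|uf(x)|=\frac{|f(x)|}{1+f(x)^2}\leq\frac12$, so $uf$ is bounded, i.e. $uf\in C^*(X)$. Hence $uf\in\ccx\cap C^*(X)=C_c^*(X)$, which is exactly what was required.

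There is essentially no obstacle here; the only point needing a moment's care is the bookkeeping that $1+f^2$, its reciprocal, and the product $\frac{f}{1+f^2}$ all have countable range, which is immediate from the fact that a continuous image of the countable set $f(X)$ under a fixed real function is again countable and that $\ccx$ is closed under the ring operations and under inversion of non-vanishing elements.
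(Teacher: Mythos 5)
Your proof is correct and takes essentially the same approach as the paper, which simply chooses $u=\frac{1}{1+|f|}$; your choice $u=\frac{1}{1+f^2}$ is an equally valid explicit unit, and your verification that all functions involved remain in $\ccx$ and that $uf$ is bounded is accurate.
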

	\begin{proof}
		Choose $ u=\frac{1}{1+|f|} $.
	\end{proof}
A close look into the proof of Theorem \ref{t-2.1} and Theorem \ref{t-2.4}(1) reveals that for any two vertices $ f,g\in \acx $, int$ Z(f)\ \cap $ int$ Z(g)\neq\emptyset$ if and only if $ f $ and $ g $ admit a vertex in $ \acx $ adjacent to both of $ f $ and $ g $, and $ fg=0 $ if and only if $ f $ and $ g $ are adjacent in the graph $ \gnacx$. Therefore as in the proof of Theorem \ref{t-3.1} we can write the following sharpened version of this theorem.
\begin{theorem}\label{t-3.15}
	$ \gnacx $ is a complemented graph if and only if given a vertex $ f $ in this graph, there exists a vertex $ g $ in the same graph such that $ Z(f)\cup Z(g)=X $ and int$ Z(f)\ \cap $ int$ Z(g)=\emptyset$.
\end{theorem}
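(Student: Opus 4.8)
The plan is to unwind the definition of a complemented graph in the setting of $\gnacx$ and invoke the two equivalences recorded in the paragraph immediately preceding the statement. By definition, $\gnacx$ is complemented if and only if every vertex $f$ of this graph admits an orthogonal vertex $g$, that is, a vertex $g$ with $fg=0$ for which no vertex $h$ of $\gnacx$ is adjacent to both $f$ and $g$. So the task reduces to translating the two conditions ``$fg=0$'' and ``no common neighbour in $\gnacx$'' into statements about zero-sets, exactly as was done for $\ccx$ in deriving Theorem \ref{t-3.1}.

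First I would observe that, for $f,g\in\acx$, the relation $fg=0$ holds exactly when $Z(f)\cup Z(g)=X$; this is a pointwise identity and is insensitive to which intermediate ring one works in. Second, by the remark preceding the theorem --- which simply re-runs the argument of Theorem \ref{t-2.1} inside $\acx$, the witnessing common neighbour being taken as the characteristic function $1_W$ of a suitable clopen set $W\subseteq\,$int$Z(f)\ \cap\,$int$Z(g)$, which lies in $C_c^*(X)$ and hence in every such $\acx$ --- there is a vertex $h$ of $\gnacx$ adjacent to both $f$ and $g$ if and only if int$Z(f)\ \cap\,$int$Z(g)\neq\emptyset$. Putting these together, $g$ is orthogonal to $f$ in $\gnacx$ precisely when $Z(f)\cup Z(g)=X$ and int$Z(f)\ \cap\,$int$Z(g)=\emptyset$, which is exactly the asserted criterion; hence $\gnacx$ is complemented if and only if every vertex $f$ admits such a $g$.

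The only point that requires any care --- and the reason the preceding paragraph was stated at all --- is that the common neighbour $h$ must itself be a vertex of $\gnacx$, i.e., must belong to $\acx$ rather than merely to $\ccx$. This is harmless here because the $h$ produced in the proof of Theorem \ref{t-2.1} is a characteristic function of a clopen set, hence a member of $C_c^*(X)\subseteq\acx$; so no genuine obstacle arises, and the proof is, in effect, a transcription of that of Theorem \ref{t-3.1} with $\ccx$ replaced by $\acx$.
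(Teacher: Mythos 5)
Your proposal is correct and follows essentially the same route as the paper: the paper gives no separate proof, instead noting in the preceding paragraph that the arguments of Theorem \ref{t-2.1} and Theorem \ref{t-2.4}(1) run unchanged inside $\acx$ because the witnessing common neighbour is a characteristic function of a clopen set, hence lies in $C_c^*(X)\subseteq\acx$, and then transcribing the proof of Theorem \ref{t-3.1}. You have correctly identified that this membership of the witness in $\acx$ is the only point needing care.
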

	\begin{theorem}\label{t-3.16}
		$ \gnccx $ is a complemented graph if and only if $ \gnacx $ is complemented.
	\end{theorem}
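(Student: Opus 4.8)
The plan is to avoid re-deriving anything about minimal prime ideals and instead play the two ``internal'' characterizations of complementedness against each other: $\gnccx$ is complemented exactly when every vertex $f$ has a vertex $g$ with $Z(f)\cup Z(g)=X$ and int$Z(f)\cap$ int$Z(g)=\emptyset$ (Theorem \ref{t-3.1}), and the \emph{same} sentence characterizes complementedness of $\gnacx$ by Theorem \ref{t-3.15}. Before comparing them I would record two elementary facts. First, an element $h$ of $\ccx$ (respectively of $\acx$) is a vertex of $\gnccx$ (respectively of $\gnacx$) if and only if $h\neq 0$ and int$Z(h)\neq\emptyset$: if $hk=0$ with $k\neq 0$ then continuity forces $X\setminus Z(k)\subseteq$ int$Z(h)$, so int$Z(h)\neq\emptyset$; conversely, zero-dimensionality of $X$ lets me pick a nonempty clopen $W\subseteq$ int$Z(h)$, and then $1_W\in C_c^*(X)\subseteq\acx\subseteq\ccx$ is a nonzero function with $h\cdot 1_W=0$. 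In particular the vertex set of $\gnacx$ sits inside $\vn$ and adjacency in $\gnacx$ is just the restriction of adjacency in $\gnccx$. Second, by Theorem \ref{t-3.14} every $f\in\ccx$ can be multiplied by a positive unit $u\in\ccx$ with $uf\in C_c^*(X)\subseteq\acx$; since $u$ never vanishes, $Z(uf)=Z(f)$ and hence int$Z(uf)=$ int$Z(f)$ --- so rescaling by $u$ changes neither of the two conditions appearing in Theorems \ref{t-3.1} and \ref{t-3.15}.

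For the forward implication I would assume $\gnccx$ is complemented and take a vertex $f$ of $\gnacx$. Then $f$ is a vertex of $\gnccx$, so Theorem \ref{t-3.1} supplies a vertex $g$ of $\gnccx$ with $Z(f)\cup Z(g)=X$ and int$Z(f)\cap$ int$Z(g)=\emptyset$. This $g$ may not lie in $\acx$, but picking a positive unit $u\in\ccx$ with $ug\in C_c^*(X)\subseteq\acx$ and using $Z(ug)=Z(g)$ gives $Z(f)\cup Z(ug)=X$ and int$Z(f)\cap$ int$Z(ug)=\emptyset$. Since $g\neq 0$ and $u$ is a unit, $ug\neq 0$, and $f\cdot(ug)=0$, so $ug$ is a nonzero zero-divisor of $\acx$, i.e.\ a vertex of $\gnacx$. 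Theorem \ref{t-3.15} then yields that $\gnacx$ is complemented.

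For the converse I would assume $\gnacx$ is complemented and take a vertex $f$ of $\gnccx$. By the first observation int$Z(f)\neq\emptyset$. Choosing a positive unit $u\in\ccx$ with $uf\in C_c^*(X)\subseteq\acx$, I get $uf\neq 0$ and int$Z(uf)=$ int$Z(f)\neq\emptyset$, so (again by the first observation, now inside $\acx$) $uf$ is a vertex of $\gnacx$. Theorem \ref{t-3.15} gives a vertex $g$ of $\gnacx$ with $Z(uf)\cup Z(g)=X$ and int$Z(uf)\cap$ int$Z(g)=\emptyset$; because $Z(uf)=Z(f)$ this says $Z(f)\cup Z(g)=X$ and int$Z(f)\cap$ int$Z(g)=\emptyset$. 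Finally $g$, being a nonzero zero-divisor of $\acx\subseteq\ccx$, is a vertex of $\gnccx$, so Theorem \ref{t-3.1} shows $\gnccx$ is complemented.

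The only point that needs any care --- and it is mild --- is transporting the ``witness'' vertex $g$ between the two rings, since a complement found inside one ring need not belong to the other. Theorem \ref{t-3.14} is exactly the tool that fixes this, and its harmlessness is the whole reason the argument is short: multiplying by a nowhere-vanishing unit leaves zero-sets (hence both $Z(f)\cup Z(g)=X$ and int$Z(f)\cap$ int$Z(g)=\emptyset$) untouched, and one only has to note in passing that the rescaled witness is still a genuine vertex of the target graph --- immediate from $f\cdot(ug)=0$ with $ug\neq 0$ in the forward direction, and from $g\in\acx\subseteq\ccx$ being a nonzero zero-divisor in the converse.
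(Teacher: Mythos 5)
Your proof is correct and follows essentially the same route as the paper's: both directions rest on the characterizations in Theorems \ref{t-3.1} and \ref{t-3.15} together with the rescaling trick of Theorem \ref{t-3.14}, using $Z(uf)=Z(f)$ to transport the witness between the two rings. Your extra observation that a nonzero $h$ is a vertex precisely when int$Z(h)\neq\emptyset$ just makes explicit a vertex-hood check the paper leaves implicit.
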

	\begin{proof}
		First assume that $ \gnccx $ is complemented. Let $ f $ be a vertex in $ \gnacx $. Then there exists a vertex $  g $ in $ \gnccx $ such that $ Z(f)\cup Z(g)=X $ and int$ Z(f)\ \cap $ int$ Z(g)=\emptyset$ (this follows from Theorem \ref{t-3.1}). Now we apply Theorem \ref{t-3.14}, to find out a unit $ u $ in $ \ccx $ such that $ ug\in C_c^*(X) $ and hence $ ug\in A_c(X) $. It is clear that $ ug $ is a vertex in $ \acx $ and $ Z(ug)=Z(g) $. This yields $ Z(f)\cup Z(ug)=X $ and int$ Z(f)\ \cap $ int$ Z(ug)=\emptyset$. We now apply Theorem \ref{t-3.15} to conclude that $ \gnacx $ is complemented.
		
		Conversely let $ \gnacx $ be complemented. Suppose $ f $ is a vertex in $ \gnccx
		 $. It follows from Theorem \ref{t-3.14} that there exists a unit $ u $ in $ \ccx $ such that $ uf $ is a vertex in $ \gnacx $. Hence there exists a vertex $ h\in \gnacx $ such that $ Z(uf)\cup Z(h)=X $ and int$ Z(uf)\ \cap $ int$ Z(h)=\emptyset$ (this follows from Theorem \ref{t-3.15}). We get $ Z(f)\cup Z(h)=X $ and int$ Z(f)\ \cap $ int$ Z(h)=\emptyset$ and clearly $ h $ is a vertex in $ \gnccx $. Hence from Theorem \ref{t-3.1}, we get that $ \gnccx $ is complemented.
	\end{proof}
\begin{remark}
	By closely following the above arguments we can realize that for a Tychonoff space $ X $ (not necessarily zero-dimensional) and any ring $ A(X) $ lying between $ C^*(X) $ and $ C(X) $, $ \Gamma (C(X)) $ is complemented if and only if $ \Gamma(A(X)) $ is complemented.
\end{remark}
	Before proceding further we reproduce the following proposition (Theorem 5.1 in \cite{ref9}), which we will need to prove a theorem concerning the minimal prime ideals $ \pacx $ of a typical intermediate ring $ \acx $ lying between $ C_c^*(X) $ and $ C_c(X) $.
	\begin{theorem}\label{t-3.17}
		Let $ {B} $ be a commutative reduced ring and $ A $ a subring of it with the following condition: for each $ b\in B $, there exists $ a_b \in A$ and $ u_b\in B $ such that $ b=a_bu_b $ and Ann$(u_b)=\{0\}  $. Then the spaces $ \mathcal{P}(B) $ and $\mathcal{P}(A)$ of minimal prime ideals of the rings $ B $ and $ A $ become homeomorphic under the map: \begin{alignat*}{2}
&  \mathcal{P}(B)\rightarrow \mathcal{P}(A)\\
&\ P\mapsto P\cap A
		\end{alignat*}
	\end{theorem}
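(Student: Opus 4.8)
The plan is to show directly that the contraction map $\phi\colon\mathcal P(B)\to\mathcal P(A)$, $P\mapsto P\cap A$, is a well-defined bijection that is simultaneously continuous and open, and hence a homeomorphism. Two elementary facts about a reduced commutative ring $R$ will be used repeatedly: (i) a prime ideal $P$ of $R$ is minimal exactly when for each $a\in P$ there exists $s\in R\setminus P$ with $as=0$; and (ii) as a consequence, every element of a minimal prime of $R$ is a zero-divisor, so every non-zero-divisor of $R$ lies outside every minimal prime. Note at the outset that $A$, being a subring of the reduced ring $B$, is itself reduced, and that the hypothesis asserts precisely that each $u_b$ is a non-zero-divisor of $B$.

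First I would check that $\phi$ is well defined, i.e.\ that $P\cap A\in\mathcal P(A)$ whenever $P\in\mathcal P(B)$. That $P\cap A$ is prime in $A$ is routine; for minimality I would use criterion (i): given $a\in P\cap A$, minimality of $P$ in $B$ provides $c\in B\setminus P$ with $ac=0$, and writing $c=a_cu_c$ with $a_c\in A$ and $\mathrm{Ann}(u_c)=\{0\}$, the relation $a\,a_c\,u_c=ac=0$ together with $u_c$ being a non-zero-divisor forces $a\,a_c=0$; moreover $a_c\notin P$ (otherwise $c=a_cu_c\in P$), so $a_c\in A\setminus(P\cap A)$. Thus $P\cap A$ satisfies criterion (i) and is a minimal prime of $A$. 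I expect this descent of an annihilator relation from $B$ to $A$ to be the crux of the whole argument, since it is the one place where the factorization hypothesis does genuine work; the remaining steps become comparatively mechanical once it, and fact (ii), are available.

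For injectivity, suppose $P_1\cap A=P_2\cap A$ with $P_1,P_2\in\mathcal P(B)$; for any $b\in P_1$, writing $b=a_bu_b$ and using fact (ii) to get $u_b\notin P_1$, we obtain $a_b\in P_1\cap A=P_2\cap A\subseteq P_2$, hence $b=a_bu_b\in P_2$, so $P_1\subseteq P_2$ and symmetry gives equality. For surjectivity, given $Q\in\mathcal P(A)$ the set $A\setminus Q$ is multiplicatively closed in $B$ and misses $0$, so some prime of $B$ is disjoint from it; thus the family $\mathcal F=\{\mathfrak p\in\mathrm{Spec}(B):\mathfrak p\cap A\subseteq Q\}$ is nonempty, and since the intersection of a descending chain of primes is again prime and still lies in $\mathcal F$, Zorn's Lemma yields a member $P$ minimal in $\mathcal F$. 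Any prime of $B$ strictly inside $P$ would again lie in $\mathcal F$, so $P$ is in fact a minimal prime of $B$; by the previous paragraph $P\cap A$ is a minimal prime of $A$ contained in $Q$, and since distinct minimal primes are incomparable we conclude $P\cap A=Q$.

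It remains to treat the topology. Writing $h_R(r)=\{P:r\in P\}$ and $U_R(r)=\mathcal P(R)\setminus h_R(r)$ for a reduced ring $R$ and $r\in R$, the sets $h_R(r)$ form a base for the closed sets of $\mathcal P(R)$. For $a\in A$ one has $\phi^{-1}(h_A(a))=h_B(a)$, so $\phi$ is continuous. For openness, given $b\in B$ write $b=a_bu_b$; since $u_b$ avoids every minimal prime of $B$ by fact (ii) we get $U_B(b)=U_B(a_b)$, and using surjectivity together with the observation that an element of $A$ lies in $P$ iff it lies in $P\cap A$, one verifies that $\phi(U_B(a_b))=U_A(a_b)$, a basic open set of $\mathcal P(A)$. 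Hence $\phi$ sends a base of open sets to open sets. A continuous open bijection is a homeomorphism, which finishes the proof.
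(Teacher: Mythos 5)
Your proof is correct. Note that the paper supplies no argument for this statement at all: it is reproduced verbatim as Theorem 5.1 of \cite{ref9} (Henriksen--Jerison), so there is no internal proof to compare against. Your argument is essentially the standard one from that source: the annihilator criterion for minimal primes of a reduced ring does the work in well-definedness and surjectivity, the non-zero-divisor factorization $b=a_bu_b$ does the work in injectivity and in showing $U_B(b)=U_B(a_b)$ maps onto $U_A(a_b)$, and continuity is immediate on the hull basis; all of these steps are sound as written.
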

We use this theorem to prove the next result:
\begin{theorem}\label{t-3.18}
	The space $ \pccx $ of all minimal prime ideals in $ \ccx $ is homeomorphic to the space $ \pacx $ of all minimal prime ideals in $ \acx $.\
	
\end{theorem}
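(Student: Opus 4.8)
The plan is to invoke Theorem \ref{t-3.17} with $B=\ccx$ and $A=\acx$. Two of the three hypotheses are immediate: $\acx$ is by definition a ring lying between $C_c^*(X)$ and $\ccx$, hence a subring of $\ccx$; and $\ccx$ is reduced, since $f^2=0$ forces $f(x)=0$ for every $x\in X$, i.e. $f=0$. So the only thing that needs verification is the factorization condition of Theorem \ref{t-3.17}: for each $b\in\ccx$ there should exist $a_b\in\acx$ and $u_b\in\ccx$ with $b=a_bu_b$ and $Ann(u_b)=\{0\}$.

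To produce this factorization, given $b\in\ccx$ apply Theorem \ref{t-3.14} to obtain a positive unit $u$ of $\ccx$ with $ub\in C_c^*(X)$. Put $a_b=ub$ and $u_b=u^{-1}$. Since $u$ is a unit of $\ccx$, so is $u_b=u^{-1}$; in particular $u_b\in\ccx$ and $Ann(u_b)=\{0\}$, because a unit annihilates nothing but $0$. Moreover $a_b=ub\in C_c^*(X)\subseteq\acx$, and $a_bu_b=ub\cdot u^{-1}=b$. This is exactly the hypothesis required.

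Theorem \ref{t-3.17} then gives that the map $P\mapsto P\cap\acx$ is a homeomorphism from $\pccx$ onto $\pacx$, which is the assertion of the theorem.

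The only point worth flagging is the bookkeeping in matching Theorem \ref{t-3.14} to the precise shape demanded by Theorem \ref{t-3.17}: one must take $u_b=u^{-1}$ rather than $u$ itself, and observe that the inverse of a unit of $\ccx$ is again an element of $\ccx$ and has trivial annihilator. Beyond that there is no genuine obstacle.
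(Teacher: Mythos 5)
Your proof is correct and follows essentially the same route as the paper: the paper likewise applies Theorem \ref{t-3.14} to write $f = (uf)\cdot(1+|f|)$ with $uf\in A_c(X)$ and $\mathrm{Ann}(1+|f|)=\{0\}$, i.e.\ it takes $u_b = u^{-1} = 1+|f|$ exactly as you do, and then invokes Theorem \ref{t-3.17}. No gap.
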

	\begin{proof}
		It follows from Theorem \ref{t-3.14} that given $ f\in \ccx $, the function u=$\frac{1}{1+|f|}$ is a unit in $ \ccx $ for which $ uf \in \acx$. Consequently, we can write $ f=uf(1+|f|) $. We note that Ann$ (1+|f|)=\{0\} $. A straight way application of Theorem \ref{t-3.17} now yields that $ \pacx $ is homeomorphic to $ \pccx $.

	\end{proof}
	On combining Theorem \ref{t-3.13}, Theorem \ref{t-3.16} and Theorem \ref{t-3.18}, we get the following proposition.
	
	\begin{theorem}\label{t-3.19}
		The zero-divisor graph $ \gnacx $ of the intermediate ring $ \acx $ is complemented if and only if the space $ \pacx $ is compact.
	\end{theorem}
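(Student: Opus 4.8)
The plan is to obtain the result purely by chaining together the three preceding theorems, each of which supplies exactly one of the needed links. First I would invoke Theorem \ref{t-3.16}, which asserts that $\gnccx$ is complemented if and only if $\gnacx$ is complemented; this transfers the question about the intermediate ring $\acx$ to the same question about $\ccx$. Next I would apply Theorem \ref{t-3.13}, which characterizes complementedness of $\gnccx$ as compactness of the space $\pccx$ of minimal prime ideals of $\ccx$. Finally, Theorem \ref{t-3.18} furnishes a homeomorphism $\pccx\to\pacx$, $P\mapsto P\cap\acx$; since compactness is preserved by homeomorphisms, $\pccx$ is compact precisely when $\pacx$ is compact.

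Putting the three equivalences in sequence, $\gnacx$ is complemented $\iff$ $\gnccx$ is complemented $\iff$ $\pccx$ is compact $\iff$ $\pacx$ is compact, which is exactly the assertion of the theorem. There is no real obstacle at this stage: all the substantive work has already been done in Theorems \ref{t-3.13}, \ref{t-3.16} and \ref{t-3.18}, so the present statement is merely their formal composition. The only point worth a word of care is that the reduction via Theorem \ref{t-3.16} must be legitimate for \emph{every} vertex --- i.e. when a vertex $f$ of $\gnccx$ is multiplied by the positive unit $u=1/(1+|f|)$ of Theorem \ref{t-3.14} so that $uf$ lands in $\acx$, one needs $uf$ to remain a genuine vertex with $Z(uf)=Z(f)$ --- but this is already taken care of inside the proof of Theorem \ref{t-3.16} and requires no repetition here.

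If one preferred a self-contained argument avoiding an explicit appeal to Theorem \ref{t-3.16}, one could instead combine the sharpened complementedness criterion of Theorem \ref{t-3.15} with the annihilator characterization of compactness of $\pccx$ in Theorem \ref{tp-2.18}, the zero-set translation in Theorem \ref{t-3.9}, and the homeomorphism of Theorem \ref{t-3.18}; but this only re-expands the proofs of the cited results and yields nothing genuinely new, so I would present the short composite proof.
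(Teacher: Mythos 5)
Your proof is correct and is exactly the paper's argument: the paper derives Theorem \ref{t-3.19} by combining Theorem \ref{t-3.13}, Theorem \ref{t-3.16} and Theorem \ref{t-3.18} in precisely the chain of equivalences you describe. Nothing further is needed.
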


\begin{remark}
	On making some modifications in the above chain of arguments needed to prove Theorem \ref{t-3.19} and taking into consideration Corollary 2.5 in \cite{ref4} the following fact comes out for a Tychonoff space $ X $ not necessaryly zero-dimensional: the zero-dimensional graph $ \Gamma(A(X)) $ of a ring $ A(X) $ lying between $ C^*(X) $ and $ C(X) $ is complemented if and only if the sapce $ \mathcal{P}(A(X)) $ is compact. 
\end{remark}

	It is already remarked in \cite{ref4}, vide the comments proceeding Corollary 2.5 in \cite{ref4} that for a Tychonoff space $ X $, $ \Gamma(C(X)) $ is complemented if and only if for every vertex $ f $ in $ C(X) $ there exists a vertex $ g $ in $ C(X) $ with $ Z(f)\cup Z(g)=X $ and int$ Z(f)\ \cap $ int$ Z(g)=\emptyset$. Incidentally, the countable counterpart of this result on replacing $ C(X) $ by $ C_c(X) $ with $ X $, a zero-dimensional space is proved in Theorem \ref{t-3.1} of the present article. Since for a strongly zero-dimensional space $ X $, i.e., for which $ \beta X $ is zero-dimensional, $ \{Z(f):f\in C_c(X)\}=\{Z(f):f\in C(X)\}  $ (Theorem 2.4, \cite{ref3}), we therefore get the following result.
	\begin{theorem}\label{t-3.20}
		For a strongly zero-dimensional space $ X $, the zero-divisor graph $ \gnccx $ is complemented if and only if the zero-divisor graph $ \Gamma(C(X)) $ is complemented.
	\end{theorem}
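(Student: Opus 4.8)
The plan is to derive Theorem \ref{t-3.20} by combining two facts already in place: the internal characterization of complementedness for $\gnccx$ (Theorem \ref{t-3.1}), the analogous characterization for $\Gamma(C(X))$ quoted from \cite{ref4} (namely that $\Gamma(C(X))$ is complemented iff every vertex $f$ in $C(X)$ admits a vertex $g$ with $Z(f)\cup Z(g)=X$ and $\operatorname{int}Z(f)\cap\operatorname{int}Z(g)=\emptyset$), together with the coincidence of zero-set families $\{Z(f):f\in C_c(X)\}=\{Z(f):f\in C(X)\}$ for a strongly zero-dimensional $X$ (Theorem 2.4 of \cite{ref3}). The point is that both characterizations are phrased purely in terms of zero-sets and their interiors, so once the zero-set families agree the two conditions become literally the same condition.

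First I would fix a strongly zero-dimensional space $X$ and record the set equality $\mathcal{Z}_c := \{Z(f):f\in C_c(X)\} = \{Z(f):f\in C(X)\} =: \mathcal{Z}$. Then I would argue each implication. Suppose $\gnccx$ is complemented. Let $f\in C(X)$ be a vertex in $\Gamma(C(X))$, i.e. a nonzero zero-divisor; then $\operatorname{int}Z(f)\neq\emptyset$ and $Z(f)\neq X$. By the set equality there is $f_0\in C_c(X)$ with $Z(f_0)=Z(f)$, and $f_0$ is likewise a nonzero zero-divisor, hence a vertex of $\gnccx$. Applying Theorem \ref{t-3.1} to $f_0$ produces $g_0\in C_c(X)$, a vertex of $\gnccx$, with $Z(f_0)\cup Z(g_0)=X$ and $\operatorname{int}Z(f_0)\cap\operatorname{int}Z(g_0)=\emptyset$; since $C_c(X)\subseteq C(X)$, the function $g_0$ is a vertex in $\Gamma(C(X))$ as well, and translating back via $Z(f_0)=Z(f)$ gives $Z(f)\cup Z(g_0)=X$ and $\operatorname{int}Z(f)\cap\operatorname{int}Z(g_0)=\emptyset$. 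By the cited characterization from \cite{ref4}, $\Gamma(C(X))$ is complemented. The converse is symmetric: given a vertex $f\in C_c(X)$ of $\gnccx$, view it as a vertex of $\Gamma(C(X))$, obtain $g\in C(X)$ with the required zero-set conditions, then replace $g$ by some $g_0\in C_c(X)$ with $Z(g_0)=Z(g)$ (again using the set equality), note $g_0$ is a vertex of $\gnccx$ since $\operatorname{int}Z(g_0)\cap\operatorname{int}Z(f)=\emptyset$ forces $g_0\neq 0$, and invoke Theorem \ref{t-3.1} to conclude.

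There is essentially no hard obstacle here; the only points that need a word of care are that the ``vertex'' condition is preserved under replacing a function by another with the same zero-set (a nonzero zero-divisor is exactly a function $f$ with $\operatorname{int}Z(f)\neq\emptyset$ and $f\neq 0$, both depending only on $Z(f)$), and that one must check $g_0\neq 0$ in each direction so that $g_0$ genuinely qualifies as a vertex — which is automatic since $Z(g_0)=X$ would contradict $\operatorname{int}Z(f)\cap\operatorname{int}Z(g_0)=\emptyset$ together with $\operatorname{int}Z(f)\neq\emptyset$. I would present the argument compactly, citing Theorem \ref{t-3.1}, the remark following Corollary 2.5 in \cite{ref4}, and Theorem 2.4 of \cite{ref3}, rather than redoing any zero-set manipulations.
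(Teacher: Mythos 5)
Your proposal is correct and follows exactly the route the paper takes: the paper derives Theorem \ref{t-3.20} by combining the zero-set characterization of complementedness for $\Gamma(C(X))$ from \cite{ref4}, its countable counterpart in Theorem \ref{t-3.1}, and the equality $\{Z(f):f\in C_c(X)\}=\{Z(f):f\in C(X)\}$ for strongly zero-dimensional $X$ from Theorem 2.4 of \cite{ref3}. Your write-up merely makes explicit the routine verifications (preservation of the vertex property under replacing a function by one with the same zero-set, and $g_0\neq 0$) that the paper leaves implicit.
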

	
	On combining Theorem \ref{t-3.13} of the present article and Corollary 2.5 in \cite{ref4}, the following fact therefore comes out.
	\begin{theorem}
		For a strongly zero-dimensional space $X$, the following two statements are equivalent.
	\begin{enumerate}\label{t-3.21}
		\item 	The space $ \pccx $ of minimal prime ideals in $ \ccx $ is compact.\item The space $ \mathcal{P}(C(X)) $ of minimal prime ideals in $ C(X) $ is compact.
	\end{enumerate}
	\end{theorem}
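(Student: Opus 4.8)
The plan is to route this purely topological equivalence through the graph-theoretic characterisations already in hand, so that no direct comparison of the two spaces of minimal prime ideals is needed. Concretely, I would invoke three facts in succession: Theorem \ref{t-3.13}, which says that $\gnccx$ is a complemented graph precisely when $\pccx$ is compact; Corollary 2.5 of \cite{ref4}, which is the analogous statement for $C(X)$, namely that $\Gamma(C(X))$ is complemented precisely when $\mathcal{P}(C(X))$ is compact; and Theorem \ref{t-3.20}, which asserts that for a strongly zero-dimensional $X$ the graph $\gnccx$ is complemented if and only if $\Gamma(C(X))$ is complemented.

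Stringing these together gives the chain: $\pccx$ is compact $\Leftrightarrow$ $\gnccx$ is complemented (Theorem \ref{t-3.13}) $\Leftrightarrow$ $\Gamma(C(X))$ is complemented (Theorem \ref{t-3.20}) $\Leftrightarrow$ $\mathcal{P}(C(X))$ is compact (Corollary 2.5 of \cite{ref4}). This yields the equivalence of statements (1) and (2) at once, with nothing left to compute. I would write the proof as exactly this displayed chain of biconditionals followed by the conclusion.

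The only place where any mathematical content is actually consumed is Theorem \ref{t-3.20}, whose essential input is that strong zero-dimensionality forces $\{Z(f):f\in C_c(X)\}=\{Z(f):f\in C(X)\}$ (Theorem 2.4 of \cite{ref3}); combined with the zero-set reformulation of complementedness (Theorem \ref{t-3.1} and its $C(X)$-counterpart), this makes the middle equivalence work. Consequently there is no real obstacle in the present proof — the substantive step has been isolated into Theorem \ref{t-3.20}, and what remains here is merely to observe that the two outer equivalences and the middle one compose. The one point I would make sure to state explicitly is that both Theorem \ref{t-3.13} and Corollary 2.5 of \cite{ref4} are genuine if-and-only-if statements, so that the composite is an equivalence rather than a one-directional implication.
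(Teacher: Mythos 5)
Your proposal is correct and is essentially the paper's own argument: the authors obtain the theorem by combining Theorem \ref{t-3.13}, Theorem \ref{t-3.20}, and Corollary 2.5 of \cite{ref4} in exactly the chain of biconditionals you describe. Nothing further is needed.
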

	Now let us examine what happens to the space $ \pccx $ and $ \mathcal{P}(C(X)) $ with the choice $ X= $ a $ P $-space. With such a special choice of $ X $, it follows from Proposition 5.3 in \cite{KGN} that $ \ccx $ is a Von-Neumann regular ring and we get from $ 4J $ in \cite{ref8} that $ C(X) $ is also a Von-Neumann regular ring. Consequently, $ \pccx $ coincides with the set $ \mathcal{M}_c(X) $ of all maximal ideals in $ \ccx $ with hull-kernel topology and therefore $ \pccx=\beta_0X $, the Banaschewski compactification of $ X $ \cite{ref3}. By an identical reasoning $ \mathcal{P}(C(X))=\beta X  $, the Stone-$\check{C}$ech compactification of $ X $. Since $ X $ is strongly zero-dimensional as it is a $ P $-space, it follows that $ \beta X=\beta_0X $.
	
	Thus for this special choice of $ X $ viz. that $ X $ is a $ P $-space we can say that $ \pccx $ and $ \mathcal{P}(C(X)) $ are homeomorphic  spaces. We feel it therefore natural to ask the following question.
	\begin{question}
		Are the two spaces $ \pccx $ and $ \mathcal{P}(C(X)) $ homeomorphic for an arbitrary strongly zero-dimensional space $ X $?
	\end{question}
	\section{Comaximal graph $ \gccx $ associated with the ring $ \ccx $.}
We recall that the vertices of the graph $ \gccx $ are just the nonzero non-units in the ring $ \ccx $. Thus an $ f\neq0 $ in $ \ccx $ is a vertex of this graph if and only if  $ Z(f)\neq\emptyset $. Let $ V_2(\ccx) $ stand for the set of vertices in this graph. Two functions $ f $ and $ g $ in $ \vtccx $ are adjacent if and only if $ <f>+<g>=C(X) $, here $ <f> $ is the principal ideal in $ \ccx $ generated by $ f $. Surely $ f $ and $ g $ are adjacent when and only when $ Z(f)\cap Z(g)=\emptyset $.
\begin{theorem}\label{t-4.1}
	The graph $ \gccx $ is connected.
\end{theorem}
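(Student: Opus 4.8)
The plan is to prove the stronger statement that the diameter of $\gccx$ is at most $3$, by exhibiting for any two vertices $f,g\in\vtccx$ an explicit walk of length $\le 3$ built out of characteristic functions of clopen sets. Recall that in this graph $f\sim g$ precisely when $Z(f)\cap Z(g)=\emptyset$, and that a nonzero $f\in\ccx$ is a vertex exactly when $Z(f)\neq\emptyset$. I would split into two cases according to whether or not $Z(f)\cup Z(g)=X$.

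If $Z(f)\cup Z(g)\neq X$, pick $p\in X\setminus(Z(f)\cup Z(g))$ and, invoking zero-dimensionality, a clopen set $W$ with $p\in W\subseteq X\setminus(Z(f)\cup Z(g))$; put $h=1_{X\setminus W}\in\ccx$. Then $Z(h)=W$ is nonempty and disjoint from both $Z(f)$ and $Z(g)$, while $X\setminus W\supseteq Z(f)\neq\emptyset$, so $h$ is a genuine vertex adjacent to both $f$ and $g$, giving the path $f\sim h\sim g$.

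If instead $Z(f)\cup Z(g)=X$, I would use $f\neq 0\neq g$ to choose $x_0$ with $f(x_0)\neq 0$ and $y_0$ with $g(y_0)\neq 0$; the open sets $X\setminus Z(f)\ni x_0$ and $X\setminus Z(g)\ni y_0$ are then disjoint, since their intersection equals $X\setminus(Z(f)\cup Z(g))=\emptyset$. By zero-dimensionality pick clopen sets $W,V$ with $x_0\in W\subseteq X\setminus Z(f)$ and $y_0\in V\subseteq X\setminus Z(g)$, and set $h=1_{X\setminus W}$, $k=1_{X\setminus V}$. One checks $Z(f)\cap Z(h)=Z(f)\cap W=\emptyset$, $Z(g)\cap Z(k)=Z(g)\cap V=\emptyset$, and $Z(h)\cap Z(k)=W\cap V\subseteq(X\setminus Z(f))\cap(X\setminus Z(g))=\emptyset$; since moreover $Z(h)=W\neq\emptyset$ and $X\setminus W\supseteq Z(f)\neq\emptyset$ (and symmetrically for $k$), both $h$ and $k$ are vertices, and we obtain the path $f\sim h\sim k\sim g$.

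The only points needing care — and the closest thing to an obstacle — are verifying that the auxiliary functions really are nonzero non-units, which is exactly where the hypotheses $Z(f),Z(g)\neq\emptyset$ enter, and noticing that when $Z(f)\cup Z(g)=X$ the two chosen clopen neighbourhoods are automatically disjoint, which is what makes the length-$3$ path go through. Both become routine once the case split is in place.
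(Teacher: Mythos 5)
Your argument is correct, and it proves more than the statement asks: you obtain $\mathrm{diam}\,\gccx\le 3$ directly, with explicit paths. The paper's own proof is much more modest in scope: it only shows that an arbitrary vertex $f$ has at least one neighbour, namely $1_W$ for a clopen set $W$ with $Z(f)\subseteq W$ and $W\neq X$ (so $Z(f)\cap Z(1_W)=\emptyset$), and leaves the actual linking of arbitrary pairs of vertices to the distance formulas of Theorem \ref{t-4.2}. Your route is therefore genuinely different and self-contained: the case split on whether $Z(f)\cup Z(g)=X$ is exactly the dichotomy underlying Theorem \ref{t-4.2}(2)--(3), and your construction of $h=1_{X\setminus W}$ and $k=1_{X\setminus V}$ supplies the ``$\le$'' half of that lemma for free; the key observation that $W\cap V\subseteq(X\setminus Z(f))\cap(X\setminus Z(g))=\emptyset$ forces $h\sim k$ is precisely what makes the length-$3$ path work, and mirrors the argument the paper uses for the zero-divisor graph in Theorem \ref{t-2.2}. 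The only cosmetic point worth noting is that your auxiliary vertices $h,k$ could in degenerate cases coincide with $f$ or $g$, but this only shortens the path, so connectedness is unaffected. In short: the paper's proof buys brevity at the cost of deferring the real work; yours buys a complete, quantitative statement in one pass.
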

	\begin{proof}
		Let $ f\in \vtccx $. Then there exist $ x,y\in X $ such that $ f(x)=0 $ and $ f(y)\neq 0 $. Since $ X $ is zero dimensional, there exists a clopen set $ W $ in $ X $ such that $ Z(f)\subseteq W $ and $ y\notin W $. The characteristic function $ 1_W \in \vtccx$ and $ Z(f)\cap Z(1_W) =\emptyset$. This shows that $ 1_W $ and $ f $ are adjacent vertices in this graph.
	\end{proof}
	\begin{theorem}\label{t-4.2}
		Let $ f,g\in \vtccx $, then\begin{enumerate}
			\item $ d(f,g)=1 $ if and only if $ Z(f)\cap Z(g)=\emptyset $.
	\item $ d(f,g)=2 $ if and only if $ Z(f)\cap Z(g)\neq\emptyset $ and $ Z(f)\cup Z(g)\neq X$.
	\item $ d(f,g)=3 $ if and only if $ Z(f)\cap Z(g)\neq\emptyset $ and $ Z(f)\cup Z(g)= X $.
	\end{enumerate}
	\end{theorem}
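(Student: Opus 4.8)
The plan is to dispatch the three clauses in order, using throughout the remark (made just before the theorem) that two vertices $f,g$ of $\gccx$ are adjacent exactly when $Z(f)\cap Z(g)=\emptyset$, together with the fact that a vertex of $\gccx$ is precisely an $h\in\ccx$ with $h\neq 0$ and $Z(h)\neq\emptyset$; as is tacit throughout, $f$ and $g$ are taken to be distinct. Clause (1) then requires no work, being just the definition of adjacency.

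For clause (2), I would first note that, by (1), the condition $Z(f)\cap Z(g)\neq\emptyset$ is equivalent to $d(f,g)\geq 2$, so what remains is to show that $f$ and $g$ have a common neighbour if and only if $Z(f)\cup Z(g)\neq X$. A common neighbour $h$ is a vertex, hence $\emptyset\neq Z(h)\subseteq X\setminus(Z(f)\cup Z(g))$, which forces $Z(f)\cup Z(g)\neq X$. Conversely, if $Z(f)\cup Z(g)\neq X$, I pick a point outside $Z(f)\cup Z(g)$ and, by zero-dimensionality, a clopen set $W$ containing that point and disjoint from $Z(f)\cup Z(g)$; then $h:=1_{X\setminus W}\in\ccx$ satisfies $Z(h)=W\neq\emptyset$ (so $h$ is a non-unit) and $X\setminus W\supseteq Z(f)\neq\emptyset$ (so $h\neq 0$), hence is a vertex, and $Z(h)$ meets neither $Z(f)$ nor $Z(g)$. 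Since $Z(f)$ and $Z(g)$ are nonempty and disjoint from $Z(h)$, $h$ differs from both, so $f\sim h\sim g$ is a genuine path and $d(f,g)=2$.

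For clause (3), the "only if" direction is purely formal: if $d(f,g)=3$, then $d(f,g)\neq 1$ gives $Z(f)\cap Z(g)\neq\emptyset$ by (1), and then $d(f,g)\neq 2$ together with (2) gives $Z(f)\cup Z(g)=X$. For the "if" direction, assume $Z(f)\cap Z(g)\neq\emptyset$ and $Z(f)\cup Z(g)=X$; by (1) and (2), $d(f,g)\notin\{1,2\}$, so it suffices to build a path of length three. Mimicking the proof of Theorem \ref{t-4.1}, I would choose a clopen set $W_1$ with $Z(f)\subseteq W_1\subsetneq X$ and a clopen set $W_2$ with $Z(g)\subseteq W_2\subsetneq X$, and set $h=1_{W_1}$, $k=1_{W_2}$, which are vertices with $h\sim f$ and $k\sim g$. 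Then $Z(h)=X\setminus W_1\subseteq X\setminus Z(f)$ and $Z(k)=X\setminus W_2\subseteq X\setminus Z(g)$; since $Z(f)\cup Z(g)=X$ forces $X\setminus Z(g)\subseteq Z(f)$, we get $Z(k)\subseteq Z(f)$, hence $Z(h)\cap Z(k)\subseteq(X\setminus Z(f))\cap Z(f)=\emptyset$, so $h\sim k$ and $f\sim h\sim k\sim g$ is the required path.

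The steps I would verify only perfunctorily are that characteristic functions of clopen sets lie in $\ccx$, and that each $1_W$ produced above is a genuine vertex (neither $0$ nor a unit). The one genuinely topological input — and the point most worth care — is the separation of a zero-set $Z(f)$ of a function in $\ccx$ from a prescribed point at which $f$ does not vanish by a clopen set; this is exactly the move already used inside Theorem \ref{t-4.1}, and it rests on the countability of the range of $f$ (choose a real value not attained by $f$ strictly between $0$ and the given nonzero value, and take the corresponding clopen sublevel set). I foresee no real obstacle; the only thing demanding attention is keeping straight the inclusions among $Z(f)$, $Z(g)$, $Z(h)$, $Z(k)$, which is precisely what makes the length-three path in clause (3) close up.
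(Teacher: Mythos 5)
Your proof is correct and complete; the paper itself omits the argument, stating only that it is analogous to Lemma 2.1 of \cite{ref6}, and your construction of the required common neighbours and length-three path via characteristic functions of clopen sets is exactly the adaptation the authors intend (it is the same device used in their proofs of Theorems \ref{t-4.1}, \ref{t-4.3} and \ref{t-4.4}). One minor remark: the clopen separation of $Z(f)$ from a point $y\notin Z(f)$ does not actually need the countability of the range of $f$ --- since $Z(f)$ is closed, zero-dimensionality gives a clopen $U$ with $y\in U\subseteq X\setminus Z(f)$, and $W=X\setminus U$ already works.
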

	The proof of this theorem is analogous to that of Lemma 2.1 in \cite{ref6} and is therefore omitted.
	\begin{theorem}\label{t-4.3}
		Diameter of $ \gccx=3 $ if and only if $ X $ contains at least $ 3 $ distinct points.
	\end{theorem}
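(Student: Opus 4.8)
The plan is to reduce everything to Theorem~\ref{t-4.2}. Since $\gccx$ is connected (Theorem~\ref{t-4.1}) and, by Theorem~\ref{t-4.2}, $d(f,g)\in\{1,2,3\}$ for all $f,g\in\vtccx$, the diameter of $\gccx$ is automatically at most $3$ whenever the graph has a vertex. Hence ``diameter $=3$'' is equivalent to the existence of a \emph{single} pair of vertices $f,g$ with $d(f,g)=3$, which by Theorem~\ref{t-4.2}(3) means $Z(f)\cup Z(g)=X$ and $Z(f)\cap Z(g)\neq\emptyset$. So the whole statement collapses to: there exist $f,g\in\vtccx$ with $Z(f)\cup Z(g)=X$ and $Z(f)\cap Z(g)\neq\emptyset$ if and only if $|X|\geq 3$.

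For the ``if'' direction, suppose $p,q,r$ are three distinct points of $X$. Using that $X$ is Hausdorff and zero-dimensional (exactly as clopen separations are produced in the proof of Theorem~\ref{t-2.5}), I would first take a clopen set $A$ with $q,r\in A$ and $p\notin A$ (the union of a clopen set separating $q$ from $p$ and one separating $r$ from $p$), and then a clopen set $C$ with $r\in C\subseteq A$ and $q\notin C$ (intersect $A$ with a clopen set separating $r$ from $q$). Put $B=(X\setminus A)\cup C$. Then $A$ and $B$ are clopen, $A\cup B=X$, $A\cap B=C\ni r$, while $A\neq X$ (as $p\notin A$) and $B\neq X$ (as $q\in A$ and $q\notin C$, so $q\notin B$). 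Consequently $f=1_{X\setminus A}$ and $g=1_{X\setminus B}$ lie in $\ccx$, are nonzero non-units with $Z(f)=A$ and $Z(g)=B$, hence $f,g\in\vtccx$, and Theorem~\ref{t-4.2}(3) gives $d(f,g)=3$; together with the first paragraph this forces the diameter to be $3$.

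For the ``only if'' direction, assume the diameter equals $3$. Then the supremum defining the diameter is attained, so there are $f,g\in\vtccx$ with $d(f,g)=3$, and by Theorem~\ref{t-4.2}(3), $Z(f)\cup Z(g)=X$ and $Z(f)\cap Z(g)\neq\emptyset$. Being non-units, $f$ and $g$ have nonempty zero-sets; being nonzero, neither zero-set is all of $X$. Pick $r\in Z(f)\cap Z(g)$, a point $p\notin Z(f)$, and a point $q\notin Z(g)$. Then $p\neq r$ and $q\neq r$; moreover $Z(f)\cup Z(g)=X$ and $q\notin Z(g)$ force $q\in Z(f)$, so $p\neq q$. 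Thus $X$ contains the three distinct points $p,q,r$.

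The only step needing any care is the explicit construction of the clopen sets $A$ and $B$ in the ``if'' direction for a general zero-dimensional Hausdorff $X$; for, say, a three-point discrete space one could simply take $Z(f)=\{p,q\}$ and $Z(g)=\{q,r\}$, but in general one must manufacture such a ``covering-but-overlapping'' pair of clopen sets, and the construction above does this. I would also remark, to make the picture concrete, that the ``only if'' direction already covers the small cases: if $|X|=1$ then $\ccx\cong\R$ has no nonzero non-units and $\gccx$ has no vertices, and if $|X|=2$ then every vertex has a singleton zero-set, so by Theorem~\ref{t-4.2} any two vertices are at distance $1$ or $2$ and the diameter is $2$.
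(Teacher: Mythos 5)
Your proof is correct, and the substantive (``if'') direction is essentially the paper's own argument: both proofs manufacture two clopen sets that cover $X$, meet nontrivially, and are each proper, then apply Theorem~\ref{t-4.2}(3) to their complementary characteristic functions; your $A$ and $B=(X\setminus A)\cup C$ play exactly the roles of the paper's $K$ and $L$. Where you genuinely diverge is the converse. The paper proves the contrapositive by exhaustively analysing the cases $|X|=1$ (empty graph) and $|X|=2$ (identifying $\ccx$ with $\R\times\R$ and checking that every pair of vertices is at distance $1$ or $2$, so the diameter is $2$), whereas you argue directly: a distance-$3$ pair $f,g$ satisfies $Z(f)\cup Z(g)=X$ and $Z(f)\cap Z(g)\neq\emptyset$ with both zero-sets proper and nonempty, from which three distinct points $p,q,r$ are extracted. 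Your version is shorter and avoids any computation in $\R\times\R$; the paper's version yields the extra information that the diameter is exactly $2$ when $|X|=2$ (a fact you recover only in your closing remark). Both are complete; no gap.
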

	\begin{proof}
		First assume that $ X $ contains at least $ 3 $ distinct members $ x,y,z $. Then by the zero-dimensionality of $ X $, there exists a clopen set $ K $ in $ X $ such that $ \{x,y\}\subseteq K $ and $ z\notin K $. We use the zero-dimensionality of $ X $ once again to produce a clopen set $ L $ in $ X $ with the property $ (X\setminus K)\cup \{x\} \subseteq L $ and $ y\notin L $.
		
		The characteristic function $ 1_{X\setminus K} $ and $ 1_{X\setminus L} $ are functions in $ \ccx $. We observe that $ Z(1_{X\setminus K}) \cap Z(1_{X\setminus L})\neq\emptyset$ because $ K\cap L\neq\emptyset $ as $ x\in K\cap L $. On the other hand $  Z(1_{X\setminus K}) \cup Z(1_{X\setminus L})= K\cup L=X$. It follows from Theorem \ref{t-4.2}(3) that $ d( 1_{X\setminus K}, 1_{X\setminus L})=3 $. Thus diam$ \gccx=3 $.
		
		If $ X $ contains just a single point then $ \ccx $ is isomorphic to the field $ \R $ and therefore the vertex set $ \vtccx=\emptyset $ and $ \gccx $ becomes an empty graph. Next let $ X=\{a,b\} $, a two membered set. Then $ C(X)=\ccx $ becomes isomorphic to the ring $ \R\times \R $, which is the direct product of the field $ \R $ with itself. The vertices in the comaximal graph $ \gccx $ can therefore be identified with elements of the form $ (r,0) $ or $ (0,s) $, $ r,s\in\R $, $ r\neq0 $, $ s\neq 0 $. Therefore for any two distinct vertices $ f $ and $ g $ in this graph either $ Z(f)\cap Z(g)=\emptyset $ or the two relations  $ Z(f)\cap Z(g)\neq\emptyset $ and $ Z(f)\cup Z(g)\subsetneq X $ combined together. Consequently from Theorem \ref{t-4.2} we get $ d(f,g)=1 $ or $ d(f,g)=2 $. Also for any vertex $ f $, $ d(f,2f)=2$, an easy verification. Hence diam$ \gccx=2 $.
	\end{proof}
	The following result is the countable counterpart of Lemma 3.1 in \cite{ref6}.
	\begin{theorem}\label{t-4.4}
		An $ f\in\gccx $ is a vertex of a triangle when and only when Coz$f=X\setminus Z(f)  $ contains at least two distinct members.
	\end{theorem}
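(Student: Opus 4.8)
The plan is to read off both implications directly from the adjacency rule of $\gccx$, namely $f\sim g$ if and only if $Z(f)\cap Z(g)=\emptyset$, together with the two facts that a vertex $f$ has $Z(f)\neq\emptyset$ (it is a non-unit) and Coz$f=X\setminus Z(f)\neq\emptyset$ (it is nonzero). No result beyond these basic observations and the zero-dimensionality of $X$ seems to be needed.

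For the ``only if'' part, I would start from a triangle $f\sim g\sim h\sim f$ with $g,h$ vertices of $\gccx$. Adjacency of $f$ with $g$ and with $h$ forces $Z(g)\subseteq X\setminus Z(f)$ and $Z(h)\subseteq X\setminus Z(f)$, while adjacency of $g$ with $h$ gives $Z(g)\cap Z(h)=\emptyset$. Since $g$ and $h$ are non-units, $Z(g)$ and $Z(h)$ are nonempty, and choosing one point from each yields two distinct points of $X\setminus Z(f)$.

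For the ``if'' part, suppose Coz$f$ contains distinct points $x$ and $y$. Because $Z(f)\neq\emptyset$, the set $X\setminus Z(f)$ is a proper open subset of $X$, and using the Hausdorff zero-dimensionality of $X$ I would produce \emph{disjoint} clopen sets $U,V$ with $x\in U\subseteq X\setminus Z(f)$ and $y\in V\subseteq X\setminus Z(f)$: first pick clopen neighbourhoods of $x$ and of $y$ contained in the open set $X\setminus Z(f)$, then intersect them with a clopen set separating $x$ from $y$ to make them disjoint. Put $g=1_{X\setminus U}$ and $h=1_{X\setminus V}$. Then $Z(g)=U$ and $Z(h)=V$ are nonempty and disjoint, both lie inside $X\setminus Z(f)$, and $X\setminus U$, $X\setminus V$ are nonempty because $U$ and $V$, being disjoint from the nonempty set $Z(f)$, are proper subsets of $X$. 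Hence $g,h\in\vtccx$; they are pairwise distinct from one another and from $f$ (the zero-sets $Z(f),U,V$ are pairwise distinct sets), and $Z(f)\cap Z(g)=Z(f)\cap Z(h)=Z(g)\cap Z(h)=\emptyset$, so $f\sim g\sim h\sim f$ is a triangle in $\gccx$.

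I do not anticipate a genuine obstacle; the only slightly delicate point is arranging $U$ and $V$ to be simultaneously clopen, pairwise disjoint, and contained in $X\setminus Z(f)$, which is a routine consequence of zero-dimensionality. This statement is the countable counterpart of Lemma 3.1 in \cite{ref6}, and the abundance of characteristic functions of clopen sets makes the construction of the two triangle-witnesses more transparent here than in the $C(X)$ setting.
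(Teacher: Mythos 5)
Your proof is correct and takes essentially the same approach as the paper: for the converse you produce two disjoint clopen subsets of $X\setminus Z(f)$, one around each of the two points, and use the characteristic functions of their complements as the remaining triangle vertices, which is exactly the paper's construction with its clopen sets $K$ and $L$ (there $Z(\chi_K)=X\setminus K$ and $Z(\chi_L)=X\setminus L$ play the roles of your $U$ and $V$). The only difference is that you prove the forward implication directly from the adjacency rule and the nonemptiness of the zero-sets of non-units, whereas the paper simply refers to the corresponding argument of Lemma 3.1 in \cite{ref6}; your direct argument is sound.
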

	\begin{proof}
		By closely following the arguments in the proof of ``$ \Leftarrow $'' part of Lemma 3.1 in \cite{ref6}, it is not hard to prove that if $ f $ is the vertex of a triangle then Coz$ f $ is not a one membered set.
		To prove the converse part of this theorem, let Coz$ f\supseteq\{p,q\} $ where $ p\neq q $, $ p,q\in X $. Then by the zero-dimensionality of $ X $, there exists a clopen set $ K $ in $ X $ such that $ q\notin K $ and $ \{p\}\cup Z(f) \subseteq K$. Then $\chi_K\in \ccx$ and is a vertex of $ \gccx $. It is clear that $ p\notin Z(f)\cup Z(\chi_K) $. We use the zero-dimensionality of $ X $ once again to find out a clopen set $ L $ in $ X $ with the property: $ Z(f)\cup Z(\chi_K) \subseteq L$ and $ p\notin L $. Then $ \chi_K\in \ccx $ and is a vertex of $ \gccx $. We check that $ Z(f)\cap Z(\chi_K)=Z(\chi_K)\cap Z(\chi_L)=Z(\chi_L)\cap Z(f)=\emptyset $.  Thus the vertices $ f,\ \chi_K,\ \chi_L $ make a triangle.
	\end{proof}
	\begin{theorem}\label{t-4.5}
		$ \gccx $ is triangulated if and only if $ X $ has no isolated point.
	\end{theorem}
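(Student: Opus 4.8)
The statement to prove is: $\gccx$ is triangulated if and only if $X$ has no isolated point. Recall that a graph is \emph{triangulated} when every vertex is a vertex of some triangle, so by Theorem \ref{t-4.4} the graph $\gccx$ is triangulated precisely when $\text{Coz}\,f = X\setminus Z(f)$ contains at least two distinct members for \emph{every} vertex $f$ of $\gccx$, i.e.\ for every $f\neq 0$ in $\ccx$ with $Z(f)\neq\emptyset$. So the plan is to reduce the topological condition ``$X$ has no isolated point'' to this cozero-set condition and argue both directions.

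\textbf{Necessity ($\Rightarrow$, contrapositive).} First I would show that if $X$ has an isolated point, then $\gccx$ is not triangulated. Suppose $x_0\in X$ is isolated. If $|X|\ge 3$, pick a point $y\neq x_0$; since $X$ is zero-dimensional, $\{x_0\}$ is clopen, so the characteristic function $f = 1_{\{x_0\}}$ lies in $\ccx$. Then $Z(f) = X\setminus\{x_0\}\neq\emptyset$, so $f$ is a vertex of $\gccx$, but $\text{Coz}\,f = \{x_0\}$ is a singleton. By Theorem \ref{t-4.4}, $f$ is not the vertex of any triangle, so $\gccx$ is not triangulated. (The degenerate cases $|X|=1$ and $|X|=2$ are handled separately, exactly as in the proof of Theorem \ref{t-4.3}: when $|X|=1$ the graph is empty and the statement is vacuous; when $|X| = \{a,b\}$ every vertex has a singleton cozero set, so again $\gccx$ is not triangulated — and such $X$ does have isolated points.)

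\textbf{Sufficiency ($\Leftarrow$).} Now assume $X$ has no isolated point, and let $f$ be any vertex of $\gccx$; I must show $\text{Coz}\,f$ has at least two distinct points. Since $f$ is a nonzero non-unit, $\text{Coz}\,f\neq\emptyset$, so pick $p\in\text{Coz}\,f$. If $\text{Coz}\,f = \{p\}$ were a singleton, then $Z(f) = X\setminus\{p\}$ is a cozero set's complement, hence $\{p\}$ is a cozero set; but in fact I want to derive that $\{p\}$ is open. Indeed, for a continuous function $f$ with countable range, $\text{Coz}\,f = \bigcup_{r\neq 0}\{x : f(x) = r\}$; more simply, $\text{Coz}\,f = f^{-1}(\R\setminus\{0\})$ is open, so $\{p\}$ is open, i.e.\ $p$ is an isolated point of $X$ — contradicting our hypothesis. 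Hence $\text{Coz}\,f$ contains at least two distinct members, and by Theorem \ref{t-4.4}, $f$ is a vertex of a triangle. Since $f$ was arbitrary, $\gccx$ is triangulated.

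\textbf{Expected main obstacle.} The mathematical core here is genuinely short once Theorem \ref{t-4.4} is in hand — the real work was done there. The one point needing care is the interaction with the small cases: when $|X|\le 2$ the graph $\gccx$ is either empty or has only singleton-cozero vertices, and one must confirm these are consistent with the biconditional (both sides behave as expected, since $X$ with at most two points always has an isolated point, and an empty graph is vacuously triangulated or can be excluded by convention). I would state the standing assumption (implicitly $|X|\ge 3$, as elsewhere in the section, e.g.\ Theorem \ref{t-2.5}) to keep the argument clean, and otherwise the proof is a direct translation through Theorem \ref{t-4.4}.
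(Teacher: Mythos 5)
Your proof is correct and follows exactly the route the paper intends: Theorem \ref{t-4.5} is stated there without proof precisely because it is the immediate corollary of Theorem \ref{t-4.4} that you spell out (a singleton cozero set corresponds to an isolated point since cozero sets are open). Your handling of the small-cardinality cases is a reasonable extra precaution but not a substantive difference.
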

Theorem 5.1 in \cite{ref6} says that a Tychonoff space $ X $ is devoid of any isolated point if and only if the comaximal graph $ \gccx $ is triangulated. Thus we can make the following comment.
	\begin{remark}\label{r-4.6}
		A zero-dimensional space $ X $ is devoid of any isolated point if and only if $  \Gamma_2^{'}(C(X)) $ is triangulated if and only if $\gccx $ is triangulated.
	\end{remark}
If $ X $ contains at least three distinct points $ p,q,r $ then on using the zero-dimensionality of $ X $, we can find a clopen set $ K $ in $ X $ such that $ r\notin K $ and $ \{p,q\}\subseteq K $. The function $ \chi_K $ is a vertex of $ \gccx $ and Coz$(\chi_K)\supseteq \{p,q\}  $. This yields to the following fact (we use Theorem \ref{t-4.4} for this purpose).
\begin{remark}
If $ |X|\geq 3 $, then girth of $ \gccx=3 $.
\end{remark}
The following proposition is the countable counterpart of Lemma 4.1 in \cite{ref6}. We simply enunciate it, because its proof can be accomplished by arguing analogously as in the proof of Lemma 4.1 \cite{ref6} and taking care of the zero-dimensionality of $ X $.
\begin{theorem}\label{t-4.8}
	Let $ f,g\in \vtccx $.
\begin{enumerate}
	\item Suppose $ Z(f)\cap Z(g)=\emptyset$, then
		\begin{enumerate}
		\item $ Z(f)\cup Z(g) \neq X$ if and only if $ C(f,g)=3 $.
		\item $ Z(f)\cup Z(g) =X$ if and only if $ C(f,g)=4 $.
	\end{enumerate}
\item Let $ Z(f)\cap Z(g)\neq \emptyset $, then
\begin{enumerate}
	\item $ Z(f)\cup Z(g)\neq X $ if and only if $ C(f,g)=4 $.
	\item $ Z(f)\cup Z(g)=X $ if and only if $ C(f,g)=6 $.
\end{enumerate}
\end{enumerate}
\end{theorem}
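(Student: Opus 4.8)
The plan is to read off each value of $C(f,g)$ directly from the distance formulae of Theorem \ref{t-4.2}, using two elementary facts about any simple graph: if a cycle passes through vertices $f$ and $g$ it decomposes into two internally disjoint paths joining $f$ and $g$, so $C(f,g)\geq 2\,d(f,g)$; and every cycle has length at least $3$, a $3$-cycle through $f$ and $g$ existing precisely when $f\sim g$ and $f,g$ admit a common neighbour. In $\gccx$ the vertex $h$ is adjacent to both $f$ and $g$ iff $Z(h)\subseteq X\setminus(Z(f)\cup Z(g))$, so a common neighbour exists iff $Z(f)\cup Z(g)\neq X$. I will also use repeatedly that for a non-empty clopen $W\subsetneq X$ the functions $1_{X\setminus W},2\cdot 1_{X\setminus W},3\cdot 1_{X\setminus W},\dots$ are pairwise distinct vertices of $\gccx$, all with zero-set exactly $W$; this gives enough room to keep the vertices of a constructed cycle distinct from one another and from $f$ and $g$. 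Finally, the four hypotheses partition the pairs of distinct vertices and the claimed values $3,4,4,6$ separate the two sub-cases within each main case, so it suffices to prove the four forward implications; the converses are then automatic.

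First I would handle the two ``short cycle'' cases. If $Z(f)\cap Z(g)=\emptyset$ and $Z(f)\cup Z(g)\neq X$, then $f\sim g$ and, by zero-dimensionality, there is a non-empty clopen $W\subseteq X\setminus(Z(f)\cup Z(g))$; then $h=1_{X\setminus W}$ is a common neighbour of $f$ and $g$ distinct from both, so $\{f,g,h\}$ is a triangle and $C(f,g)=3$. If $Z(f)\cap Z(g)\neq\emptyset$ and $Z(f)\cup Z(g)\neq X$, then $f\not\sim g$, so no triangle contains both and $C(f,g)\geq 4$; Theorem \ref{t-4.2}(2) gives $d(f,g)=2$, and taking a non-empty clopen $W\subseteq X\setminus(Z(f)\cup Z(g))$ together with the distinct vertices $h=1_{X\setminus W}$, $k=2\cdot 1_{X\setminus W}$ (each adjacent to both $f$ and $g$, but $Z(h)\cap Z(k)=W\neq\emptyset$, so $h\not\sim k$) yields a $4$-cycle $f\sim h\sim g\sim k\sim f$, hence $C(f,g)=4$.

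For the case $Z(f)\cap Z(g)=\emptyset$, $Z(f)\cup Z(g)=X$: here $f\sim g$, $Z(f)$ and $Z(g)$ are complementary non-empty clopen sets, and $f,g$ have no common neighbour, so $C(f,g)\geq 4$; choosing vertices $h$ with $Z(h)=Z(f)$ and $k$ with $Z(k)=Z(g)$ (for instance suitable scalar multiples of $f$ and $g$), distinct and different from $f,g$, one checks that $f\sim g\sim h\sim k\sim f$ is a $4$-cycle, so $C(f,g)=4$. For the remaining case $Z(f)\cap Z(g)\neq\emptyset$, $Z(f)\cup Z(g)=X$: Theorem \ref{t-4.2}(3) gives $d(f,g)=3$, so $C(f,g)\geq 6$, and it only remains to exhibit a $6$-cycle. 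Here $X\setminus Z(f)\subseteq Z(g)$ and $X\setminus Z(g)\subseteq Z(f)$, both $X\setminus Z(f)$ and $X\setminus Z(g)$ being non-empty; using zero-dimensionality I pick non-empty clopen $W_f\subseteq X\setminus Z(f)$ and $W_g\subseteq X\setminus Z(g)$ and set $a=1_{X\setminus W_f}$, $e=2\cdot 1_{X\setminus W_f}$, $b=1_{X\setminus W_g}$, $c=2\cdot 1_{X\setminus W_g}$ (adjusting scalars if necessary so that $f,a,b,g,c,e$ are pairwise distinct). Each consecutive pair in $f,a,b,g,c,e,f$ has disjoint zero-sets: for example $Z(a)\cap Z(b)\subseteq(X\setminus Z(f))\cap(X\setminus Z(g))=\emptyset$, while $Z(f)\cap Z(a)=\emptyset$ and $Z(g)\cap Z(b)=\emptyset$ by construction. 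Hence $f\sim a\sim b\sim g\sim c\sim e\sim f$ is a $6$-cycle and $C(f,g)=6$; possible chords are harmless, since the bound $C(f,g)\geq 2\,d(f,g)=6$ already forces this cycle to be shortest.

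I expect the only mild obstacle to be the bookkeeping in this last construction, namely keeping the six vertices distinct from one another and from $f$ and $g$; this is dealt with by the freedom to rescale characteristic functions noted at the outset (and in fact most of the distinctness is automatic, since $Z(a),Z(e)\subseteq X\setminus Z(f)$ while $Z(b),Z(c)\subseteq X\setminus Z(g)$, and each of these is properly contained in both $Z(f)$ and $Z(g)$ in this case). Apart from that, the whole argument is a faithful translation of Theorem \ref{t-4.2} and the triangle criterion of Theorem \ref{t-4.4} into assertions about $C(f,g)$.
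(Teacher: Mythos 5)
Your proposal is correct. The paper itself gives no proof of this theorem --- it only remarks that the argument is analogous to Lemma 4.1 of the cited paper on the comaximal graph of $C(X)$, ``taking care of the zero-dimensionality of $X$'' --- and your argument is exactly the intended filling-in of those details: lower bounds from $C(f,g)\ge 2\,d(f,g)$ together with Theorem \ref{t-4.2}, and explicit cycles built from characteristic functions of clopen sets and scalar multiples. (Only the parenthetical claim at the end that $X\setminus Z(f)$ and $X\setminus Z(g)$ are ``properly contained in both $Z(f)$ and $Z(g)$'' is misstated --- each is contained in one of them and disjoint from the other --- but the pairwise distinctness of the six vertices holds anyway, so nothing is affected.)
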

	Now for the zero-dimensional space $ X $, there exists a clopen set $ K $ in $ X $ such that $ K\neq \emptyset $ and $ K\neq X $. Clearly then $ 1_K $ and $ 1_{X\setminus K} \in \vtccx$ and $ Z(1_K)\cap Z(1_{X\setminus K}) =\emptyset$ and $ Z(1_K)\cup Z(1_{X\setminus K}) =X$. This yields on applying Theorem \ref{t-4.8}(1)(b) above that $ C(1_K, 1_{X\setminus K}) =4$.
	\begin{remark}
		 $ \gccx $ is never hypertriangulated.
	\end{remark}
	\begin{theorem}\label{t-4.10}
		The following statements are equivalent for a zero-dimensional space $ X $.
		\begin{enumerate}
			\item The comaximal graph $ \gccx $ is complemented.
			\item The comaximal graph $\gcx  $ is complemented.
			\item $ X $ is a $ P $-space.
		\end{enumerate}
	\end{theorem}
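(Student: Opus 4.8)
The plan is to prove the cycle of implications $(3)\Rightarrow(1)\Rightarrow(2)$ and $(2)\Rightarrow(3)$; the middle implication $(1)\Rightarrow(2)$ will actually be upgraded to ``$(1)\Leftrightarrow(2)$'' separately since it is the transfer step between $C_c(X)$ and $C(X)$ and does not need the $P$-space hypothesis. Throughout I will use the characterization of complementedness for comaximal graphs analogous to Theorem~\ref{t-3.1}: a vertex $g$ is orthogonal to a vertex $f$ precisely when $Z(f)\cap Z(g)=\emptyset$ (so $f,g$ are adjacent) and no vertex is adjacent to both, which by the cycle analysis (Theorem~\ref{t-4.8}, cf.\ Lemma~4.1 in \cite{ref6}) amounts to $Z(f)\cup Z(g)=X$. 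Hence $\gccx$ is complemented iff for every vertex $f$ there is a vertex $g$ with $Z(f)\cap Z(g)=\emptyset$ and $Z(f)\cup Z(g)=X$, i.e.\ $Z(g)=X\setminus Z(f)=\mathrm{Coz}\,f$. So complementedness of $\gccx$ is equivalent to the assertion that $\mathrm{Coz}\,f$ is a zero-set of $C_c(X)$ for every non-unit non-zero $f$ (and symmetrically $Z(f)$ is a cozero-set), which is exactly a statement about clopen-ness of cozero sets.

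For $(3)\Rightarrow(1)$: if $X$ is a $P$-space then every cozero-set of $C_c(X)$ is clopen (a zero-set in a $P$-space is open, hence $\mathrm{Coz}\,f$ is closed; and it is always open), so given a vertex $f$ the set $\mathrm{Coz}\,f$ is a nonempty proper clopen subset of $X$, and $g=1_{\mathrm{Coz}\,f}=1_{X\setminus Z(f)}\in C_c(X)$ is a vertex with $Z(g)=Z(f)^c$, giving $Z(f)\cap Z(g)=\emptyset$, $Z(f)\cup Z(g)=X$. By the above, $g\perp f$, so $\gccx$ is complemented. For $(2)\Rightarrow(3)$ I would invoke the known theorem (this is the content cited from \cite{ref6}, see the remark in the excerpt's abstract) that the comaximal graph $\gcx$ of $C(X)$ is complemented iff $X$ is a $P$-space; alternatively one argues directly that complementedness of $\gcx$ forces every cozero-set of $C(X)$ to be a zero-set of $C(X)$, hence clopen in the strongly zero-dimensional $X$, and a space all of whose cozero-sets are closed is a $P$-space.

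The real work is the equivalence $(1)\Leftrightarrow(2)$, i.e.\ $\gccx$ is complemented iff $\gcx$ is complemented. The direction $(2)\Rightarrow(1)$ is the delicate one: starting from a vertex $f\in C_c(X)$, $(2)$ produces $g\in C(X)$ with $Z(f)\cap Z(g)=\emptyset$ and $Z(f)\cup Z(g)=X$, i.e.\ $Z(g)=\mathrm{Coz}\,f$; but I need a witness inside $C_c(X)$, i.e.\ I need $\mathrm{Coz}\,f$ to be a zero-set of $C_c(X)$, equivalently (since $C_c(X)$ has characteristic functions of clopen sets in abundance) I need $\mathrm{Coz}\,f$ clopen. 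The point is that $Z(g)=\mathrm{Coz}\,f$ is both a zero-set of $C(X)$ and a cozero-set of $C(X)$, hence clopen in $X$; then $1_{\mathrm{Coz}\,f}\in C_c(X)$ is the desired orthogonal vertex. Conversely $(1)\Rightarrow(2)$ needs the hypothesis that $X$ is strongly zero-dimensional so that $\{Z(h):h\in C_c(X)\}=\{Z(h):h\in C(X)\}$ (Theorem~2.4 of \cite{ref3}, already used for Theorem~\ref{t-3.20}): given a vertex $f\in C(X)$, pick $h\in C_c(X)$ with $Z(h)=Z(f)$, apply $(1)$ to get $g\in C_c(X)\subseteq C(X)$ with $Z(h)\cap Z(g)=\emptyset$, $Z(h)\cup Z(g)=X$, i.e.\ $Z(f)\cap Z(g)=\emptyset$, $Z(f)\cup Z(g)=X$, so $g\perp f$ in $\gcx$. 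I expect the main obstacle to be bookkeeping the ``vertex vs.\ element'' distinction (ensuring the produced $g$ is a genuine vertex, i.e.\ a nonzero non-unit, which is automatic as long as $Z(f)\neq\emptyset$ and $Z(f)\neq X$, handled as in the proof of Theorem~\ref{t-3.13}) and correctly citing the $P$-space characterization from \cite{ref6} rather than reproving it; the topological core—``every cozero-set closed $\iff$ $P$-space'' together with strong zero-dimensionality to move zero-sets between $C_c(X)$ and $C(X)$—is straightforward once set up.
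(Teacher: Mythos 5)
Your orthogonality criterion for the comaximal graph ($f\perp g$ iff $Z(f)\cap Z(g)=\emptyset$ and $Z(f)\cup Z(g)=X$), your proof of $(3)\Rightarrow(1)$, and your citation of \cite{ref6} for the $C(X)$ side all match the paper. The problem is that your cycle of implications does not close under the stated hypothesis. The theorem assumes only that $X$ is zero-dimensional, yet your sole exit from statement $(1)$ is the implication $(1)\Rightarrow(2)$, which you justify by transferring zero-sets between $C_c(X)$ and $C(X)$ via the identity $\{Z(h):h\in C_c(X)\}=\{Z(h):h\in C(X)\}$ --- and, as you yourself note, that identity (Theorem 2.4 of \cite{ref3}) requires $X$ to be \emph{strongly} zero-dimensional. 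You cannot invoke strong zero-dimensionality at that point: it would follow from $(3)$ (a zero-dimensional $P$-space is strongly zero-dimensional), but $(3)$ is precisely what you are trying to reach from $(1)$, so the argument is circular. The paper sidesteps this by proving $(1)\Rightarrow(3)$ directly: complementedness forces $Z(f)$ to be clopen for every nonzero non-unit $f\in C_c(X)$ (a fact you do establish in your first paragraph), and from there one concludes that $X$ is a $P$-space; the bridge you are missing is that clopenness of all zero-sets of $C_c(X)$ makes $C_c(X)$ a von Neumann regular ring, which for zero-dimensional $X$ is equivalent to $X$ being a $P$-space by Proposition 5.3 of \cite{KGN}. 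Your proof needs this step (or an equivalent direct argument) spelled out; the detour through $(2)$ does not supply it.

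A separate, easily repaired slip: since $Z(1_A)=X\setminus A$, the orthogonal witness for a vertex $f$ with $Z(f)$ clopen is $1_{Z(f)}$, whose zero-set is $\mathrm{Coz}\,f$, not $1_{\mathrm{Coz}\,f}$ as written in both your $(3)\Rightarrow(1)$ and $(2)\Rightarrow(1)$ arguments; the function $1_{\mathrm{Coz}\,f}$ satisfies $Z(1_{\mathrm{Coz}\,f})=Z(f)$ and is therefore not even adjacent to $f$.
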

	\begin{proof}
	$ (1)\Rightarrow (3)$. Assume that $ \gccx $ is complemented. Choose $ f\in \ccx $. If $ f=0 $ or a unit in $ \ccx $, then $ Z(f)=X $ or $ Z(f)=\emptyset $. Assume that $ f $ is a nonzero non-unit in $ \ccx $. Then $ f $ is a vertex in $ \gccx $. The hypothesis indicates that there is a vertex $ g $ in $ \gccx $ such that $ f\perp g $. Consequently $ Z(f)\cap Z(g)=\emptyset $, by Theorem \ref{t-4.2}(1) and also $ C(f,g)>3 $. It follows from Theorem \ref{t-4.8}(a) that $ Z(f)\cup Z(g)=X$. Thus $ Z(f) $ is a clopen set in $ X $. Therefore each zero-set in $ X $ is a clopen set, hence $X$ is a $P$-space.
	
	$ (3)\Leftrightarrow (2) $ follows from Theorem 5.3 in \cite{ref6}.
	
	$ (3)\Rightarrow (1) $: Let $ X $ be a $ P $-space and $ f $ a vertex in $ \gccx $. Then $ Z(f) $ is a non empty proper clopen subset of the space $ X $. Consequently $ 1_{Z(f)} $ is a vertex in $ \gccx $ and $ Z(f)\cap Z(1_{Z(f)}) =\emptyset$ and also $ Z(f)\cup Z(1_{Z(f)}) =X$. It follows from Theorem \ref{t-4.8}(1)(b) that $ C(f,1_{Z(f)})=4 $. Thus $ 1_{Z(f)} $ is a vertex orthogonal to $ f $ in $ \gccx $. Hence $ \gccx $ is complemented.
	\end{proof}
	Before  presenting the last theorem of this section  which is the countable counterpart of Theorem 5.6 in \cite{ref6} we need to establish the following elementary fact about the nature of the fixed maximal ideals in $ \ccx $.
	\begin{theorem}\label{t-4.11}
		A fixed maximal ideal $ M_p=\{f\in \ccx: f(p)=0\} $, $ p\in X $ is principal in $ \ccx $ if and only if $ p $ is an isolated point of $ X $.
	\end{theorem}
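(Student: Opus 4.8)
The plan is to prove both implications directly, using the abundance of characteristic functions of clopen sets afforded by zero-dimensionality. For the ``if'' direction, suppose $p$ is an isolated point of $X$. Then $\{p\}$ is clopen, so $X\setminus\{p\}$ is clopen too, and the characteristic function $1_{X\setminus\{p\}}$ lies in $\ccx$ and vanishes exactly at $p$. I claim $M_p=\langle 1_{X\setminus\{p\}}\rangle$. Indeed, take any $f\in M_p$, so $f(p)=0$. Write $f = 1_{X\setminus\{p\}}\cdot g$ where $g$ is defined by $g(x)=f(x)$ for $x\neq p$ and $g(p)=0$; since $\{p\}$ is clopen, $g$ is continuous (it agrees with $f$ on the clopen set $X\setminus\{p\}$ and is locally constant near $p$), and $g$ has countable range because $f$ does, so $g\in\ccx$. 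Hence $f\in\langle 1_{X\setminus\{p\}}\rangle$, giving $M_p\subseteq\langle 1_{X\setminus\{p\}}\rangle$; the reverse inclusion is immediate since $1_{X\setminus\{p\}}(p)=0$. Thus $M_p$ is principal.

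For the ``only if'' direction, suppose $M_p=\langle h\rangle$ for some $h\in\ccx$ with $Z(h)\ni p$; in fact $Z(h)=\{p\}$, because any $f\in M_p$ is a multiple of $h$, so $Z(h)\subseteq Z(f)$ for all $f\in M_p$, and since $\bigcap\{Z(f):f\in M_p, f(p)=0\}=\{p\}$ (using zero-dimensionality: for any $x\neq p$ there is a clopen $W$ with $p\in W$, $x\notin W$, whence $1_{X\setminus W}\in M_p$ and $x\notin Z(1_{X\setminus W})$), we get $Z(h)\subseteq\{p\}$, and $Z(h)\neq\emptyset$ since $h$ is a non-unit, so $Z(h)=\{p\}$. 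Now I would argue that $\{p\}$ must be open. Suppose not; then $p$ is a limit point of $X\setminus\{p\}=\operatorname{Coz}h$. Consider a function $f\in M_p$ that vanishes on a clopen neighborhood of $p$ but whose zero-set is strictly larger than $\{p\}$ near $p$ — or more cleanly, I would engineer a contradiction from the equation $f=hg$ by comparing the rates at which $f$ and $h$ approach $0$ at $p$. The cleanest route: pick a sequence (or net) $x_n\to p$ with $h(x_n)\neq 0$ and $|h(x_n)|\to 0$; since $X$ is zero-dimensional we may take the $x_n$ in pairwise disjoint clopen sets $W_n$, all avoiding $p$, and define $f\in\ccx$ by $f = \sum_n c_n 1_{W_n}$ for suitably chosen $c_n\to 0$ (so that $f$ is continuous at $p$ with $f(p)=0$, hence $f\in M_p$) but with $c_n/h(x_n)$ unbounded; then $f=hg$ would force $g(x_n)=c_n/h(x_n)$, contradicting boundedness of $g$ on a neighborhood of $p$ (continuity of $g$ at $p$).

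The main obstacle is the ``only if'' direction — specifically, justifying the choice of the $c_n$ and $W_n$ so that $f$ genuinely lands in $\ccx$ (countable range, continuity everywhere including at $p$) while the quotient $f/h$ blows up along $x_n\to p$. Countable range is automatic for any function built as a countable sum of scaled characteristic functions of disjoint clopen sets; continuity away from $p$ is clear; continuity at $p$ requires only $c_n\to 0$; and the blow-up requires $c_n/h(x_n)\to\infty$, which is arranged by choosing, recursively, $x_n$ with $|h(x_n)|$ small enough and then $c_n=\sqrt{|h(x_n)|}$, say. One must also check that such a sequence $x_n\to p$ with $h(x_n)\to 0$, $h(x_n)\neq 0$ exists whenever $\{p\}$ is not open: this follows because $p\in\overline{\operatorname{Coz}h}\setminus\operatorname{int}\{p\}$ and $h$ is continuous with $h(p)=0$. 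Assembling these routine verifications completes the proof; I expect Theorem~\ref{t-4.11} then to be used in the proof of the section's final theorem (the countable counterpart of Theorem 5.6 in \cite{ref6}).
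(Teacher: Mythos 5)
Your ``if'' direction and the first half of your ``only if'' direction (forcing $Z(h)=\{p\}$ for a generator $h$ of $M_p$) are correct and essentially the same as the paper's. The gap is in the final contradiction. Your construction hinges on a sequence $(x_n)$ that clusters at $p$: the unboundedness of $g$ along $(x_n)$ contradicts continuity of $g$ at $p$ only if every neighborhood of $p$ contains infinitely many $x_n$. But $X$ is merely Hausdorff and zero-dimensional; it need not be first countable or sequential, and a non-isolated point need not be the limit, or even a cluster point, of any sequence. Choosing $x_n$ with $0<|h(x_n)|<1/n$ does not repair this: such points are only guaranteed to lie in the sets $\{|h|<1/n\}$, which need not form a neighborhood base at $p$, so the $x_n$ may form a closed discrete set on which a continuous $g$ can perfectly well be unbounded. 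Worse, if $(x_n)$ has no cluster point, your $f=\sum_n c_n 1_{W_n}$ may actually lie in $\langle h\rangle$, because $f/h$ can then extend continuously across $p$. So the argument as written does not close, and you correctly identified this step as the obstacle, but the recursive choice of $c_n$ and $x_n$ does not address the topological issue.

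The paper sidesteps sequences entirely by testing membership with a fractional power of the generator itself: since $Z(h)=\{p\}$, the function $h^{1/5}$ (an odd root, so real-valued, continuous, with countable range) lies in $M_p=\langle h\rangle$, and $h^{1/5}=gh$ forces $|g|=|h|^{-4/5}$ on $X\setminus\{p\}$. Continuity of $g$ at $p$ bounds $|g|$ by some $\delta$ on a neighborhood $U$ of $p$, whence $|h|^{4/5}\geq 1/\delta$ on $U\setminus\{p\}$; but continuity of $h$ at $p$ with $h(p)=0$, together with the non-isolatedness of $p$, produces a point of $U\setminus\{p\}$ where $|h|^{4/5}<1/\delta$ --- a pointwise contradiction needing no convergent sequence. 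If you want to keep the flavour of your argument, the fix is to replace the sequence by the observation that $\inf_{U\setminus\{p\}}|h|=0$ for every neighborhood $U$ of $p$ (because $\{|h|<\epsilon\}\cap U$ meets $X\setminus\{p\}$ for every $\epsilon>0$), which already makes $f/h$ unbounded on every deleted neighborhood of $p$ once $f$ is taken to be a suitable root of $h$ rather than an independently built step function.
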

	\begin{proof}
		It can be easily proved that by using first isomorphism theorem of Algebra that $ \{M_p:p\in X \}$ is the entire list of fixed maximal ideals in $ \ccx $, here $ M_p=\{f\in \ccx: f(p)=0\} $. First assume that $ M_p=<f> $ for some $ f\in \ccx $. Thus $ p\in Z(f) $. We claim that $ Z(f)=\{p\} $, for if $ q\in Z(f) $ for some $ q\neq p $ in $X $ then $ f\in M_q$ which implies that $ M_p\subseteq M_q $ and hence $ p=q $, due to the maximality of the ideals, a contradiction. Now the function $ f^{\frac{1}{5}} \in\ccx$ and $ f^{\frac{1}{5}}(p)=0 $ and therefore $ f^{\frac{1}{5}}\in <f> $. Consequently there exists $ h\in \ccx $ such that $ f^{\frac{1}{5}}=hf $ and hence $ h(x)=\frac{1}{f^{\frac{4}{5}}}(x) $ for all $x\in  X\setminus Z(f) $. We now assert that $ p $ is an isoleted point of $ X $. If possible let $ p $ be a nonisolated point in $ X $. Then each neighbourhood of $ p $ contains infinitely many points of $ X $. Again since $ h $ is continuous at the point $ p $, there  exists a $ \delta>0 $ such that $ |h(x)|\leq\delta $ for all $ x $ belonging to a neighbourhood $ U $ of $ p $ in $ X $ and hence$ |f^{\frac{4}{5}}(x)|\geq\frac{1}{\delta} $ for all $ x\in U\setminus \{p\} $. But since $ f^{\frac{4}{5}}(x)=0 $, there exists a neighbourhood $ V $ of $ p $ in $ X $ such that $ |f^{\frac{4}{5}}(x)|<\frac{1}{\delta} $ for all $ x\in (U\cap V)\setminus\{p\} $, a contradiction. Thus $ p $ is an isolated point of $ X $.
		
		Conversely, let $ p $ be an isolated point of $ X $. Then  the function $ \chi_{X\setminus\{p\}}\in \ccx $ and $ Z(\chi_{X\setminus\{p\}})=\{p\} $. It follows that for any $ g\in M_p $, $ Z(g) $ is a neighbourhood of $ Z(\chi_{X\setminus\{p\}}) $. Hence $ g $ is a multiple of $ \chi_{X\setminus\{p\}} $ in the ring $ \ccx $. This proves that $ M_p=(\chi_{X\setminus\{p\}})= $ the principal ideal in $ \ccx  $ generated by $ \chi_{X\setminus\{p\}} $.
	\end{proof}
	\begin{theorem}\label{t-4.12}
		The following statements are equivalent for a zero-dimensional space $ X $.\begin{enumerate}
			\item $ X $ is an almost $ P $-space $ ( $meaning that every non empty zero-set in $ X $ has non  empty interior$ ) $ and there is no isolated point in $ X $.
\item $ \ccx $ is an almost regular ring $ ( $meaning that every nonzero non-unit in $ \ccx $ is a divisor of zero$ ) $ and there does not exist any maximal ideal in $ \ccx $ which is principal.
\item Radius of $ \gccx=3 $.
\item For each vertex $ f $ in $ \gccx,$ there exists a vertex $ g $ in $ \gccx $ such that $ C(f,g)=6 $.
		\end{enumerate}
	\end{theorem}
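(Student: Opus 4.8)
The plan is to route the four equivalences through the two ``hub'' statements $(1)$ and $(3)$, establishing in order $(3)\Leftrightarrow(4)$, then $(1)\Leftrightarrow(3)$, and finally $(1)\Leftrightarrow(2)$. The only ingredients needed are Theorem \ref{t-4.2} (the distance trichotomy), Theorem \ref{t-4.8} (the smallest-cycle trichotomy), Theorem \ref{t-4.11} (principal fixed maximal ideals), the zero-dimensionality of $X$, and the elementary remark that a nonzero $f\in\ccx$ is a divisor of zero exactly when $\mathrm{int}\,Z(f)\neq\emptyset$.

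\emph{$(3)\Leftrightarrow(4)$.} I would observe that for vertices $f,g$ of $\gccx$ the conditions ``$d(f,g)=3$'' (Theorem \ref{t-4.2}(3)) and ``$C(f,g)=6$'' (Theorem \ref{t-4.8}(2)(b)) are each equivalent to the same pair $Z(f)\cap Z(g)\neq\emptyset$ and $Z(f)\cup Z(g)=X$. Since $d(f,g)\le 3$ always, every vertex has eccentricity at most $3$, so the radius is at most $3$; hence the radius equals $3$ iff every vertex has eccentricity exactly $3$, iff every vertex $f$ admits a vertex $g$ with $d(f,g)=3$, iff (by the displayed equivalence) every vertex $f$ admits a vertex $g$ with $C(f,g)=6$, which is precisely $(4)$.

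\emph{$(1)\Leftrightarrow(3)$.} For $(1)\Rightarrow(3)$, let $f$ be any vertex, so $\emptyset\subsetneq Z(f)\subsetneq X$. Almost $P$-ness gives $\mathrm{int}\,Z(f)\neq\emptyset$, and zero-dimensionality a nonempty clopen $W\subseteq\mathrm{int}\,Z(f)$; since $X$ has no isolated point, $W$ is not a singleton, so it contains a nonempty proper clopen subset $W'$. Then $g=1_{W'}$ is a vertex with $\mathrm{Coz}\,g=W'\subseteq Z(f)$, whence $Z(f)\cup Z(g)=X$ and $\emptyset\neq W\setminus W'\subseteq Z(f)\cap Z(g)$, so $d(f,g)=3$ by Theorem \ref{t-4.2}(3); thus every vertex has eccentricity $3$ and the radius is $3$. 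For $(3)\Rightarrow(1)$ I argue contrapositively. If $p\in X$ is isolated, then $f=1_{X\setminus\{p\}}$ is a vertex with $Z(f)=\{p\}$, and a vertex $g$ at distance $3$ from $f$ would need $Z(g)\supseteq X\setminus\{p\}$ together with $p\in Z(g)$, forcing $g=0$; so $e(f)\le 2$ and the radius is $<3$. If instead some nonempty zero-set $Z(f)$ has empty interior, then $\mathrm{Coz}\,f$ is dense, and a vertex $g$ at distance $3$ from $f$ would need $Z(g)\supseteq\mathrm{Coz}\,f$ with $Z(g)$ closed, hence $Z(g)=X$ and $g=0$; again $e(f)\le 2$ and the radius is $<3$.

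\emph{$(1)\Leftrightarrow(2)$.} I split $(2)$ into its two clauses. First, ``$\ccx$ is almost regular'' $\Leftrightarrow$ ``$X$ is an almost $P$-space'': a nonzero non-unit $f$ has $\emptyset\subsetneq Z(f)\subsetneq X$, and $f$ is a divisor of zero iff $\mathrm{int}\,Z(f)\neq\emptyset$ (if the interior is nonempty, a nonempty clopen $W$ inside it gives $1_W f=0$; conversely $fg=0$ with $g\neq 0$ forces $\emptyset\neq\mathrm{Coz}\,g\subseteq\mathrm{int}\,Z(f)$), so the two conditions say the same thing about every nonempty proper zero-set, and $X$ itself has nonempty interior trivially. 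Second, ``$\ccx$ has no principal maximal ideal'' $\Leftrightarrow$ ``$X$ has no isolated point'': an isolated $p$ makes $M_p$ principal by Theorem \ref{t-4.11}; conversely, if $M=\langle f\rangle$ is a principal maximal ideal then every member of $M$ vanishes on $Z(f)\neq\emptyset$, so $M$ is a fixed maximal ideal, hence $M=M_p$ for some $p\in X$, and Theorem \ref{t-4.11} forces $p$ to be isolated. Conjoining the two clauses yields $(1)\Leftrightarrow(2)$. The step I expect to be the most delicate is the construction in $(1)\Rightarrow(3)$: one must genuinely use both hypotheses of $(1)$ — almost $P$-ness to place a clopen set inside $\mathrm{int}\,Z(f)$, and ``no isolated point'' to ensure that clopen set is not a singleton so that a proper nonempty clopen subset exists — after which the union and intersection conditions of Theorem \ref{t-4.2}(3) drop out routinely.
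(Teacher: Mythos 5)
Your overall architecture is sound and in several places genuinely different from the paper's: you prove $(3)\Leftrightarrow(4)$ directly from the coincidence of the conditions in Theorem \ref{t-4.2}(3) and Theorem \ref{t-4.8}(2)(b) rather than deferring to the arguments of Theorem 5.6 in \cite{ref6}; you prove $(1)\Rightarrow(3)$ by an explicit construction with characteristic functions instead of the paper's argument by contradiction; you prove $(3)\Rightarrow(1)$ by contraposition inside $\gccx$ itself rather than, as the paper does, transferring to the comaximal graph of $C(X)$ and invoking Proposition 2.3 and Theorem 5.6 of \cite{ref6}; and you replace the citation of Theorem 4.6 in \cite{ref1} by a direct argument that a nonzero non-unit $f$ of $\ccx$ is a zero-divisor iff $\mathrm{int}\,Z(f)\neq\emptyset$. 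Your observation that a principal maximal ideal $\langle f\rangle$ is automatically fixed (since $Z(f)\neq\emptyset$ forces $\langle f\rangle\subseteq M_p$) is a detail the paper glosses over when it applies Theorem \ref{t-4.11}, which only treats fixed maximal ideals; that is a worthwhile addition.

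There is, however, one genuine gap, occurring twice. ``Zero-set in $X$'' in statement $(1)$ means $Z(f)$ for $f\in C(X)$, not merely for $f\in\ccx$; the paper's proof of $(3)\Rightarrow(1)$ makes this explicit by starting with an arbitrary $f\in C(X)$ with $Z(f)\neq\emptyset$. In your contrapositive for $(3)\Rightarrow(1)$ you take ``some nonempty zero-set $Z(f)$ with empty interior'' and then treat $f$ as a vertex of $\gccx$; but the witness $f$ to the failure of almost-$P$-ness lives in $C(X)$ and need not have countable range, so it need not be a vertex of $\gccx$ at all, and your argument only rules out empty-interior zero-sets of functions in $\ccx$. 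The same issue infects the half ``$\ccx$ almost regular $\Rightarrow$ $X$ almost $P$'' of your $(1)\Leftrightarrow(2)$, which again only controls $\ccx$-zero-sets. The bridge is Lemma 2.2(c) in \cite{ref3} (used by the paper): every nonempty zero-set of a function in $C(X)$ contains a nonempty zero-set of a function in $\ccx$. With that lemma, if $Z(f)$ ($f\in C(X)$) had empty interior one picks $g\in\ccx$ with $\emptyset\neq Z(g)\subseteq Z(f)$, so $\mathrm{int}\,Z(g)=\emptyset$ as well, and your argument applied to the vertex $g$ goes through; this lemma is not a formality, since producing countable-range functions with prescribed small zero-sets requires a nested-clopen-sets construction. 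Everything else in your proposal checks out.
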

\begin{proof}
	Since a zero-dimensional space is almost $ P $ if and only if $ \ccx $ is an almost regular ring (vide: Theorem 4.6 in \cite{ref1}), it follows by taking into consideration the result of Theorem \ref{t-4.11} that $ (1)\Leftrightarrow (2) $.

$ (1)\Rightarrow (3)$. Let $ f\in\vtccx $. It is sufficient to show that $ e(f)=3 $. Indeed $ f $ is a nonzero non-unit in $ \ccx $ implies that $ Z(f)\neq\emptyset $, which further yields because $ X $ is almost $ P $ that int$ Z(f)\neq\emptyset $ and int$ Z(f) $ contains at least two distinct points. We argue by contradiction. If possible let $ e(f)\neq3 $. It follows that for each $ g\in \vtccx$, $ d(f,g)<3 $. This implies in view of Theorem \ref{t-4.2} that $ \forall\ g\in \vtccx $, $ Z(f)\cup Z(g)=X\Rightarrow Z(f)\cap Z(g)=\emptyset $. This is the case when and only when $ \forall\ g\in \vtccx $, Coz$ (g)\subseteq Z(f)\Rightarrow Z(f)\subseteq$ Coz$ (g) $ and this happens if and only if $ \forall\ g\in \vtccx $, Coz$ (g)\subseteq Z(f)\Rightarrow Z(f)=$ Coz$ (g) $...$ (*) $. Now using the zero-dimensionality of $ X $, we can produce a $ g\in\ccx $ such that $ \emptyset\ \neq $Coz$ (g) \subseteq$ int$ Z(f)\subseteq Z(f)\subsetneq X $. Surely then $ g $ is a nonzero non-unit in $ \ccx $ and  therefore $ g\in\vtccx $. It follows from the above relation $ (*) $ that Coz$ (g)=Z(f) $ and hence $ Z(f) $ is clopen in $ X $. To get the desired contradiction, it suffices to show that Coz$ (g) $ is a singleton. If possible let there exist $ p,q\in  $Coz$ (g) $, $ p\neq q $. Then on using zero-dimensionality of $ X $ once again, we can find out an $ h\in \ccx $ such that $ h($Coz$(f)\cup \{q\})=\{0\} $and $ h(p)=1 $. This implies that Coz$ (h) \subseteq Z(f)$. But $ q\in Z(f) $ and $ q\notin $Coz$ (h) $ indicates that Coz$ (h)\subsetneq Z(f) $. This contradicts the relation $ (*) $ above.

$ (3)\Rightarrow (1) $. Suppose $ f\in C(X) $ with $ Z(f)\neq\emptyset $. Then by Lemma 2.2(c) in \cite{ref3} there exists $ g\in \ccx $ such that $ \emptyset\neq Z(g)\subseteq Z(f) $. By $ (3) $ it follows that $ e(g)=3 $. So  there exists $ h\in \vtccx $ such that $ d(g,h)=3 $. Hence by Theorem \ref{t-4.2}, $ Z(g)\cap Z(h) \neq\emptyset$ and $ Z(g)\cup Z(h)=X $. From this it follows that $ Z(g) $ contains at least two distinct points  and also int$ Z(g)\neq \emptyset $. Consequently $ Z(f) $ also contains at least two distinct points and int$Z(f)\neq\emptyset$. Thus for an arbitrary vertex $ f $ in the comaximal graph $ \Gamma_2^{'}(C(X))$ of $ C(X) $, neither interior of $ Z(f) $ is empty nor $ Z(f) $ is a singleton. We now apply Proposition 2.3 in \cite{ref6}, to conclude that $ e(f)=3 $. Thus radius of $ \Gamma_2^{'}(C(X))=3 $. It follows from Theorem 5.6 in \cite{ref6} that $ X $ is an almost $ P $-space, devoid of any isolated point. Thus the first three statements $ (1),(2),(3) $ are equivalent.

The implication relation $ (4)\Rightarrow(2) $ and $ (3)\Rightarrow(4) $ could be established by following the arguments in Theorem 5.6 in \cite{ref6}.
\end{proof}
\section{Zero-divisor graph versus comaximal graph in $ \ccx $/ $ C(X) $}
	Since for a zero-dimensional space $ X $, the two graphs $ \gnccx $ and $ \gccx $ are syntactically different, it is desirable that for a large class of spaces $ X $, these two graphs are algebraically different. This  means that there should not exist any graph isomorphism from $ \gnccx $ onto $ \gccx $. Any graph isomorphism from a graph $ G_1 $ onto a graph $ G_2 $ stands for a bijective map between the vertices of  $ G_1 $ and $ G_2 $ which further preserves the adjacency relation. This section begins with a natural class of zero-dimensional space $ X $, for which $ \gnccx $ and $ \gccx$ are non isomorphic as graphs.
	 \begin{theorem}\label{to-5.1}
	 	Let $ X $ be a perfectly normal strongly zero-dimensional space such that $ X $ is not a $ P $-space (there are enough examples of such spaces $ X $, viz. if $ X $ is a dense subset of an Euclidean space $ \mathbb{R}^n,\ n\in \mathbb{N} $ for which $ \mathbb{R}^n\setminus X $ is also dense in $ \mathbb{R}^n $, then $ X $ is an example of such a space). Then $ \gccx $ is not a complemented graph, although $\gnccx$ is a complemented graph. 
	 \end{theorem}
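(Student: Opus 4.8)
The plan is to prove the two assertions separately, since they are logically independent. The negative statement about the comaximal graph is immediate: Theorem \ref{t-4.10} says that $ \gccx $ is complemented if and only if $ X $ is a $ P $-space, and by hypothesis $ X $ is not a $ P $-space; hence $ \gccx $ is not complemented, and nothing more is needed here.

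The substantive part is to show that $ \gnccx $ is complemented, and for this I would invoke the criterion of Theorem \ref{t-3.1}: it suffices to produce, for each vertex $ f $ of $ \gnccx $, a vertex $ g $ with $ Z(f)\cup Z(g)=X $ and int$ Z(f)\,\cap\,$int$ Z(g)=\emptyset $. So fix a vertex $ f $, i.e.\ a nonzero divisor of zero in $ \ccx $; then $ \operatorname{int}Z(f)\neq\emptyset $, and moreover $ \operatorname{int}Z(f) $ cannot be dense in $ X $, for otherwise its closure, being contained in the closed set $ Z(f) $, would be all of $ X $, forcing $ f=0 $. Now the two hypotheses on $ X $ enter. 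Since $ X $ is perfectly normal, every closed subset of $ X $ is a zero-set of some member of $ C(X) $, so in particular $ X\setminus\operatorname{int}Z(f) $ is such a zero-set; and since $ X $ is strongly zero-dimensional, $ \{Z(h):h\in C(X)\}=\{Z(h):h\in \ccx\} $ by Theorem 2.4 of \cite{ref3}, so we may pick $ g\in \ccx $ with $ Z(g)=X\setminus\operatorname{int}Z(f) $.

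It then remains to check that this $ g $ is a vertex of $ \gnccx $ orthogonal to $ f $. First, $ Z(f)\cup Z(g)\supseteq\operatorname{int}Z(f)\cup(X\setminus\operatorname{int}Z(f))=X $. Second, $ \operatorname{int}Z(g)=\operatorname{int}(X\setminus\operatorname{int}Z(f))=X\setminus\overline{\operatorname{int}Z(f)} $, which is disjoint from $ \operatorname{int}Z(f) $ because $ \operatorname{int}Z(f)\subseteq\overline{\operatorname{int}Z(f)} $; hence int$ Z(f)\,\cap\,$int$ Z(g)=\emptyset $. Finally $ g $ is a genuine vertex: $ g\neq0 $ since $ Z(g)=X\setminus\operatorname{int}Z(f)\neq X $ (as $ \operatorname{int}Z(f)\neq\emptyset $), and $ g $ is a divisor of zero since $ \operatorname{int}Z(g)=X\setminus\overline{\operatorname{int}Z(f)}\neq\emptyset $ (this is exactly where the non-density of $ \operatorname{int}Z(f) $ is used), a nonempty open set which, by zero-dimensionality of $ X $, contains a nonempty clopen set whose characteristic function annihilates $ g $. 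By Theorem \ref{t-3.1}, $ \gnccx $ is complemented.

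An alternative, essentially equivalent route is to observe that perfect normality makes $ \Gamma(C(X)) $ complemented by the very same computation applied to the $ C(X) $-analogue of Theorem \ref{t-3.1} recorded just before Theorem \ref{t-3.20}, and then to transfer complementedness from $ \Gamma(C(X)) $ to $ \gnccx $ via Theorem \ref{t-3.20}. Either way, the only step that demands a little care is verifying that the candidate complement $ g $ is an honest vertex, i.e.\ a nonzero divisor of zero; and that step turns precisely on the non-density of $ \operatorname{int}Z(f) $ (equivalently $ f\neq 0 $), with everything else being formal manipulation of interiors and closures. I therefore expect no real obstacle beyond bookkeeping once the two hypotheses are used to realize $ X\setminus\operatorname{int}Z(f) $ as the zero-set of a function in $ \ccx $.
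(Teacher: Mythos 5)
Your proposal is correct and follows essentially the same argument as the paper: the paper likewise dismisses the comaximal half via Theorem \ref{t-4.10} and then complements a vertex $f$ by a $g$ with $Z(g)=X\setminus\operatorname{int}Z(f)$ obtained from perfect normality, the only cosmetic difference being that the paper runs the computation in $\Gamma(C(X))$ and transfers via Theorem \ref{t-3.20} (your ``alternative route''), and certifies that $g$ is a zero-divisor by noting $\emptyset\neq X\setminus Z(f)\subseteq Z(g)$ rather than by your non-density observation. Both verifications are valid, so there is nothing to fix.
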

	\begin{proof}
		Since $ X $ is not a $ P $-space, it follows from Theorem \ref{t-4.10} that $ \gccx $ is not a complemented graph. Since $ X $ is strongly zero-dimensional, to show that $ \gnccx $ is a complemented graph, it  is equivalent to showing in view of Theorem \ref{t-3.20} that $ \gncx $ is a complemented graph. For that purpose choose any vertex $ f  $ in the zero-divisor graph $ \Gamma(C(X))$ of the ring $ C(X)$. Therefore $ f $ is a nonzero divisor of zero in the ring $ C(X) $ and consequently, int$ Z(f)\neq \emptyset $ and int$ Z(f)\neq X $. Since $ X $ is perfectly normal, every closed subset of $ X $ is a zero-set in it. Hence there exists $ g\in C(X) $ such that $ Z(g)=X\setminus$int$Z(f) $. We observe that $ \emptyset\neq X\setminus Z(f)\subseteq X\setminus$int$Z(f)=Z(g) $. This implies that int$ Z(g)\neq\emptyset $ and hence $ g $ is a nonzero divisor of zero in $ C(X)$. In other words $ g $ is a vertex of $ \gncx $ and $ Z(f)\cup Z(g)=X $ with int$ Z(f)\ \cap $ int$ Z(g)=\emptyset $. This shows that $ g $ is orthogonal to $ f $ in the graph $ \gncx $. Thus $ \gncx $ is a complemented graph.
	\end{proof}
		The next example is of a finite (zero-dimensional) space $ X $ for which the zero-divisor graph of $ C(X)\ (=\ccx) $ is the same as the comaximal graph of $ C(X)\ (=\ccx) $.
		\begin{example}
		Let $ X=\{p,q\} $, a two member space. Then $ X $ is a discrete space and $ C(X) $ can be identified with the ring $ \R\times \R $ = direct product of the ring $ \R $ with itself. Here $ \{0\}\times \R $ and $ \R\times \{0\} $ are the only nonzero proper ideals in $ \R\times \R $. Hence these are the only maximal ideals in  $ \R\times\R $. Also $ C(X)=\ccx $. It is easy to check that the set $ \{(r,0):r\in \R\setminus\{0\}\}\cup\{(0,r):r\in \R\setminus\{0\}\} $ is identical to the set of all vertices in either of the two graphs $ \Gamma(C(X)) $ and $ \Gamma_2^{'}(C(X)) $. Furthermore, for any two distinct vertices $ f $ and $ g $ in any of these two graphs, $ f $ is adjacent to $ g $ in the zero-divisor graph $ \gnccx $ if and only if there exist $ r\neq 0 $, $ s\neq0 $ in $ \R $ such that $ f=(r,0) $ and $ g=(0,s) $ (or $ f=(0,s) $ and $ g=(r,0) $). This is the case if and only if no maximal ideal in $ \R\times\R $ contains both of $ f $ and $ g $, i.e., if and only if $ f $ is adjacent to $ g $ in the comaximal graph $  \Gamma_2^{'}(C(X)) $. Thus it is proved that the identity map
		
		\begin{alignat*}{2}
				I:&V_0(C(X))\rightarrow  V_2(C(X))\\
			&\ \ \ \ \ \ \ \ \ \ \ \ 	f\mapsto f
		\end{alignat*}
	is a graph isomorphism on $ \Gamma(C(X)) $ onto $ \Gamma_2^{'}(C(X)) $. Here $ V_0(C(X)) $ stands for the set of all vertices in $ \Gamma(C(X)) $ with an analogous meaning for $ V_2(C(X)) $.
		\end{example}
		If $ X $ contains finitely many points only, say $ |X|=n $ with $n \geq 3$, $ n\in \mathbb{N} $ then $ X $ is still a discrete space and $ C(X) $ can be identified with the ring $ \R\times\R\times...\times\R $ (direct product of $ \R $ with itself `n' times). It is easy to check that the set of all vertices in the zero-divisor graph $ \Gamma(C(X)) $ is identical to the set of all vertices in $ \gcx $. We may be tempted to believe that the identity map 	\begin{alignat*}{2}
			I:&\vn\rightarrow \vtccx \\
		&\ \ \ \ \ \ \ \ \ \ \ \ 	f\mapsto f
		\end{alignat*} is, this time also, a graph isomorphism. We now show that this is not the case. We shall demonstrate a negative answer to the above belief with the case $ n=3 $. The fact that identity is no longer a graph isomorphism on $ \Gamma(C(X)) $ onto $ \Gamma_2^{'}(C(X)) $, whenever $ |X|=n $ with $ n\in\mathbb{N} $ chosen arbitrary, $ n\geq3 $, can be established by analogous reasoning. For that purpose we simply observe that $ (0,1,0) $ and $ (1,0,0) $ are two different vertices in $ \R\times\R\times \R $ (equipped with both zero-divisor graph and comaximal graph respectively). These two vertices are surely adjacent in the zero-divisor graph of $ C(X) $ (with $ |X|=3 $). Since $ \R\times\R\times\{0\} $, is a maximal ideal in $ \R\times\R\times \R $ containing each of $ (0,1,0) $ and $ (1,0,0) $, it follows that these two vertices are not adjacent in the comaximal graph of $ C(X) $. Thus the identity map 
		\begin{alignat*}{2}
		I:&V_0(C(X))\rightarrow V_2(C(X)) \\
		&\ \ \ \ \ \ \ \ \ \ \ \ 	f\mapsto f
	\end{alignat*}
fails to preserve the adjacency relation and is hence not an isomorphism.	
	
	We are now going to show that if $ X $ is a discrete topological space containing atmost countably infinitely many points, then $ \Gamma(C(X))=\gnccx $ and $ \Gamma_2^{'}(C(X))=\gccx $ are indeed isomorphic as graphs, though of course, such an isomorphism is different from the identity isomorphism if $ |X|\geq3 $, as observed above. For that purpose we need to introduce a kind of quotient graph of an arbitrary graph $ G $ and then establish a result which tells under certain conditions a graph isomorphism between the quotient graphs $ \w{G} $ and $ \w{H} $ will imply a graph isomorphism between two graphs $ G $ and $ H $. This will ultimately lead to a graph isomorphism from $ \Gamma(C(X)) $ onto $ \Gamma_2^{'}(C(X)) $ for a discret space $ X $ with $ |X|\leq \aleph_0 $. Let us now take up the formal construction of the quotient $ \w{G} $ of a simple graph $ G $. Let $ V(G) $ be the set of all vertices in $ G $. For $ x\in V(G) $, set $ [x]=\{y\in V(G): x\sim y\} $. Define a binary relation ``$\thickapprox $'' on $ V(G) $ as follows: for $ x,y \in V(G) $, $ x\thickapprox y $ if and only if $ [x]=[y] $. Then ``$\thickapprox  $" is an equivalence relation on $ V(G) $. For $ x\in V(G) $, we write $ \w{x}=\{y\in V(G): x\thickapprox y\} $. Thus the quotient set $ V(G)/_\thickapprox =\{\w{x}:x\in V(G)\}$. For notational simplicity we write $ V(G)/_\thickapprox=\w{G} $. We now initiate a new graph, whose set of vertices is $ \w{G} $ and the adjacency relation is defined as follows: for $ \w{x} ,\ \w{y}\in \w{G}$ with $ \w{x}\neq\w{y} $, we write $ \w{x} \sim \w{y}$ (and call $ \w{x} $ and $ \w{y} $ adjacent) if and only if $ x\sim y $ in the original graph $ G $. The following result demonstrates that the definition of `$ \sim $' in $ \w{G} $ does not suffer from any ambiguity.
	\begin{theorem}\label{to-5.3}
		Let $ \w{x}\sim \w{y} $ in $ \w{G} $ and $ a\in \w{x},\ b\in \w{y} $. Then $ \w{a}\sim \w{b} $.
	\end{theorem}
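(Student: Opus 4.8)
The plan is to verify that the adjacency relation declared on $\w{G}$ is consistent with the rule used to define it, by unwinding the definitions. Recall that, for $\w{x}\neq\w{y}$, the statement $\w{x}\sim\w{y}$ means precisely that $x\sim y$ holds in $G$; so the hypothesis $\w{x}\sim\w{y}$ supplies two pieces of information, namely $\w{x}\neq\w{y}$ and $x\sim y$. Dually, to conclude $\w{a}\sim\w{b}$ it suffices to establish the two facts $\w{a}\neq\w{b}$ and $a\sim b$. The membership conditions $a\in\w{x}$ and $b\in\w{y}$ translate into $a\thickapprox x$ and $b\thickapprox y$, i.e. $[a]=[x]$ and $[b]=[y]$.

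First I would prove $a\sim b$ in $G$. Since $x\sim y$ we have $y\in[x]$, and because $[a]=[x]$ this gives $y\in[a]$, i.e. $a\sim y$. As $G$ is a simple graph its adjacency relation is symmetric, so $a\in[y]$; now $[b]=[y]$ yields $a\in[b]$, that is, $a\sim b$. In particular $a\neq b$, since $G$ has no loops.

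Next I would check $\w{a}\neq\w{b}$, arguing by contradiction: if $\w{a}=\w{b}$ then $a\thickapprox b$, hence $[a]=[b]$; combined with $[a]=[x]$ and $[b]=[y]$ this forces $[x]=[y]$, so $\w{x}=\w{y}$, contradicting the hypothesis $\w{x}\sim\w{y}$, which requires $\w{x}\neq\w{y}$. Therefore $\w{a}\neq\w{b}$, and together with $a\sim b$ the definition of the adjacency relation on $\w{G}$ gives $\w{a}\sim\w{b}$, as desired.

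There is no serious obstacle here; the only points needing a little care are that the argument uses both the symmetry and the irreflexivity of adjacency in the simple graph $G$, and that one must keep track of which part of ``$\w{x}\sim\w{y}$'' is the inequality $\w{x}\neq\w{y}$ of equivalence classes and which part is the underlying adjacency $x\sim y$. The real content of the lemma is simply that each class $\w{x}$ carries a well-defined ``neighborhood'' $[x]$, so that replacing $x,y$ by any representatives $a,b$ reproduces the same adjacency, which is exactly what makes the construction of $\w{G}$ meaningful.
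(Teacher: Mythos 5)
Your proof is correct and follows essentially the same route as the paper: use $[a]=[x]$ and $x\sim y$ to get $a\sim y$, then $[b]=[y]$ to get $a\sim b$, and conclude $\w{a}\sim\w{b}$. The only difference is that you also verify $\w{a}\neq\w{b}$ explicitly, a small point the paper leaves implicit.
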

\begin{proof}
	$ a\in\w{x}\Rightarrow [a]=[x] $. On the other hand, $ \w{x}\sim \w{y}\Rightarrow x\sim y $. It follows therefore that $ a\sim y $. Again $ b\in \w{y}\Rightarrow [b]=[y] $. Hence $ a\sim b $ and consequently $ \w{a}\sim \w{b} $.
\end{proof}
We designate $ \w{G} $ as the quotient of the graph $ G $.
\begin{theorem}\label{to-5.4}
	Let $ G_1 $ and $ G_2 $ be two simple graphs which are isomorphic under a graph isomorphism $ \psi: V(G_1)\rightarrow V(G_2)$. Then the map $ \Phi: \w{G}_1 \rightarrow \w{G}_2$ defined by $ \Phi(\w{x})=\w{\psi(x)} $ is a  graph isomorphism onto $ \w{G}_2 $. Furthermore, for any $ x\in G_1 $, $ \psi([x])=[\psi(x)] $ and $ |\w{x}|=|\w{\psi(x)}| $, $ |Y| $ designating the cardinal number of the set $ Y $.
\end{theorem}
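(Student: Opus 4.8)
The plan is to verify directly that the proposed map $\Phi\colon\w{G}_1\to\w{G}_2$, $\Phi(\w{x})=\w{\psi(x)}$, is well-defined, bijective, and adjacency-preserving in both directions, and along the way to record the two auxiliary identities $\psi([x])=[\psi(x)]$ and $|\w{x}|=|\w{\psi(x)}|$. The natural order is: first establish the auxiliary identity $\psi([x])=[\psi(x)]$, since everything else flows from it; then well-definedness; then injectivity and surjectivity; then the adjacency equivalence; and finally the cardinality statement.

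First I would prove $\psi([x])=[\psi(x)]$ for every $x\in V(G_1)$. Since $\psi$ is a graph isomorphism, $y\sim x$ in $G_1$ if and only if $\psi(y)\sim\psi(x)$ in $G_2$; hence $y\in[x]$ if and only if $\psi(y)\in[\psi(x)]$, which gives $\psi([x])=[\psi(x)]$ because $\psi$ is a bijection. Next, for well-definedness suppose $\w{x}=\w{x'}$, i.e.\ $[x]=[x']$; applying $\psi$ and the identity just proved yields $[\psi(x)]=\psi([x])=\psi([x'])=[\psi(x')]$, so $\w{\psi(x)}=\w{\psi(x')}$ and $\Phi$ is well-defined. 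The same chain of equivalences run backwards (using that $\psi$ is onto and that $\psi^{-1}$ is also a graph isomorphism, so $\psi^{-1}([z])=[\psi^{-1}(z)]$) shows that $\Phi(\w{x})=\Phi(\w{x'})$ forces $[x]=[x']$, i.e.\ $\Phi$ is injective; surjectivity is immediate since any $\w{z}\in\w{G}_2$ equals $\Phi(\w{\psi^{-1}(z)})$.

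For the adjacency part, take $\w{x}\neq\w{y}$ in $\w{G}_1$. Then $\w{\psi(x)}\neq\w{\psi(y)}$ by injectivity of $\Phi$, so $\w{\psi(x)}\sim\w{\psi(y)}$ is governed (by the definition of adjacency in the quotient) by whether $\psi(x)\sim\psi(y)$ in $G_2$, which by the isomorphism property is equivalent to $x\sim y$ in $G_1$, which in turn is exactly the condition for $\w{x}\sim\w{y}$ in $\w{G}_1$. Hence $\Phi$ preserves and reflects adjacency, so $\Phi$ is a graph isomorphism. Finally, for the cardinality claim, note that $a\thickapprox x$ in $G_1$ means $[a]=[x]$, which by the auxiliary identity is equivalent to $[\psi(a)]=[\psi(x)]$, i.e.\ $\psi(a)\thickapprox\psi(x)$; thus $\psi$ restricts to a bijection from $\w{x}$ onto $\w{\psi(x)}$, giving $|\w{x}|=|\w{\psi(x)}|$.

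I do not expect a genuine obstacle here: the statement is essentially a bookkeeping exercise showing that passing to the quotient is functorial with respect to graph isomorphisms. The only point requiring a little care is making sure the adjacency argument is symmetric — that $\Phi$ not only sends edges to edges but also reflects non-edges — which is handled cleanly by invoking that $\psi^{-1}$ is a graph isomorphism as well, so the equivalences above are genuine iff-statements rather than one-way implications. Everything else is routine.
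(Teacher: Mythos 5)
Your proposal is correct and follows essentially the same route as the paper: the same chain of equivalences $[\psi(x)]=[\psi(y)]\Leftrightarrow[x]=[y]$ for well-definedness and injectivity, the same four-step iff chain for adjacency, and the observation that $\psi$ restricts to a bijection $\w{x}\to\w{\psi(x)}$ for the cardinality claim (which the paper leaves as "straightforward"). Your version is merely a bit more explicit about the auxiliary identity $\psi([x])=[\psi(x)]$ and about surjectivity.
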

\begin{proof}
Let $ \w{x},\ \w{y}\in \w{G}_1 $ be such that $ \w{\psi(x)}=\w{\psi(y)} $, then $ [\psi(x)]=[\psi(y)] $. The last relation holds if and only if $ [x]=[y] $ because of the fact that $ \psi $ is a graph isomorphism from $ V(G_1) $ onto $ V(G_2) $. This is true when and only when $ \w{x}=\w{y} $. This settles that the map $ \Phi: \w{G}_1\rightarrow \w{G}_2 $ defined above is a bijection between the vertices of these two quotient graphs. Furthermore, $ \w{x}\sim\w{y} $ in $ \w{G}_1 $ ($ x,y\in G_1 $) $ \Leftrightarrow x \sim y$ in $ G_1 \Leftrightarrow \psi(x)\sim \psi(y)$ in $ G_2 $ (as $ \psi: V(G_1)\rightarrow V(G_2) $ is a bijection) $ \Leftrightarrow \w{\psi(x)}\sim \w{\psi(y)} $ in $ \w{G}_2 \Leftrightarrow \Phi(\w{x})\sim \phi(\w{y})$ in $ G_2 $. Thus it is proved that $ \Phi: \w{G}_1\rightarrow \w{G}_2 $ is a graph isomorphism. The remaining parts of the theorem are straightforward consequence of the fact that $ \psi: V(G_1)\rightarrow V(G_2) $ is a graph isomorphism.
\end{proof}
The following Banach-Stone like theorem ascertaining that under certain conditions, for a pair of graphs $ G_1 $ and $ G_2 $, the existence of a graph isomorphism: $ \w{G}_1 \rightarrow \w{G}_2$ implies that of a graph isomorphism: $ G_1\rightarrow G_2 $.
\begin{theorem}\label{to-5.5}
	 Suppose $ G_1 $ and $ G_2 $ are two simple graphs with the following properties:
	 \begin{enumerate}
	 	\item  There exists a graph isomorphism $ \Phi:\w{G}_1\rightarrow \w{G}_2 $ and \item For each $ \w{x}\in \w{G}_1 $ $ ( $where $ x\in G_1$$ ) $, $ |\Phi(\w{x})|=|\w{x}| $.  Then there can be defined a graph isomorphism from $ G_1 $ onto $ G_2 $.
	 \end{enumerate}
\end{theorem}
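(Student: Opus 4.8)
The plan is to build the desired isomorphism $\psi\colon V(G_1)\to V(G_2)$ by gluing together bijections chosen separately on each equivalence class. First I would record that, by the definition of ``$\thickapprox$'', the vertex set is the disjoint union $V(G_1)=\bigsqcup_{\w{x}\in\w{G}_1}\w{x}$ of its $\thickapprox$-classes, and likewise $V(G_2)=\bigsqcup_{\w{y}\in\w{G}_2}\w{y}$; since $\Phi$ is a bijection, the classes $\{\Phi(\w{x}):\w{x}\in\w{G}_1\}$ exhaust $\w{G}_2$, so in fact $V(G_2)=\bigsqcup_{\w{x}\in\w{G}_1}\Phi(\w{x})$. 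Hypothesis (2) says $|\w{x}|=|\Phi(\w{x})|$ for every $\w{x}$, so for each $\w{x}\in\w{G}_1$ I may choose a bijection $\psi_{\w{x}}\colon\w{x}\to\Phi(\w{x})$. Define $\psi\colon V(G_1)\to V(G_2)$ by $\psi(a)=\psi_{\w{a}}(a)$. Because the $\w{x}$ partition $V(G_1)$ and the $\Phi(\w{x})$ partition $V(G_2)$, the map $\psi$ is a well-defined bijection, and moreover the $\thickapprox$-class of $\psi(a)$ in $G_2$ is exactly $\Phi(\w{a})$, i.e.\ $\w{\psi(a)}=\Phi(\w{a})$ for every $a\in V(G_1)$.

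Next I would verify that $\psi$ respects adjacency. The key elementary point is that $G_1$ is a simple graph, so no vertex is adjacent to itself; hence if $a\sim b$ in $G_1$ then $b\in[a]$ while $b\notin[b]$, so $[a]\neq[b]$ and therefore $\w{a}\neq\w{b}$. Thus $a\sim b$ in $G_1$ implies $\w{a}\sim\w{b}$ in $\w{G}_1$ by the definition of adjacency in the quotient graph. Applying the graph isomorphism $\Phi$ gives $\Phi(\w{a})\sim\Phi(\w{b})$ in $\w{G}_2$, that is, $\w{\psi(a)}\sim\w{\psi(b)}$ (these are distinct classes since $\Phi$ is injective). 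Since $\psi(a)\in\w{\psi(a)}$ and $\psi(b)\in\w{\psi(b)}$, the definition of adjacency in $\w{G}_2$ applied to these representatives — unambiguous by Theorem \ref{to-5.3} — yields $\psi(a)\sim\psi(b)$ in $G_2$.

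Finally, for the reverse implication I would exploit the symmetry of the situation: $\Phi^{-1}\colon\w{G}_2\to\w{G}_1$ is a graph isomorphism with $|\Phi^{-1}(\w{y})|=|\w{y}|$ for all $\w{y}\in\w{G}_2$, and $\psi^{-1}\colon V(G_2)\to V(G_1)$ is precisely the map assembled from the bijections $\psi_{\w{x}}^{-1}\colon\Phi(\w{x})\to\w{x}$. Running the argument of the previous paragraph verbatim for $\Phi^{-1}$ and $\psi^{-1}$ shows that $\psi(a)\sim\psi(b)$ in $G_2$ implies $a\sim b$ in $G_1$. Hence $\psi$ is a graph isomorphism from $G_1$ onto $G_2$. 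I expect the only delicate point to be exactly this bookkeeping step — confirming that the \emph{locally} chosen bijections $\psi_{\w{x}}$ fit together into a map that \emph{globally} preserves and reflects adjacency — and it is here that Theorem \ref{to-5.3}, the well-definedness of ``$\sim$'' on the quotient, carries the load; beyond that the proof is a routine gluing argument.
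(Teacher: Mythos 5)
Your proposal is correct and follows essentially the same route as the paper's own proof: partition each vertex set into $\thickapprox$-classes, use hypothesis (2) to choose a bijection on each class, glue these into a global bijection $\psi$, and then transfer adjacency through $\Phi$ using the facts that adjacent vertices of a simple graph lie in distinct classes and that adjacency on the quotient is well defined (Theorem \ref{to-5.3}). The only cosmetic difference is that you handle the reverse implication by symmetry via $\Phi^{-1}$ and $\psi^{-1}$, where the paper simply reverses the chain of equivalences.
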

\begin{proof}
	Let $ \w{x}\in \w{G}_1 $, then $ \w{x} $ is actually an equivalence class of elements in $ G_1 $. We choose exactly one element $ x_\lambda $ from this equivalence class and this we do for each $ \w{x}\in \w{G}_1 $. This defines a set $ \{x_\lambda:\lambda\in \Lambda\} $ of members of $ G_1 $. Thus for $ \lambda,\mu\in \Lambda $, $ \w{x}_\lambda=\w{x}_\mu $ if and  only if $ \lambda=\mu $. By our hypothesis (2), there exists for each $ \lambda\in \Lambda $, a bijection $ \Phi_\lambda:\w{x}_\lambda\rightarrow \Phi(\w{x}_\lambda) $. It is also clear that $ \w{G}_2=\{\Phi(\w{x}_\lambda):\lambda\in \Lambda\} $. Define a map $ \psi: V(G_1)\rightarrow V(G_2) $  as follows: if $ x\in V(G_1) $, then there exists a unique $ \lambda\in \Lambda $ such that $ x\in \w{x}_\lambda $. We set $ \psi(x) =\Phi_\lambda(x)$. Then $ \psi $  is a well defined map from $ V(G_1) $ onto $ V(G_2) $.
	
	To show that $ \psi $ is a one-to-one map, choose any two distinct vertices $ a $ and $ b $ from $ V(G_1) $. If $ a $ and $ b $ are contained in the same equivalence class $ \w{x}_\lambda $, $ \lambda\in \Lambda $, then as $ \Phi_\lambda $ is one-to-one it follows that $ \Phi_\lambda(a)\neq \Phi_\lambda(b) $ and this implies that $ \psi(a)\neq\psi(b) $. Assume therefore that $ a $ and $ b $ belong to different equivelence classes: $a\in \w{x}_\lambda $ and $b\in \w{x}_\mu $ with $ \lambda\neq\mu $ in $ \Lambda $. As $ \Phi:\w{G}_1\rightarrow \w{G}_2  $ is a  one-to-one map, this implies that $ \Phi(\w{x}_\lambda)\neq \Phi(\w{x}_\mu) $, furthermore, $ \psi(a)=\Phi_\lambda(a)\in \Phi(\w{x}_\lambda) $. Analogously $ \psi(b)\in \Phi(\w{x}_\mu) $. Since the equivalence classes $ \Phi(\w{x}_\lambda) $ and $ \Phi(\w{x}_\mu) $ in $ G_2 $ are disjoint, it follows that $ \psi(a)\neq \psi(b) $. Thus $ \psi $ is a bijection on $ V(G_1) $ onto $ V(G_2) $. To complete the proof it remains to show that $\psi$ preserves the adjacency relation. So let $ a $ and $ b $ be adjacent vertices in $ G_1 $, i.e., $ a\sim b $. Then there exist $ \lambda,\ \mu\in \Lambda $ such that $ a\in \w{x}_\lambda $ and $ b\in \w{x}_\mu $. Since no two elements in the same equivalence class $ \w{x} $ in $ V(G_1) $ can be adjacent in the original graph $ G_1 $, a fact which can be easily checked because no vertices in $ G_1 $ can be adjacent to itself, it follows that $ \lambda\neq\mu $. This yields that $ \w{x}_\lambda\sim \w{x}_\mu $ in $ \w{G}_1 $. Since $ \Phi $ is graph isomorphism, this implies that $ \Phi(\w{x}_\lambda)\sim \Phi(\w{x}_\mu) $ in $ \w{G}_2 $. But as $ \psi(a)=\Phi_\lambda(a)\in \Phi(\w{x}_\lambda) $ and analogously $ \psi(b)\in \Phi(\w{x}_\mu) $ it follows that $ \psi(a)\sim\psi(b) $ in $ V(G_2) $. Conversely, if $ \psi(a) \sim \psi(b) $ in $ V(G_2) $ for $ a,\ b\in V(G_1) $, then it can be proved by reversing back the above chain of arguments that $ a\sim b $ in $ V(G_1) $. Thus $ \psi: V(G_1)\rightarrow V(G_2) $ is a graph isomorphism from the graph $ G_1 $ onto the graph $ G_2 $.
\end{proof}
\begin{theorem}\label{to-5.6}
	Let $ X $ be a discrete topological space. Let $ G_1 $ and $ G_2 $ designate the zero-divisor graph of $ C(X) $ and the comaximal graph of $ C(X) $ respectively. Then the quotient graphs $ \w{G}_1 $ and $ \w{G}_2 $ are graph isomorphic.
\end{theorem}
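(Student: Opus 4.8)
The plan is to exploit that for a discrete space $X$ the ring $C(X)$ is simply the full product $\R^X$, so the two graphs share \emph{the same} vertex set and carry adjacency relations that are interchanged by set-complementation. Concretely, I would first record the elementary facts about $\R^X$: an $f$ is a unit iff $Z(f)=\emptyset$, and a nonzero $f$ is a divisor of zero iff $Z(f)\neq\emptyset$. Writing $\mathrm{Coz}(f)=X\setminus Z(f)$, this gives $V(G_1)=V(G_2)=V:=\{f\in C(X):\emptyset\subsetneq\mathrm{Coz}(f)\subsetneq X\}$, together with the descriptions: $f\sim g$ in $G_1\iff\mathrm{Coz}(f)\cap\mathrm{Coz}(g)=\emptyset$, and $f\sim g$ in $G_2\iff\mathrm{Coz}(f)\cup\mathrm{Coz}(g)=X$ (the latter being just $\langle f\rangle+\langle g\rangle=C(X)$ rewritten). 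Since the complementation $A\mapsto X\setminus A$ turns ``$\cap=\emptyset$'' into ``$\cup=X$'', this already indicates where the isomorphism of quotients will come from.

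Next I would compute the equivalence classes $\widehat{f}$. For $f\in V$, its neighbourhood in $G_1$ is $\{h\in V:\mathrm{Coz}(h)\subseteq Z(f)\}$ and in $G_2$ is $\{h\in V:Z(h)\subseteq\mathrm{Coz}(f)\}$; each of these depends only on $\mathrm{Coz}(f)$, and conversely, if $\mathrm{Coz}(f)\neq\mathrm{Coz}(g)$ one separates the two neighbourhoods using $1_{\{x\}}$ (in $G_1$) or $1_{X\setminus\{x\}}$ (in $G_2$) for a point $x$ lying in exactly one of $Z(f),Z(g)$ — this is where $|X|\geq 2$ is used, the case $|X|\leq 1$ being handled separately since then $C(X)$ is a field or the zero ring and both graphs are empty. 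Hence $\widehat{f}\mapsto\mathrm{Coz}(f)$ gives, for $i=1,2$, a bijection from $\widehat{G}_i$ onto the family $\mathcal{S}$ of all nonempty proper subsets of $X$ (surjectivity via $A=\mathrm{Coz}(1_A)$).

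Finally I would define $\Phi:\widehat{G}_1\to\widehat{G}_2$ by $\Phi(\widehat{f})=\widehat{1_{Z(f)}}$ — equivalently, the complementation map on $\mathcal{S}$ under the two identifications above — note that $1_{Z(f)}$ again lies in $V$, check well-definedness and bijectivity (complementation is an involution of $\mathcal{S}$), and verify adjacency-preservation through the chain $\mathrm{Coz}(f)\cap\mathrm{Coz}(g)=\emptyset\iff(X\setminus\mathrm{Coz}(f))\cup(X\setminus\mathrm{Coz}(g))=X$, keeping in mind that the ``$\widehat{f}\neq\widehat{g}$'' clause in the definition of adjacency in a quotient graph is automatic in each case. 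I do not anticipate a genuine obstacle; the only delicate points are confirming that the $\thickapprox$-class of a vertex is determined \emph{precisely} by its cozero-set and by nothing coarser (the role of the indicator functions and of $|X|\geq 2$), and disposing of the trivial small-cardinality cases.
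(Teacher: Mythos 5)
Your proposal is correct and follows essentially the same route as the paper: identify the $\thickapprox$-classes in both graphs with the cozero-sets (the paper writes $\w{f}=\w{\chi}_{X\setminus Z(f)}$), and define $\Phi(\w{f})=\w{\chi_{Z(f)}}$, with adjacency preserved because complementation interchanges ``$\mathrm{Coz}(f)\cap \mathrm{Coz}(g)=\emptyset$'' and ``$Z(f)\cap Z(g)=\emptyset$''. Your extra care in verifying that the class is determined exactly by the cozero-set, and in disposing of $|X|\le 1$, only makes explicit what the paper leaves as ``it is clear that''.
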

\begin{proof}
	It is easy to check that $ V(G_1) =$ the set of all vertices in $ G_1 =V(G_2)=$ the set of all vertices in $ G_2=\{f\in C(X):\emptyset\neq Z(f)\ \text{and}\ Z(f)\neq X\} $. It is clear that for any $ f\in V(G_1)=V(G_2) $, $ \w{f}=\w{\chi}_{X\setminus Z(f)} =$ the characteristic function of $ X\setminus Z(f) $. Therefore $ \w{G}_1=\w{G}_2 =\{\w{\chi}_{X\setminus Z(f)}: f$ is a nonzero non-unit in $C(X)\}$. Let $ \Phi:\w{G}_1\rightarrow \w{G}_2 $ be the map defined as follows: $ \Phi(\w{\chi}_{X\setminus Z(f)})=\w{\chi_{Z(f)}} $. Then clearly $ \Phi $ is a bijection between the vertices $ V(\w{G}_1) $ and $ V(\w{G}_2) $ of the quotient graphs $ \w{G}_1 $ and $ \w{G}_2 $. Furthermore, for nonzero non-units $ f,g $ in $ C(X) $, $ \w{\chi}_{X\setminus Z(f)}\sim \w{\chi}_{X\setminus Z(g)} $ in $ \w{G}_1 $ if and only if $ \w{f}\sim \w{g} $ (recall that $ \w{f}=\w{\chi}_{X\setminus Z(f)} $, a relation obtained earlier) if and only if $ Z(f) \cup Z(g)=X$ if and only if $ (X\setminus Z(f))\cap (X\setminus Z(g))=\emptyset $ if and only if $ Z({\chi}_{ Z(f)})\cap Z({\chi}_{Z(g)})=\emptyset$. The last relation is true if and only if $ {\chi}_{ Z(f)}\sim {\chi}_{Z(g)} $ in the comaximal graph $ G_2 $. Surely this is the case when and only when $ {\w{\chi}_{Z(f)}}\sim {\w{\chi}_{Z(g)}} $ in $ \w{G}_2 $, i.e., if and only if $\Phi(\w{\chi}_{X\setminus Z(f)})\sim \Phi(\w{\chi}_{X\setminus Z(g)})$ in $ \w{G}_2 $. Thus it is proved that both of $\Phi:\w{G}_1\rightarrow \w{G}_2$ and $ \Phi^{-1}: \w{G}_2\rightarrow \w{G}_1 $ preserve the adjacency relation. Hence $ \Phi: \w{G}_1\rightarrow \w{G}_2 $ defined above is a graph isomorphism.
\end{proof}
\begin{remark}\label{ro-5.7}
	Since for a $ P $-space $ X $, the nonzero non-units in $ C(X) $ are the same as the nonzero divisors of zero in the ring and for $ f\in C(X) $, $ Z(f) $ is a clopen set in $ X $, a careful look into the above proof yields the following more general fact: If $ X $ is a $ P $-space and $ G_1 $ and $ G_2 $ are the zero-divisor graph and comaximal graph of $ C(X) $ respectively, then the quotient graphs $ \w{G}_1 $ and $ \w{G}_2 $ are isomorphic.
\end{remark}
\begin{remark}\label{to-5.8}
 In the above Theorem \ref{to-5.6}, for a discrete topological space $ X $, in the quotient graph $ \w{G}_1 $, for any $ f\in C(X) $, $ \w{f} =\{g\in C(X): Z(g)=Z(f)\}$. Therefore the cardinal number of the set $ \w{f}=|\w{f}|=|\R^{X\setminus Z(f)}|=c^{|X\setminus Z(f)|} $. It is easy to check that $ |\w{\chi}_{ Z(f)}| $ in the quotient graph $ \w{G}_2=|\R^{Z(f)}|=c^{|Z(f)|} $. If we combine Theorem \ref{to-5.5} and Theorem \ref{to-5.6} and take into consideration for any $ f\in C(X) $, the cardinal number $ |\w{f}| $ and $ |\w{\chi}_{Z(f)}| $ in the quotient graphs $ \w{G}_1 $ and $ \w{G}_2 $ respectively, then this yields the following result:
\end{remark}
\begin{theorem}\label{to-5.9}
	Suppose $ G_1 $ and $ G_2 $ are respectively the zero-divisor graph and the comaximal graph of $ C(X) $ with $ X $ a discrete topological space. Suppose further that for any $ f\in C(X) $ with $ f\neq0 $ and $ Z(f)\neq\emptyset $, $ c^{|Z(f)|}=c^{|X\setminus Z(f)|} $, here $ c $ is the cardinal number of the continuum. Then $ G_1 $ and $ G_2 $ are isomorphic as graphs.
\end{theorem}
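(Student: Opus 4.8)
I would prove Theorem \ref{to-5.9} by combining the isomorphism of quotient graphs from Theorem \ref{to-5.6} with the lifting principle of Theorem \ref{to-5.5}; the standing hypothesis $c^{|Z(f)|}=c^{|X\setminus Z(f)|}$ is exactly what is needed to meet the cardinality side-condition $(2)$ of Theorem \ref{to-5.5}.

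The first step is to record, via Theorem \ref{to-5.6}, that for the discrete space $X$ the quotient graphs $\widehat{G}_1$ and $\widehat{G}_2$ are isomorphic, the isomorphism being the explicit bijection $\Phi(\widehat{\chi}_{X\setminus Z(f)})=\widehat{\chi}_{Z(f)}$ as $f$ runs over the nonzero non-units of $C(X)$. From (the proof of) that theorem one also has $V(G_1)=V(G_2)=\{f\in C(X):\emptyset\neq Z(f)\neq X\}$ and, in $\widehat{G}_1$, $\widehat{f}=\widehat{\chi}_{X\setminus Z(f)}$, so that every vertex of $\widehat{G}_1$ is of the stated form. This provides hypothesis $(1)$ of Theorem \ref{to-5.5}.

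The second step is the cardinality check, hypothesis $(2)$. Fix a vertex $\widehat{x}=\widehat{f}$ of $\widehat{G}_1$, so $\emptyset\neq Z(f)\neq X$. Since $X$ is discrete, the class $\widehat{f}=\{g\in C(X):Z(g)=Z(f)\}$ is in bijection with the set of all maps $X\setminus Z(f)\to\R\setminus\{0\}$, whence $|\widehat{f}|=c^{|X\setminus Z(f)|}$; similarly $|\Phi(\widehat{f})|=|\widehat{\chi}_{Z(f)}|=c^{|Z(f)|}$ (these are precisely the cardinals computed in Remark \ref{to-5.8}). By the hypothesis of the theorem $c^{|Z(f)|}=c^{|X\setminus Z(f)|}$, so $|\Phi(\widehat{x})|=|\widehat{x}|$ for every vertex $\widehat{x}$ of $\widehat{G}_1$.

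Both hypotheses of Theorem \ref{to-5.5} now hold, so that theorem yields a graph isomorphism $\psi:V(G_1)\to V(G_2)$ — concretely, one picks a representative $x_\lambda$ from each $\thickapprox$-class, uses hypothesis $(2)$ to choose a bijection $\Phi_\lambda:\widehat{x}_\lambda\to\Phi(\widehat{x}_\lambda)$ for each $\lambda$, and glues the $\Phi_\lambda$ together — and this $\psi$ is the required isomorphism $G_1\cong G_2$. There is no real obstacle here: the statement is a formal consequence of Theorems \ref{to-5.6} and \ref{to-5.5}, and the only point demanding a moment's care is the cardinal arithmetic of the preceding paragraph, in particular that $c^{|X\setminus Z(f)|}$ collapses to $c$ when $X\setminus Z(f)$ is finite and nonempty.
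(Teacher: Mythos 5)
Your proposal is correct and follows exactly the route the paper takes: the paper derives Theorem \ref{to-5.9} by combining Theorem \ref{to-5.6} (isomorphism of the quotient graphs) with Theorem \ref{to-5.5} (lifting to an isomorphism of the original graphs), using the cardinal computations $|\widehat{f}|=c^{|X\setminus Z(f)|}$ and $|\widehat{\chi}_{Z(f)}|=c^{|Z(f)|}$ recorded in Remark \ref{to-5.8} to verify the side-condition. Your cardinality check is, if anything, slightly more careful than the paper's (which identifies $\widehat{f}$ with $\mathbb{R}^{X\setminus Z(f)}$ rather than with the nowhere-vanishing functions), but the argument is the same.
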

Now if the discrete space $ X $ is atmost countable then for any $ f\in C(X) $, $ f\neq0 $, $ Z(f)\neq\emptyset $, $ |Z(f)|\leq\aleph_0 $ and $ |X\setminus Z(f)|\leq\aleph_0 $. Consequently, $ c^{|Z(f)|}=c^{|X\setminus Z(f)|}=c $. Hence the following proposition results:
\begin{theorem}\label{to-5.10}
	If the discrete space $ X $ is atmost countable $ ( $i.e., $ X $ is either finite or a countably infinite space$ ) ,$ then the zero-divisor graph $ G_1 $ and the comaximal graph $ G_2 $ of the ring $ C(X) $ are isomorphic.
\end{theorem}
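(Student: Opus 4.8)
The plan is to obtain Theorem \ref{to-5.10} as an immediate corollary of Theorem \ref{to-5.9}, the only work being to check that the cardinal hypothesis of that theorem is automatically met when $ X $ is at most countable. First I would recall that a vertex $ f $ of either $ G_1 $ or $ G_2 $ satisfies $ f\neq 0 $ and $ Z(f)\neq\emptyset $; since $ f\neq 0 $ forces $ X\setminus Z(f)\neq\emptyset $, both $ Z(f) $ and $ X\setminus Z(f) $ are nonempty subsets of $ X $, hence each has cardinality either a positive integer or $ \aleph_0 $.

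Next I would record the elementary fact that for any nonempty at most countable set $ S $ one has $ c^{|S|}=c $: if $ |S|=n $ with $ n\in\mathbb{N} $, $ n\geq 1 $, then $ c^{n}=c $, while if $ |S|=\aleph_0 $ then $ c^{\aleph_0}=(2^{\aleph_0})^{\aleph_0}=2^{\aleph_0\cdot\aleph_0}=2^{\aleph_0}=c $. Applying this with $ S=Z(f) $ and with $ S=X\setminus Z(f) $ gives $ c^{|Z(f)|}=c=c^{|X\setminus Z(f)|} $ for every vertex $ f $.

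Finally, this verifies verbatim the hypothesis of Theorem \ref{to-5.9}, which then delivers a graph isomorphism of $ G_1 $ onto $ G_2 $, i.e., the conclusion. There is no genuine obstacle in this argument; the only subtlety worth flagging is that one must note $ Z(f) $ and $ X\setminus Z(f) $ are both nonempty for a vertex $ f $ (so that the exponents appearing in the base-$ c $ powers are at least $ 1 $, and one avoids the degenerate value $ c^{0}=1\neq c $), and that — as the $ |X|=3 $ discussion already shows — the isomorphism produced is not the identity map but the one assembled through Theorem \ref{to-5.5}, Theorem \ref{to-5.6} and Theorem \ref{to-5.9}.
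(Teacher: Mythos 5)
Your proposal is correct and is essentially the paper's own argument: the paper likewise deduces Theorem \ref{to-5.10} from Theorem \ref{to-5.9} by observing that for a vertex $f$ both $Z(f)$ and $X\setminus Z(f)$ are at most countable, so $c^{|Z(f)|}=c^{|X\setminus Z(f)|}=c$. Your extra remark that both sets are nonempty (ruling out the degenerate exponent $0$) is a small precision the paper leaves implicit.
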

An explicit representation for an isomorphism between $ G_1=\gncx=\gnccx $ and $ G_2=\gcx=\gccx $ with $ |X|=3 $ is provided by the following diagram.

\begin{example}\label{e-0.5}
	Suppose $ |X|=3 $, then $ G_1=\gncx=\gnccx $ and $G_2= \gcx=\gccx $ are isomorphic.
\end{example}
\begin{proof}
	$ C(X) = \mathbb{R}\times\mathbb{R}\times\mathbb{R}$. $ V(G_1)=V(G_2)=\{(x,y,z)\in\mathbb{R}^3: \text{x,\ y,\ z are not all simultaneously equal to zero and not all nonzero}\}=V $, say. Then	$ \widehat{G}_1= \{\widehat{\xi}: \xi=(r_1,r_2,r_3)\in V: r_1,r_2,r_3\in\{0,1\}\}=\widehat{G}_2 $. Here $ |\widehat{G}_1|=|\widehat{G}_2|=6 $. Define $ \Phi: \widehat{G}_1\rightarrow \widehat{G}_2 $ as follows $ \Phi(\widehat{(r_1,r_2,r_3)})=\widehat{(s_1,s_2,s_3)} $, where $ s_i=0$ if $r_i=1 $ and $ s_i=1$ if $r_i=0 $. This is the desired graph isomorphism.

\begin{figure*}[!]
	\begin{tikzpicture}
		\draw (-2.,0.)-- (0.5,0);
		\draw(-2,0)--(-.75,2.16);
		\draw(-.75,2.16)--(.5,0);
		\draw (-.75,2.16)--(-.75,4.);
		
		\draw(-2.,0)--(-3.6,-1.2);
		\draw (.5,0)--(2.1,-1.2);
		
		\draw (-.75,4) node[anchor=south west] {$\widehat{(1,0,1)}$};
		\draw (-.75,2.16) node[anchor=south west] {$\w{(0,1,0)}$};
		\draw (-2,0) node[anchor=south east] {$\w{(0,0,1)}$};
		
		\draw (-3.6,-1.2) node[anchor=north west] {$\w{(1,1,0)}$};
		\draw (2.1,-1.2) node[anchor=north east] {$\w{(0,1,1)}$};
		\draw (.5,0) node[anchor=south west] {$\w{(1,0,0)}$};
		
		\draw (-.75,-2) node[] {$\w{G_1}$};
		
		\draw (4.75,0.)-- (7.25,0);
		\draw(4.75,0)--(6.,2.16);
		\draw(3.15,-1.2)--(4.75,0);
		\draw(6.,2.16)--(7.25,0);
		\draw (6.,2.16)--(6.,4.);
		\draw (7.25,0)--(8.85,-1.2);
		
		\draw (6.25,4) node[anchor=south west] {$\widehat{(0,1,0)}$};
		\draw (6.25,2.16) node[anchor=south west] {$\w{(1,0,1)}$};
		\draw (5,0) node[anchor=south east] {$\w{(1,1,0)}$};
		
		\draw (3.4,-1.2) node[anchor=north west] {$\w{(0,0,1)}$};
		\draw (9.1,-1.2) node[anchor=north east] {$\w{(1,0,0)}$};
		\draw (7.5,0) node[anchor=south west] {$\w{(0,1,1)}$};
		
		\draw (6.25,-2) node[] {$\w{G_2}$};
		
		\begin{scriptsize}
			\draw [fill=black] (-2,0) circle (2.5pt);
			\draw [fill=black] (.5,0) circle (2.5pt);
			\draw [fill=black] (-.75,2.16) circle (2.5pt);
			\draw [fill=black] (-.75, 4) circle (2.5pt);
			\draw [fill=black] (-3.6,-1.2) circle (2.5pt);
			\draw [fill=black] (2.1,-1.2) circle (2.5pt);
			
			\draw [fill=black] (4.75,0.) circle (2.5pt);
			\draw [fill=black] (7.25,0) circle (2.5pt);
			\draw [fill=black] (6.,2.16) circle (2.5pt);
			\draw [fill=black] (3.15,-1.2) circle (2.5pt);
			\draw [fill=black] (8.85,-1.2) circle (2.5pt);
			\draw [fill=black] (6.,4.) circle (2.5pt);
		\end{scriptsize}
	\end{tikzpicture}
\end{figure*}
\end{proof}

\newpage
The next theorem shows that the countable hypothesis of the set in Theorem \ref{to-5.10} can not be dropped.
\begin{theorem}\label{to-5.11}
	We assume continumm hypothesis $($CH $)$. Suppose $ X $ is an uncountable discrete topological space. Then the zero-divisor graph $ G_1=\Gamma(C(X)) $ and the comaximal graph $ G_2=\Gamma_2^{'}(C(X)) $ of $ C(X) $ are not isomorphic.
\end{theorem}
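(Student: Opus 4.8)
The plan is to deduce the non-isomorphism from a cardinality incompatibility between the quotient graphs, using the fact (Theorem \ref{to-5.4}) that any graph isomorphism $G_1\to G_2$ automatically transports both the twin relation ``$\thickapprox$'' and the cardinalities of the twin classes. So I would first record the explicit structure of $\widehat{G}_1$ and $\widehat{G}_2$: since $X$ is discrete, $C(X)=\mathbb{R}^X$, the common vertex set of $G_1$ and $G_2$ is $V=\{f:\emptyset\neq Z(f)\subsetneq X\}$, and in either graph $f\thickapprox g$ precisely when $Z(f)=Z(g)$. Thus the $\thickapprox$-classes, which I write $v_A$ for $\emptyset\neq A\subsetneq X$ (so $v_A=\{g\in V:Z(g)=A\}$), are the same subsets of $V$ in both graphs, and since a function with zero set $A$ is exactly a member of $(\mathbb{R}\setminus\{0\})^{X\setminus A}$ one has $|v_A|=c^{\,|X\setminus A|}$ (cf.\ Remark \ref{to-5.8}). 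Adjacency reads $v_A\sim v_B\Leftrightarrow A\cup B=X$ in $\widehat{G}_1$, giving $\deg_{\widehat{G}_1}(v_A)=2^{|A|}-1$, and $v_A\sim v_B\Leftrightarrow A\cap B=\emptyset$ in $\widehat{G}_2$, giving $\deg_{\widehat{G}_2}(v_A)=2^{|X\setminus A|}-1$.

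Next I would argue by contradiction: assume $\psi\colon G_1\to G_2$ is a graph isomorphism, so by Theorem \ref{to-5.4} there is an induced graph isomorphism $\Phi\colon\widehat{G}_1\to\widehat{G}_2$ with $|\Phi(\widehat{x})|=|\widehat{x}|$ for every vertex. Apply $\Phi$ to $v_{\{x_0\}}$ for a fixed $x_0\in X$; this is a genuine vertex because on a discrete space the function vanishing only at $x_0$ is a nonzero non-unit. In $\widehat{G}_1$ it has degree $2^{1}-1=1$, hence $\Phi(v_{\{x_0\}})$ has degree $1$ in $\widehat{G}_2$; by the degree formula the only degree-$1$ vertices of $\widehat{G}_2$ are the $v_{X\setminus\{y\}}$, so $\Phi(v_{\{x_0\}})=v_{X\setminus\{y_0\}}$ for some $y_0\in X$. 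Comparing the sizes of these two classes, and using $|X\setminus\{x_0\}|=|X|$ since $X$ is infinite, gives
\[
2^{|X|}=c^{\,|X|}=|v_{\{x_0\}}|=\bigl|v_{X\setminus\{y_0\}}\bigr|=c^{\,1}=c,
\]
the first equality because $c^{|X|}=(2^{\aleph_0})^{|X|}=2^{|X|}$ for $|X|\ge\aleph_0$. Under CH, $c=\aleph_1$, and $X$ uncountable forces $|X|\ge\aleph_1$, whence $2^{|X|}\ge 2^{\aleph_1}>\aleph_1=c$ by Cantor's theorem --- contradicting $2^{|X|}=c$. Hence no graph isomorphism $G_1\to G_2$ exists.

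The cardinal arithmetic here is routine; the step I expect to carry the weight is the reduction to the labelled quotients, i.e.\ being certain that an isomorphism $G_1\to G_2$ must respect both ``$\thickapprox$'' and the fibre cardinalities --- this is precisely what Theorem \ref{to-5.4} delivers, so there is really nothing left to do once it is in hand. I would also remark that this yields a cardinal \emph{necessary} condition complementing the \emph{sufficient} condition ``$c^{|Z(f)|}=c^{|X\setminus Z(f)|}$ for all $f$'' of Theorem \ref{to-5.9}, and that CH is used only to force $2^{|X|}>c$ for uncountable discrete $X$; one should expect the statement to be genuinely sensitive to the failure of CH, since $2^{\aleph_0}=2^{\aleph_1}$ is consistent and would collapse the displayed contradiction.
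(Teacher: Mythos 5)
Your proof is correct, and it stays within the paper's framework (reduce to the quotient graphs via Theorem \ref{to-5.4}, start from a vertex with singleton zero-set, and derive a cardinality clash), but the invariant that delivers the contradiction is different. The paper uses \emph{two} invariants of an isomorphism $\psi$: first $|\widehat{f}|=|\widehat{\psi(f)}|$ to force $|X\setminus Z(\psi(f))|\geq c$, and then the neighbourhood cardinality $|[f]|=|[\psi(f)]|$ in the \emph{original} graphs, computing $|[f]|=c$ in $\Gamma(C(X))$ versus $|[\psi(f)]|\geq c^{c}>c$ in the comaximal graph. You instead exploit degrees in the \emph{quotient} graphs ($2^{|A|}-1$ versus $2^{|X\setminus A|}-1$) to pin down the image class exactly as $v_{X\setminus\{y_0\}}$, and then a single application of the fibre-cardinality invariant $c^{|X|}\neq c$ finishes the job. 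Your route is arguably cleaner in that it isolates precisely where the two quotient graphs fail to match up under a cardinality-preserving correspondence, and it makes transparent that only $2^{|X|}>c$ is needed (so CH could be weakened to $2^{\aleph_0}<2^{\aleph_1}$); the paper's route avoids any degree computation in $\widehat{G}_2$ at the cost of invoking the second invariant $|[x]|=|[\psi(x)]|$. Both arguments are sound.
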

\begin{proof}
	If possible let there exist a graph isomorphism $ \psi:G_1\rightarrow G_2 $. Choose an $ f\in C(X) $ with $ Z(f)=\{p\} $ for some point $ p $ in $ X $. Then by Theorem \ref{to-5.4}, $$ |\w{f}|=|\w{\psi(f)}|\  \text{and}\  |[f]|=|[\psi(f)]|, $$ the notations having their usual meaning as explained while defining the quotient graph $ \w{G} $ of a simple graph $ G $. The first relation implies in view of Remark \ref{to-5.8} that $ c^{X\setminus Z(f)}=c^{X\setminus Z(\psi(f))} $. Since $$ |X\setminus Z(f)|=|X\setminus \{p\}|=|X|\geq c\  \text{(by CH)},$$ it follows therefore that $ |X\setminus Z(\psi(f))|\geq c $ (the reason is that $c^{\aleph_0}=c$ and $ c^c=2^c>c $). To get the desired contradiction we shall show that $ |[f]|\neq|[\psi(f)]| $. Indeed we shall show that $ |[f]|<|[\psi(f)]| $. For that purpose we observe that $ [f]=\{g\in C(X):gf=0\}=\{g\in C(X): g(x)f(x)=0$ for each $x\in Z(f)$ and $g(x)=0$ for each $x\in X\setminus Z(f)\}=\{g\in C(X): g(x)=0$ for each $x\in X\setminus\{p\}\} $ and therefore $ |[f]|=c $. On the other hand, in the comaximal graph of $C(X) $ for any $ h\in C(X) $, $ h\in [\psi(f)] $ if and only if $ Z(h)\cap Z(\psi(f))=\emptyset $. Therefore $ [\psi(f)]=\{h\in C(X): h$ takes nonzero values at each point in $Z(\psi(f)) \} \geq c^{|X\setminus Z(\psi(f))|}\geq c^c$, as $ |X\setminus Z(\psi(f))|\geq c $, observed earlier. Thus we get $ |[\psi(f)]|\geq c^c>c =|[f]|$.
\end{proof}
Theorem \ref{to-5.10} and Theorem \ref{to-5.11} combined together yield the following proposition.
\begin{theorem}\label{to-5.12}
$ ( $CH $ ) $	A discrete topological space $ X $ is atmost countable if and only if the zero-divisor graph and the comaximal graph of $ C(X) $ are isomorphic.
\end{theorem}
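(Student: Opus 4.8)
The plan is simply to assemble the stated biconditional from the two one-directional results already in hand, keeping track of which half consumes the continuum hypothesis.

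For the ``if'' direction, suppose the discrete space $ X $ is atmost countable. Then Theorem \ref{to-5.10} applies verbatim and produces a graph isomorphism between the zero-divisor graph $ \gncx $ and the comaximal graph $ \gcx $; no appeal to CH is needed in this half, so this direction is immediate.

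For the converse I would argue by contraposition. Assume $ X $ is a discrete space which is \emph{not} atmost countable, i.e. $ |X| $ is uncountable. Working under CH, Theorem \ref{to-5.11} then guarantees that $ \gncx $ and $ \gcx $ are not isomorphic. Hence, if these two graphs \emph{are} isomorphic, $ X $ must be atmost countable. Combining the two implications yields precisely the asserted equivalence.

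There is no genuine obstacle at the level of the present statement: it is a formal consequence of Theorems \ref{to-5.10} and \ref{to-5.11}. The real weight of the argument sits inside Theorem \ref{to-5.11}, in the cardinal-arithmetic step that separates $ |[f]|=c $ from $ |[\psi(f)]|\geq c^{c}>c $ when $ Z(f) $ is a singleton; that is exactly where CH enters, through the inequality $ |X|\geq c $, and it is also the reason why the countability hypothesis in Theorem \ref{to-5.10} cannot be weakened.
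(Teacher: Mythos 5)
Your proposal is correct and is essentially identical to the paper's argument: the paper likewise obtains the equivalence by combining Theorem \ref{to-5.10} (the countable case gives an isomorphism) with Theorem \ref{to-5.11} (under CH the uncountable case forbids one), the latter applied contrapositively. Your added commentary on where CH actually enters is accurate but not part of the paper's (one-line) proof.
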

	A careful scrutiny into the proof of Theorem \ref{to-5.11} leads to the following result.
\begin{theorem}\label{to-5.14}
	$ ( $CH $ ) $ For an uncountable discrete topological space $ X $, the zero-divisor graph $ \gnccx $ is not isomorphic to the comaximal graph $ \gccx $ of the ring $ \ccx $.
\end{theorem}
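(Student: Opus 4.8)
The plan is to follow the proof of Theorem \ref{to-5.11} almost verbatim, the real work being to keep track of how the countable-range constraint affects the relevant cardinalities. First I would record the two structural facts that make the set-up identical to the $ C(X) $ case. On a discrete $ X $, an $ f\in\ccx $ is a non-unit exactly when $ Z(f)\neq\emptyset $, and it is a zero-divisor exactly when $ Z(f)\neq\emptyset $ (if $ p\in Z(f) $ then $ \chi_{\{p\}}\in\ccx $ annihilates $ f $); hence $ \gnccx $ and $ \gccx $ have the common vertex set $ V=\{f\in\ccx:f\neq0,\ Z(f)\neq\emptyset\} $. Second, by the reasoning of Theorem \ref{to-5.6} applied to $ \ccx $, for $ f\in V $ the class $ \w f $ in the zero-divisor quotient $ \w{G}_1 $ is $ \{g\in\ccx:Z(g)=Z(f)\} $, while for a vertex $ h $ of $ \gccx $ the class $ \w h $ in the comaximal quotient $ \w{G}_2 $ is $ \{g\in\ccx:Z(g)=Z(h)\} $. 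The only difference from the $ C(X) $ situation is numerical: for a nonempty $ Y\subseteq X $ the number of countable-range functions $ Y\to\mathbb R\setminus\{0\} $ is $ c $ when $ Y $ is finite and $ 2^{|Y|} $ when $ Y $ is infinite (the lower bound from $ \{1,2\} $-valued functions, the upper bound from the fact that a countable-range function is coded by a function $ Y\to\aleph_0 $ together with a countable labelling drawn from $ \mathbb R $).

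Now suppose, towards a contradiction, that $ \psi:\gnccx\to\gccx $ is a graph isomorphism. Pick $ p\in X $ and put $ f=\chi_{X\setminus\{p\}} $, so that $ Z(f)=\{p\} $ and $ X\setminus Z(f) $ is infinite of cardinality $ |X| $. Applying Theorem \ref{to-5.4} to $ \psi $ gives $ |\w f|=|\w{\psi(f)}| $ and $ |[f]|=|[\psi(f)]| $, the hats and brackets being formed in the respective graphs. By the description above, $ \w f $ is the set of countable-range functions on $ X $ vanishing exactly at $ p $, so $ |\w f|=2^{|X|} $; and, writing $ W:=Z(\psi(f)) $ (a nonempty proper subset of $ X $, since $ \psi(f) $ is a vertex of $ \gccx $), the class $ \w{\psi(f)} $ is the set of countable-range functions on $ X $ vanishing exactly on $ W $, so $ |\w{\psi(f)}| $ is $ c $ when $ X\setminus W $ is finite and $ 2^{|X\setminus W|} $ when it is infinite. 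Since $ X $ is uncountable, CH gives $ |X|\geq\aleph_1=c $, hence $ 2^{|X|}\geq2^{c}>c $; thus the equality $ 2^{|X|}=|\w f|=|\w{\psi(f)}|>c $ forces $ X\setminus W $ to be uncountable, with $ 2^{|X\setminus W|}=2^{|X|} $.

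Finally I would compare the two bracket sets directly. In $ \gnccx $, $ [f]=\{g\in\ccx:fg=0,\ g\neq0,\ g\neq f\} $; as $ f $ is nonzero off $ p $, every such $ g $ is supported on $ \{p\} $, so $ [f]=\{t\chi_{\{p\}}:t\in\mathbb R\setminus\{0\}\} $ and $ |[f]|=c $. In $ \gccx $, $ [\psi(f)]=\{h\in\ccx:Z(h)\cap W=\emptyset,\ Z(h)\neq\emptyset,\ h\neq\psi(f)\} $; taking $ h $ to be $ 1 $ on $ W $ and an arbitrary $ \{0,1\} $-valued function on $ X\setminus W $ that is not identically $ 1 $ already yields $ 2^{|X\setminus W|}=2^{|X|} $ distinct such vertices, so $ |[\psi(f)]|\geq2^{|X|}>c=|[f]| $. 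This contradicts $ |[f]|=|[\psi(f)]| $, so no such isomorphism exists.

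The one genuinely new point---hence the step to watch---is the cardinal arithmetic forced by the countable-range condition: one must confirm that $ |\w f| $, which in the $ C(X) $ setting was simply $ c^{|X\setminus Z(f)|} $, is now $ 2^{|X\setminus Z(f)|} $ when $ X\setminus Z(f) $ is infinite, and that it still strictly exceeds $ c $ under CH; and that the lower bound for $ |[\psi(f)]| $ survives the restriction, which it does because the witnessing functions may be taken $ \{0,1\} $-valued. Everything else is a line-by-line transcription of the proof of Theorem \ref{to-5.11}.
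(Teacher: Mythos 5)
Your proposal is correct and is essentially the paper's intended argument: the paper derives Theorem \ref{to-5.14} by ``a careful scrutiny'' of the proof of Theorem \ref{to-5.11}, and you have carried out exactly that adaptation, correctly replacing the class cardinal $c^{|X\setminus Z(f)|}$ by $2^{|X\setminus Z(f)|}$ in the countable-range setting and verifying that the decisive inequality $|[f]|=c<|[\psi(f)]|$ survives (the witnessing functions being $\{0,1\}$-valued). Nothing further is needed.
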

We conclude this section by combining all the above mentioned theorems.
\begin{theorem}\label{to-5.15}
	$ ( $CH $ ) $ For a discrete topological space $ X $, the following statements are equivalent:\begin{enumerate}
		\item The zero-divisor graph of $ C(X) $ is isomorphic to the comaximal graph of $ C(X) $.
		\item The zero-divisor graph of $ \ccx$ is isomorphic to the comaximal graph of $ \ccx $.
		\item $ X $ is atmost a countable set.
	\end{enumerate}
\end{theorem}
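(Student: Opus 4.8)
The plan is to assemble this theorem entirely from the results already established, splitting the three implications into the ``countable case'' and the ``uncountable case''. First I would record the elementary but crucial observation that whenever $X$ is an at most countable discrete space, every real-valued function on $X$ automatically has at most countable range, so $\ccx=\cx$; consequently in this case $\gnccx$ is literally the same graph as $\gncx$, and $\gccx$ is literally the same graph as $\gcx$. This identification is what lets the single construction underlying Theorem \ref{to-5.10} do double duty.

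For $(3)\Rightarrow(1)$ and $(3)\Rightarrow(2)$: assume $X$ is at most countable. Theorem \ref{to-5.10} already provides a graph isomorphism between the zero-divisor graph and the comaximal graph of $\cx$, which is exactly statement $(1)$; and by the identification $\ccx=\cx$ noted above, the very same isomorphism witnesses $(2)$. For $(1)\Rightarrow(3)$ I would argue contrapositively: if $X$ is an uncountable discrete space, then Theorem \ref{to-5.11} (which invokes CH) asserts that $\Gamma(C(X))$ and $\Gamma_2^{'}(C(X))$ are not isomorphic, so $(1)$ fails; equivalently one may simply cite Theorem \ref{to-5.12}, which is precisely $(1)\Leftrightarrow(3)$. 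For $(2)\Rightarrow(3)$, again contrapositively: if $X$ is an uncountable discrete space, Theorem \ref{to-5.14} (also under CH) asserts $\gnccx\not\cong\gccx$, so $(2)$ fails. These four implications close the loop and yield the equivalence of $(1)$, $(2)$, $(3)$.

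The only real point to be careful about is the bookkeeping concerning which ring each cited theorem refers to: Theorems \ref{to-5.10} and \ref{to-5.11} are stated for $C(X)$, whereas Theorem \ref{to-5.14} is the genuinely independent statement for $\ccx$ --- for uncountable $X$ one cannot read off the $\ccx$-version from the $C(X)$-version (or vice versa), precisely because $\ccx\subsetneq\cx$ there, which is why a separate argument (the proof of Theorem \ref{to-5.11} carried over, as indicated in Theorem \ref{to-5.14}) was needed. The CH hypothesis enters only through the two uncountable-case implications $(1)\Rightarrow(3)$ and $(2)\Rightarrow(3)$, exactly as flagged in Theorems \ref{to-5.11} and \ref{to-5.14}; the implications out of $(3)$ are CH-free. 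Since no new construction is required, there is no substantive obstacle here --- the theorem is a consolidation of Theorems \ref{to-5.10}, \ref{to-5.12} and \ref{to-5.14}, and the proof I would write is little more than citing them in the order above.
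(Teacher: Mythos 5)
Your proposal is correct and matches the paper's own treatment, which likewise obtains the theorem by consolidating Theorems \ref{to-5.10}, \ref{to-5.11} (equivalently \ref{to-5.12}) and \ref{to-5.14}, using the identification $\ccx=\cx$ for an at most countable discrete $X$ to transfer the isomorphism of Theorem \ref{to-5.10} to statement $(2)$. Your remark that the uncountable case for $\ccx$ genuinely requires the separate Theorem \ref{to-5.14} (since $\ccx\subsetneq\cx$ there) is exactly the right bookkeeping point.
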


\end{document}